\theoremstyle{plain}
\newtheorem{theorem}{Theorem}[subsection]
\newtheorem{lemma}[theorem]{Lemma}
\newtheorem{definition-theorem}[theorem]{Definition-Theorem}
\newtheorem{proposition}[theorem]{Proposition}
\newtheorem{corollary}[theorem]{Corollary}
\newtheorem{definition}[theorem]{Definition}
\newtheorem{example}[theorem]{Example}
\newtheorem{remark}[theorem]{Remark}
\newtheorem{conjecture}[theorem]{Conjecture}
\newtheorem{notation}[theorem]{Notation}
\newtheorem{assumption}[theorem]{Assumption}
\newtheorem*{maintheorem*}{Main Theorem}
\newcommand \bth[1] { \begin{theorem}\label{t#1} }
\newcommand \ble[1] { \begin{lemma}\label{l#1} }
\newcommand \bpr[1] { \begin{proposition}\label{p#1} }
\newcommand \bco[1] { \begin{corollary}\label{c#1} }
\newcommand \bde[1] { \begin{definition}\label{d#1}\rm }
\newcommand \bex[1] { \begin{example}\label{e#1}\rm }
\newcommand \bre[1] { \begin{remark}\label{r#1}\rm }
\newcommand \bcj[1] { \begin{conjecture}\label{j#1}\rm }
\newcommand \bnota[1] { \begin{notation}\label{n#1}\rm }
\renewcommand {\eth} { \end{theorem} }
\newcommand {\ele} { \end{lemma} }
\newcommand {\epr} { \end{proposition} }
\newcommand {\eco} { \end{corollary} }
\newcommand {\ede} { \end{definition} }
\newcommand {\eex} { \end{example} }
\newcommand {\ere} { \end{remark} }
\newcommand {\ecj} { \end{conjecture} }
\newcommand {\enota} { \end{notation} }
\newcommand \thref[1]{Theorem \ref{t#1}}
\newcommand \leref[1]{Lemma \ref{l#1}}
\newcommand \prref[1]{Proposition \ref{p#1}}
\newcommand \deref[1]{Definition \ref{d#1}}
\newsavebox{\@brx}
\newcommand{\llangle}[1][]{\savebox{\@brx}{\(\m@th{#1\langle}\)}%
  \mathopen{\copy\@brx\kern-0.5\wd\@brx\usebox{\@brx}}}
\newcommand{\rrangle}[1][]{\savebox{\@brx}{\(\m@th{#1\rangle}\)}%
  \mathclose{\copy\@brx\kern-0.5\wd\@brx\usebox{\@brx}}}
\DeclareMathOperator{\Ext}{Ext}
 \DeclareMathOperator{\Proj}{Proj}
\DeclareMathOperator{\Aut}{Aut}
 \DeclareMathOperator{\Hom}{Hom}
\DeclareMathOperator{\ThickId}{ThickId}
\DeclareMathOperator{\Mod}{{\sf Mod}}
\DeclareMathOperator{\modd}{{\sf mod}}
\DeclareMathOperator{\stmod}{{\sf stmod}}
\DeclareMathOperator{\StMod}{{ \sf StMod}}
\DeclareMathOperator{\Spc}{Spc}
\DeclareMathOperator{\ev}{ev}
\DeclareMathOperator{\coev}{coev}
\DeclareMathOperator{\For}{For}
\newcommand{\mf}{\mathfrak}
\newcommand{\mc}{\mathcal}
\newcommand{\id}{\operatorname{id}}
\newcommand{\kk}{\Bbbk}
\newcommand{\bT}{\mathbf T}
\newcommand{\bR}{\mathbf R}
\newcommand{\bS}{\mathbf S}
\newcommand{\bC}{\mathbf C}
\newcommand{\bK}{\mathbf K}
\newcommand{\bP}{\mathbf P}
\newcommand{\bQ}{\mathbf Q}
\newcommand{\bI}{\mathbf I}
\newcommand{\bJ}{\mathbf J}
\newcommand{\bD}{\mathbf D}
\newcommand{\CC}{\mathcal{C}}
\newcommand{\Loc}{\operatorname{Loc}}
\newcommand{\XX}{\mathcal X}
\newcommand{\VV}{\mathcal V}
\newcommand{\unit}{\ensuremath{\mathbf 1}}
\newcounter{listequation}
\newenvironment{eqnlist}{\begin{list}
{(\thesubsection.\thelistequation)}
{\usecounter{listequation} \setlength{\itemsep}{1.0ex plus.2ex
minus.1ex}}\setcounter{listequation}{\value{equation}}}
{\setcounter{equation}{\value{listequation}}\end{list}}
\numberwithin{equation}{subsection}
\begin{document}

\title{Noncommutative Tensor Triangular Geometry}

\author[Daniel K. Nakano]{Daniel K. Nakano}
\address{Department of Mathematics \\
University of Georgia \\
Athens, GA 30602\\
U.S.A.}
\thanks{Research of D.K.N. was supported in part by NSF grant DMS-1701768.}
\email{nakano@math.uga.edu}

\author[Kent B. Vashaw]{Kent B. Vashaw}
\address{
Department of Mathematics \\
Louisiana State University \\
Baton Rouge, LA 70803 \\
U.S.A.}
\thanks{Research of K.B.V. was supported by a Board of Regents LSU fellowship and in part by NSF grant DMS-1901830.}
\email{kvasha1@lsu.edu}
\author[Milen T. Yakimov]{Milen T. Yakimov}
\address{
Department of Mathematics \\
Northeastern University \\
Boston, MA 02115 \\
U.S.A.}
\thanks{Research of M.T.Y. was supported in part by NSF grant DMS-1901830 and DMS-2131243, and a Bulgarian Science Fund grant DN02/05.}
\email{m.yakimov@northeastern.edu}
\begin{abstract}
We develop a general noncommutative version of Balmer's tensor triangular geometry that 
is applicable to arbitrary monoidal triangulated categories (M$\Delta$Cs). Insight 
from noncommutative ring theory is used to obtain a framework for prime, semiprime, and 
completely prime (thick) ideals of an M$\Delta$C, $\bK$, and then to associate to $\bK$ a topological space--the 
Balmer spectrum $\Spc \bK$. We develop a general framework 
for (noncommutative) support data, coming in three different flavors, and show that $\Spc \bK$ 
is a universal terminal object for the first two notions (support and weak support). The first two types of support data 
are then used in a theorem that gives a method for the explicit classification of the thick (two-sided) ideals and the Balmer 
spectrum of an M$\Delta$C. The third type (quasi support) is used in another theorem that provides a method for the 
explicit classification of the thick right ideals of $\bK$, which in turn can be applied to classify the 
thick two-sided ideals and $\Spc \bK$. 

As a special case, our approach can be applied to the stable module categories of arbitrary finite
dimensional Hopf algebras that are not necessarily cocommutative (or quasitriangular). We illustrate 
the general theorems with classifications of the Balmer spectra and 
thick two-sided/right ideals for the stable module categories of all small quantum 
groups for Borel subalgebras, and classifications of the Balmer spectra and thick two-sided ideals of Hopf algebras studied by Benson and Witherspoon.
\end{abstract}
\maketitle
\section{Introduction}
\label{Intro}
\subsection{From the commutative to the noncommutative settings} 
Balmer's tensor triangular geometry \cite{Balmer1,Balmer2} provides a powerful method for addressing problems in representation theory, 
algebraic geometry, commutative algebra, homotopy theory and algebraic $K$-theory from one common perspective. 
In representation theory alone this covers modular representations, representations of finite group schemes, 
supergroups, quasitriangular quantum groups at roots of unity and others. 

The general setting of \cite{Balmer1,Balmer2} is that of a triangulated category $\bK$ with a biexact monoidal structure that is symmetric 
(or more generally braided). The key ingredients of Balmer's theory are:
\begin{enumerate}
\item[$\bullet$] A construction of a topological space $\Spc \bK$ consisting of prime thick ideals of $\bK$ (that is upgraded to a ringed space) 
and a characterization theorem that it is the universal final object for support maps on objects of $\bK$; 
\vskip .1cm 
\item[$\bullet$]  Methods for the explicit description of $\Spc \bK$ as a topological space via (cohomological) support data for $\bK$.  
\end{enumerate}

In this paper we develop general noncommutative versions of these ingredients of Balmer's theory that deal with an arbitrary monoidal triangulated 
category $\bK$ (M$\Delta$C for short)--a triangulated category $\bK$ with a biexact monoidal structure. Noncommutative versions of Balmer's theory 
were sought after before, because there are many important M$\Delta$Cs (e.g., the stable module categories of finite-dimensional Hopf algebras 
which are not cocommutative). However, for various reasons, a general noncommutative version of  tensor triangular geometry 
has not been fully developed. The key new ideas of our constructions are:
\begin{enumerate}
\item[$\bullet$] Previous considerations of cohomological support maps in the noncommutative setting \cite{BW1,PW} focused on the fact that they do not 
satisfy the usual axioms for commutative support data from \cite{Balmer1}. These axioms are object-wise conditions for $\bK$ and  
mimic the treatment of completely prime ideals in a noncommutative ring. Such ideals are too few in general.
The novel feature of our approach is to define the noncommutative Balmer spectrum $\Spc \bK$ and support data for $\bK$ in terms of 
tensoring of thick ideals of $\bK$, and not object-wise tensoring.
\vskip .1cm
\item[$\bullet$] In noncommutative ring theory, the prime spectrum of a ring is very hard to describe as 
a topological space with the primary example being that of the spectra of universal enveloping algebras of Lie algebras \cite{Dixmier}
and quantum groups \cite{Joseph}.
In the categorical setting, we present a method for computing the Balmer spectrum $\Spc \bK$ that appears to be as applicable as its
commutative counterparts. 
\vskip .1cm 
\item[$\bullet$] The set of right ideals of a noncommutative ring are rarely classifiable with the exceptions of very few rings. 
Surprisingly, in the categorical setting, we are successful in developing a method for the explicit classification of the thick right ideals of an M$\Delta$C.
\end{enumerate}
\subsection{Different types of prime ideals of an M$\Delta$C and its Balmer spectrum} We define below the various notions of 
primeness used throughout the paper. 
\begin{enumerate}
\item [(i)] 
A {\it {thick (two-sided) ideal}} of an M$\Delta$C, $\bK$, 
is a full triangulated subcategory of $\bK$ that contains all direct summands of its objects and is closed under left and right
tensoring with arbitrary objects of $\bK$; denote by $\ThickId(\bK)$ the set of those; 
\item[(ii)] A {\it {prime}} ideal of $\bK$ is a proper thick ideal $\bP$ such that $\bI \otimes \bJ \subseteq \bP \Rightarrow \bI \subseteq \bP$ or $\bJ \subseteq \bP$ 
for all thick ideals $\bI$ and $\bJ$ of $\bK$.  
\item[(iii)] A {\it {semiprime}} ideal of $\bK$ is an intersection of prime ideals of $\bK$.
\item[(iv)] A {\it {completely prime ideal}} of $\bK$ is a proper thick ideal $\bP$ such that $A \otimes B \in \bP \Rightarrow A \in \bP$ or $B \in \bP$ 
for all $A, B \in \bK$.   
\item[(v)] The {\it {noncommutative Balmer spectrum}} of $\bK$ is the topological space of prime ideals of $\bK$ with the Zariski topology having closed sets 
\[
V(\mc{S})=\{ \bP \in \Spc \bK | \bP \cap \mc{S}= \varnothing \}
\]
for all subsets $\mc{S}$ of $\bK$.
\end{enumerate}
\begin{theorem} \label{T:theoremA} The following hold for an M$\Delta$C, $\bK$:
\begin{enumerate}
\item[(a)] A proper thick ideal $\bP$ of $\bK$ is prime if and only if $A \otimes C \otimes B \in \bP$, for all $C \in \bK \Rightarrow A \in \bP$ or $B \in \bP$ for all $A, B \in \bK$.  
\item[(b)] A proper thick ideal $\bP$ of $\bK$ is semiprime if and only if $A \otimes C \otimes A \in \bP$, for all $C \in \bK \Rightarrow A \in \bP$ for all $A \in \bP$.
\item[(c)] The Balmer spectrum $\Spc \bK$ is always nonempty.
\end{enumerate}
\end{theorem}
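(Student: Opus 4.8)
The plan is to transfer the classical noncommutative ring theory arguments --- prime and semiprime ideals, the prime radical, $m$-systems --- to the monoidal triangulated setting, systematically replacing products of ring elements by tensor products together with cones and direct summands, and ``ideal generated by a set'' by ``thick ideal generated by a set'', written $\langle-\rangle$. Two routine facts will be used throughout: (1) if a property $\mc{P}$ of objects of $\bK$ is stable under shifts, under cones of morphisms between objects satisfying $\mc{P}$, under direct summands, and under left and right tensoring by arbitrary objects, then $\{A\in\bK : \mc{P}(A)\}$ is a thick ideal (closure under cones uses biexactness of $\otimes$); and (2) for thick ideals $\bI,\bJ$, the thick ideal $\langle\bI\cup\bJ\rangle$ is the collection of direct summands of objects $W$ admitting a distinguished triangle $I\to W\to J\to I[1]$ with $I\in\bI$ and $J\in\bJ$. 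I also take $\bK$ to be nonzero (equivalently $\unit\not\cong 0$), since otherwise $\Spc\bK=\varnothing$ and (c) would be false.

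For (a), the implication $\Leftarrow$ is immediate: if $\bI\otimes\bJ\subseteq\bP$ and $A\in\bI\setminus\bP$, then for every $B\in\bJ$ and every $C\in\bK$ we have $A\otimes C\in\bI$, so $A\otimes C\otimes B\in\bI\otimes\bJ\subseteq\bP$, forcing $B\in\bP$; hence $\bJ\subseteq\bP$. For $\Rightarrow$, suppose $A\otimes C\otimes B\in\bP$ for all $C$. By fact (1), $\mc{L}:=\{X : X\otimes C\otimes B\in\bP\text{ for all }C\}$ is a thick ideal containing $A$, so $\langle A\rangle\subseteq\mc{L}$; and for each fixed $X\in\langle A\rangle$, $\mc{R}_X:=\{Y : X\otimes C\otimes Y\in\bP\text{ for all }C\}$ is a thick ideal containing $B$, so $\langle B\rangle\subseteq\mc{R}_X$. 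Specializing $C=\unit$ yields $X\otimes Y\in\bP$ for all $X\in\langle A\rangle$ and $Y\in\langle B\rangle$, that is $\langle A\rangle\otimes\langle B\rangle\subseteq\bP$; primeness of $\bP$ then forces $A\in\langle A\rangle\subseteq\bP$ or $B\in\langle B\rangle\subseteq\bP$.

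For (b), the direction $\Rightarrow$ is formal: writing $\bP=\bigcap_i\bP_i$ with each $\bP_i$ prime, if $A\otimes C\otimes A\in\bP\subseteq\bP_i$ for all $C$ then (a) gives $A\in\bP_i$ for every $i$, hence $A\in\bP$. The direction $\Leftarrow$ is the crux, and it reduces to producing, for each $A\notin\bP$, a prime $\bQ_A\supseteq\bP$ with $A\notin\bQ_A$ (so that $\bP=\bigcap_{A\notin\bP}\bQ_A$, an intersection of primes). Mimicking the $m$-system construction, put $A_0=A$ and, given $A_i\notin\bP$, use the contrapositive of the hypothesis to choose $C_i\in\bK$ with $A_{i+1}:=A_i\otimes C_i\otimes A_i\notin\bP$; set $\mc{S}=\{A_0,A_1,\dots\}$, which is disjoint from $\bP$. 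By Zorn's lemma, pick a thick ideal $\bQ$ maximal among those containing $\bP$ and disjoint from $\mc{S}$. If $\bQ$ were not prime, (a) furnishes $X,Y\notin\bQ$ with $X\otimes C\otimes Y\in\bQ$ for all $C$, whence, exactly as in the forward implication of (a), $X'\otimes C\otimes Y'\in\bQ$ for all $X'\in\langle X\rangle$, $Y'\in\langle Y\rangle$ and all $C$. By maximality of $\bQ$, both $\langle\bQ\cup\{X\}\rangle$ and $\langle\bQ\cup\{Y\}\rangle$ must meet $\mc{S}$; since $\mc{S}$ is closed under $A_i\mapsto A_i\otimes C_i\otimes A_i$ and both of these ideals are tensor-closed, some single $A_N$ lies in both. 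By fact (2), $A_N$ is a direct summand of an object $W$ fitting in a triangle $Q\to W\to I\to Q[1]$ with $Q\in\bQ$, $I\in\langle X\rangle$, and also a direct summand of $W'$ fitting in $Q'\to W'\to I'\to Q'[1]$ with $Q'\in\bQ$, $I'\in\langle Y\rangle$; hence $A_{N+1}=A_N\otimes C_N\otimes A_N$ is a direct summand of $W\otimes C_N\otimes W'$. Tensoring the first triangle on the right by $C_N\otimes W'$ and the second on the left by $I\otimes C_N$ exhibits $W\otimes C_N\otimes W'$ as built by two cones out of objects of $\bQ$ (the ``$Q$-terms'' because $\bQ$ is an ideal, and $I\otimes C_N\otimes I'$ by the containment just established), so $A_{N+1}\in\bQ$ --- contradicting $A_{N+1}\in\mc{S}$. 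Therefore $\bQ$ is prime and $A=A_0\notin\bQ$, as required.

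Part (c) will follow from (a): by Zorn's lemma there is a thick ideal $\bM$ maximal among proper thick ideals (the union of a chain of proper thick ideals is again proper, since none contains $\unit$, and $0$ is a proper thick ideal because $\bK\neq 0$). If $\bM$ were not prime, (a) produces $X,Y\notin\bM$ with $X\otimes C\otimes Y\in\bM$ for all $C$; then maximality forces $\langle\bM\cup\{X\}\rangle=\langle\bM\cup\{Y\}\rangle=\bK$, and the summand-and-cone argument of (b), run with $\unit=\unit\otimes\unit$, gives $\unit\in\bM$, contradicting properness. Hence $\bM\in\Spc\bK$, which is therefore nonempty. The main obstacle is the direction $\Leftarrow$ of (b): conceptually it is just the classical fact that a semiprime ideal is an intersection of primes, but the real work lies in porting the $m$-system machinery faithfully --- controlling the generated ideal $\langle\bQ\cup\{X\}\rangle$ via fact (2) and verifying, through biexactness of $\otimes$, that membership in $\bQ$ propagates through all the relevant cones and direct summands. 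The auxiliary thick ideals $\mc{L}$ and $\mc{R}_X$ constructed in the forward implication of (a) are the reusable engine behind all three parts.
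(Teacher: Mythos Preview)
Your overall strategy is the same as the paper's: transfer the ring-theoretic characterizations of prime and semiprime ideals to the monoidal triangulated setting, with the auxiliary-ideal trick (your $\mc{L}$ and $\mc{R}_X$) playing the role of the key technical lemma. The paper isolates this as the single inclusion
\[
\langle \mc{M}\rangle \otimes \langle \mc{N}\rangle \subseteq \langle \mc{M}\otimes \bK\otimes \mc{N}\rangle
\]
for arbitrary sets of objects $\mc{M},\mc{N}$, proved exactly by your $\mc{L}$/$\mc{R}_X$ method, and then uses it uniformly in all three parts.

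The one genuine gap is your ``fact (2)''. The claim that $\langle \bI\cup\bJ\rangle$ equals the summands of objects sitting in a single triangle $I\to W\to J$ with $I\in\bI$, $J\in\bJ$ is not routine: for general thick subcategories the join requires iterated $*$-products $\bI*\bJ*\bI*\bJ*\cdots$ (cf.\ Rouquier's dimension theory), and the ideal property does not obviously collapse this filtration. Your subsequent triangle manipulation is correct \emph{given} fact (2), but you have not justified it, and I do not believe it holds as stated.

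The fix is already in your hands and is exactly what the paper does. In part (b), once you know $X\otimes C\otimes Y\in\bQ$ for all $C$, rerun your $\mc{L}/\mc{R}_X$ argument with the larger sets: the thick ideal $\{Z: Z\otimes C\otimes Y'\in\bQ \text{ for all } C\in\bK,\; Y'\in\langle \bQ\cup\{Y\}\rangle\}$ contains both $\bQ$ (ideal property) and $X$ (by the $\mc{R}_X$ step applied to $\bQ\cup\{Y\}$), hence contains $\langle \bQ\cup\{X\}\rangle$. This gives $Z\otimes C\otimes Y'\in\bQ$ for all $Z\in\langle\bQ\cup\{X\}\rangle$, $Y'\in\langle\bQ\cup\{Y\}\rangle$, $C\in\bK$; taking $Z=Y'=A_N$ and $C=C_N$ yields $A_{N+1}\in\bQ$, the desired contradiction, with no appeal to fact (2). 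The same repair works verbatim in your argument for (c), giving $\unit=\unit\otimes\unit\otimes\unit\in\bM$. The paper's route to (c) is slightly slicker: it observes directly that any thick ideals $\bI,\bJ$ properly containing a maximal proper $\bM$ must equal $\bK$, so $\unit\otimes\unit\in\bI\otimes\bJ$ is not in $\bM$, whence $\bM$ is prime by the characterization in (a).
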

The ideal-theoretic definitions of prime and semiprime ideals of an M$\Delta$C and its noncommutative Balmer spectrum were introduced by 
Buan, Krause and Solberg in \cite{BKS}. The equivalent formulations of these objects in terms of object-theoretic conditions given by Theorem \ref{T:theoremA}
will play a key role in this paper in the definitions of support data maps of various types and theorems for the effective reconstruction of the 
noncommutative Balmer spectrum of an M$\Delta$C.
\subsection{Support data and universality of the noncommutative Balmer spectrum} 
\label{supp}
For a topological space $X$, let $\mc{X}$, $\mc{X}_{cl}$, and $\mc{X}_{sp}$ denote the collection of all of its subsets, closed subsets and 
specialization closed subsets, respectively (see Section \ref{noncomm-supp}). The three different kinds of noncommutative support data for an M$\Delta$C, $\bK$, 
will be maps 
\[
\sigma : \bK \to \mc{X}
\]
that respect the triangulated structure:
\begin{enumerate}
\item[(i)] $\sigma(0)=\varnothing $ and $ \sigma(1)= X$;
\item[(ii)] $\sigma(A\oplus B)=\sigma(A)\cup \sigma(B)$, $\forall A, B \in \bK$; 
\item[(iii)] $\sigma(\Sigma A)=\sigma(A)$, $\forall A \in \bK$; 
\item[(iv)] If $A \to B \to C \to \Sigma A$ is a distinguished triangle, then $\sigma(A) \subseteq \sigma(B) \cup \sigma(C)$.
\end{enumerate}
\smallskip
In this paper, each kind of support datum will satisfy one additional condition related to the monoidal structure. We present an overview for 
each type below. For all of them the key role will be played by their extensions
$\Phi_\sigma$ to the sets of thick ideals $\ThickId(\bK)$ (and thick right ideals of $\bK$), given by  
\[
\Phi_\sigma(\bI) = \bigcup_{A \in \bI} \sigma(A).
\]
The strictest notion, that of {\it{support datum}}, is a map $\sigma : \bK \to \mc{X}$ satisfying the additional assumption
\smallskip
\begin{enumerate}
\item[(v)] $\bigcup_{C \in \bK} \sigma (A   \otimes C \otimes B) = \sigma (A) \cap \sigma(B)$, $\forall A, B \in \bK$. 
\end{enumerate}
\smallskip
The map $V$ defining the Zariski topology of $\Spc \bK$ is an example of a support datum, and the condition (v) is nothing but a restatement 
of the characterization of a prime ideal in Theorem~\ref{T:theoremA}(a). 

A {\it{weak support datum}} is a map $\sigma : \bK \to \mc{X}$ that  (in place of (v)) satisfies the property
\smallskip
\begin{enumerate}
\item[(v')] $\Phi_\sigma(\bI \otimes \bJ) = \Phi_\sigma(\bI) \cap \Phi_\sigma(\bJ)$ for all thick ideals $\bI$ and $\bJ$ of $\bK$.
\end{enumerate}
\smallskip
Each support datum is a weak support datum.

At various times in the paper, for each type of support datum (resp. weak support datum, and quasi support datum), we make a minor modification in 
the definition by replacing (ii) with 
\smallskip
\begin{enumerate} 
\item[(ii')] $\sigma(\bigoplus_{i\in I} A_i)=\bigcup_{i \in I} \sigma(A_i )$, $\forall A_i \in \bK$
\end{enumerate} 
\smallskip
for M$\Delta$Cs, $\bK$ admitting arbitrary set indexed coproducts.
With this replacement, we will use the term {\em{extended support datum}} (resp. {\em{extended weak support datum}}, and {\em{extended quasi support datum}}). 

\begin{theorem} \label{T:theoremB}
Let $\bK$ be an M$\Delta$C. 
\begin{enumerate}
\item[(a)] The support $V$ is the final object in the collection of support data $\sigma$ such that $\sigma(A)$ is closed for each $A \in \bK$: 
for any such $\sigma$, there is a unique continuous map $f_\sigma: X \to \Spc \bK$ satisfying 
\[
\sigma(A)= f_\sigma^{-1}( V(A)) \quad
\mbox{for} 
\quad A \in \bK.
\]
\item[(b)] The support $V$ is the final object in the collection of weak support datum $\sigma$ such that $\Phi_{\sigma}(\langle A \rangle)$ is closed for each $A \in \bK$: 
for any such $\sigma$, there is a unique continuous map $f_\sigma: X \to \Spc \bK$ satisfying 
\[
\Phi_{\sigma}(\langle A \rangle)= f_\sigma^{-1}(V(A)) \quad
\mbox{for} 
\quad A \in \bK,
\]
where $\langle A \rangle$ denotes the smallest thick ideal of $\bK$ containing $A$. 
\end{enumerate}
\end{theorem}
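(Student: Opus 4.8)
The plan is to prove both parts of Theorem~\ref{T:theoremB} by the same strategy as in Balmer's original argument, adapted to the noncommutative setting via the ideal-theoretic reformulations of primeness in Theorem~\ref{T:theoremA}. First I would construct the candidate map $f_\sigma \colon X \to \Spc \bK$. Given a point $x \in X$, set
\[
f_\sigma(x) = \{ A \in \bK \mid x \notin \sigma(A) \}
\]
in case (a), and $f_\sigma(x) = \{ A \in \bK \mid x \notin \Phi_\sigma(\langle A\rangle)\}$ in case (b). I would first verify that $f_\sigma(x)$ is a thick ideal: axioms (i)--(iv) for $\sigma$ give closure under shifts, summands, and the two-out-of-three property for triangles, and axiom (v) (resp.\ (v$'$)) forces closure under left/right tensoring, since $x \notin \sigma(A)$ implies $x \notin \sigma(A) \cap \sigma(B) = \bigcup_{C}\sigma(A\otimes C\otimes B)$, hence in particular $x \notin \sigma(A \otimes C \otimes B)$ for the relevant $C$. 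Properness follows from $\sigma(1) = X \ni x$.

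The heart of the argument is showing $f_\sigma(x)$ is \emph{prime}. Here I would invoke Theorem~\ref{T:theoremA}(a): I must show that if $A \otimes C \otimes B \in f_\sigma(x)$ for all $C \in \bK$, then $A \in f_\sigma(x)$ or $B \in f_\sigma(x)$. Translating through $\sigma$, the hypothesis says $x \notin \sigma(A \otimes C \otimes B)$ for all $C$, i.e.\ $x \notin \bigcup_C \sigma(A\otimes C\otimes B) = \sigma(A)\cap\sigma(B)$ by axiom~(v), so $x\notin\sigma(A)$ or $x\notin\sigma(B)$, which is exactly the conclusion. In case (b) one runs the same computation with $\Phi_\sigma$ in place of $\sigma$, using (v$'$) applied to the principal ideals $\langle A\rangle$ and $\langle B\rangle$, together with the identity $\Phi_\sigma(\langle A\rangle) \cap \Phi_\sigma(\langle B\rangle) = \Phi_\sigma(\langle A\rangle \otimes \langle B\rangle)$ and the fact that $A\otimes C\otimes B$ ranges through (a generating set of) $\langle A\rangle\otimes\langle B\rangle$. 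Continuity of $f_\sigma$ is then immediate from the defining relation: $f_\sigma^{-1}(V(\mc S)) = \bigcap_{A\in\mc S} f_\sigma^{-1}(V(A)) = \bigcap_{A\in\mc S}\{x : A\notin f_\sigma(x)\}$, and $A \notin f_\sigma(x) \iff x \in \sigma(A)$ (resp.\ $x\in\Phi_\sigma(\langle A\rangle)$), which is closed by hypothesis; so preimages of closed sets are closed. This simultaneously establishes the required identity $\sigma(A) = f_\sigma^{-1}(V(A))$ (resp.\ $\Phi_\sigma(\langle A\rangle) = f_\sigma^{-1}(V(A))$).

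For uniqueness, suppose $g \colon X \to \Spc\bK$ is another continuous map with the same property. Then for every $x \in X$ and every $A \in \bK$ we have $A \in g(x) \iff g(x) \in V(A)^c \iff x \notin g^{-1}(V(A)) \iff x \notin \sigma(A) \iff A \in f_\sigma(x)$ (using that $V(A) = \{\bP : A\notin\bP\}$ so its complement is $\{\bP : A\in\bP\}$), hence $g(x) = f_\sigma(x)$. The same computation works in case (b) with $\Phi_\sigma(\langle A\rangle)$.

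The main obstacle I anticipate is the verification in part~(b) that $f_\sigma(x)$ is closed under tensoring and is prime, because there one cannot argue object-wise but must track the behavior of $\Phi_\sigma$ on \emph{products of thick ideals}. Concretely, I need the compatibility $\langle A \rangle \otimes \langle B\rangle = \langle A \otimes C \otimes B \mid C \in \bK\rangle$ (or at least that the left side's union under $\sigma$ equals that of the right side), and I need $\Phi_\sigma$ to interact correctly with the passage from an object to the principal ideal it generates --- i.e.\ that $x \notin \Phi_\sigma(\langle A\rangle)$ can be detected and propagated through the generation process. This is where axioms (i)--(iv), which guarantee $\sigma$ is constant on the thick subcategory generated by an object up to the monoidal twists, must be combined carefully with (v$'$); it is the one place the noncommutativity genuinely complicates Balmer's original bookkeeping.
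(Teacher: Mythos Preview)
Your proposal is correct and follows essentially the same route as the paper: define $f_\sigma(x)$ as the set of objects whose support misses $x$, verify it is a prime thick ideal using the axioms, and read off continuity and uniqueness from the defining relation. Part~(a) matches the paper almost verbatim.

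Two points of comparison for part~(b) are worth noting. First, you plan to verify primeness via the object-wise criterion of Theorem~\ref{T:theoremA}(a), which works but requires the identity $\bigcup_{C}\Phi_\sigma(\langle A\otimes C\otimes B\rangle)=\Phi_\sigma(\langle A\rangle\otimes\langle B\rangle)$ (this follows from \leref{MTN} together with the observation that the principal ideals $\langle A\otimes C\otimes B\rangle$ form a directed system under $C\mapsto C\oplus C'$). The paper instead checks the ideal-theoretic definition of primeness directly, which is slightly cleaner here since axiom~(v$'$) is already stated at the level of ideals. Second, you assert that axioms (i)--(iv) immediately give the two-out-of-three property for $f_\sigma(x)$, but in case~(b) this requires $\Phi_\sigma(\langle A\rangle)\subseteq\Phi_\sigma(\langle B\rangle)\cup\Phi_\sigma(\langle C\rangle)$ for a distinguished triangle $A\to B\to C\to\Sigma A$, which does \emph{not} follow pointwise from $\sigma(A)\subseteq\sigma(B)\cup\sigma(C)$. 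The paper isolates this as a separate lemma (\leref{dist-triang-weak}), proved by showing that the collection of $M\in\langle A\rangle$ with $\Phi_\sigma(\bK\otimes M\otimes\bK)\subseteq\Phi_\sigma(\langle B\rangle)\cup\Phi_\sigma(\langle C\rangle)$ is itself a thick ideal containing $A$. Your final paragraph correctly anticipates that such propagation-through-generation arguments are needed, but you locate the difficulty only in the tensor and primeness steps rather than in the triangle step as well.
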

A version of the second part of this theorem was obtained in \cite[Theorem 4.2]{BKS}.

\subsection{Classification methods for the noncommutative Balmer spectra} 
\label{class-Balmer-spec}
In the opposite direction, the different types of support data can be used for explicit descriptions of the noncommutative Balmer spectra of M$\Delta$Cs 
as topological spaces. 

We refer the reader to Sections \ref{tr-cat} and \ref{SS:supportdata} for background on compactly generated M$\Delta$Cs, $\bK$, and their compact parts 
$\bK^c$. 
For $\mc{S} \subseteq \bK^c$, we will denote by $\langle \mc{S} \rangle$ the smallest thick ideal of $\bK^c$ containing $\mc{S}$. 

\begin{theorem} \label{T:theoremC} Let $\bK$ be a compactly generated M$\Delta$C and $\sigma : \bK \to \XX$ be an extended weak support datum
for a Zariski space $X$ such that $\Phi(\langle C \rangle)$ is closed for every compact object $C$. Assume that $\sigma$ satisfies the faithfulness property \eqref{EE:supportone} and the 
realization property \eqref{EE:supporttwo}. 
\begin{enumerate}
\item[(a)] The map 
\[
\Phi_\sigma : \ThickId(\bK^c) \to \mc{X}_{sp}
\]
is an isomorphism of ordered monoids, where the set of thick ideals of $\bK^c$ is equipped with the operation $\bI, \bJ \mapsto \langle \bI \otimes \bJ \rangle$ and the inclusion partial order, 
and $\mc{X}_{sp}$ is equipped with the operation of intersection and the inclusion partial order.
\item[(b)] The universal map $f_\sigma: X \to \Spc \bK^c$ from Theorem~\ref{T:theoremB}(b), given by 
\[
f_\sigma(x)= \{ A \in \bK^c : x \not \in \Phi_{\sigma} (\langle A \rangle) \}
\quad
\mbox{for} 
\quad x \in X,
\]
is a homeomorphism.
\end{enumerate}
\end{theorem}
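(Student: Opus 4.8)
The plan is to combine the universality statement of Theorem~\ref{T:theoremB}(b) with the two extra hypotheses -- the faithfulness property \eqref{EE:supportone} and the realization property \eqref{EE:supporttwo} -- to show that $\Phi_\sigma$ is both injective and surjective onto $\mc{X}_{sp}$, and then to deduce part (b) formally from part (a). For part (a), first I would check that $\Phi_\sigma$ does take values in $\mc{X}_{sp}$: since $\sigma(A)$ is built from closed sets $\Phi_\sigma(\langle C\rangle)$ (or directly from closedness of these on compacts), an arbitrary union of such sets is specialization closed, so $\Phi_\sigma(\bI) = \bigcup_{A\in\bI}\sigma(A)$ lies in $\mc{X}_{sp}$. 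Next I would verify that $\Phi_\sigma$ is a monoid homomorphism for the indicated operations: the identity $\Phi_\sigma(\langle \bI\otimes\bJ\rangle) = \Phi_\sigma(\bI)\cap\Phi_\sigma(\bJ)$ follows from the weak support datum axiom (v$'$) together with the observation that $\Phi_\sigma(\langle \bL\rangle) = \Phi_\sigma(\bL)$ for a thick ideal $\bL$ (both sides are unions of $\sigma$ over the ideal, and $\langle -\rangle$ only enlarges by triangles, summands and tensoring, none of which increase the $\sigma$-union by axioms (ii)--(iv) and (v$'$)). Monotonicity in both directions is immediate from the union formula and from the characterization of $\langle A\rangle$ via $\sigma$ below.

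The heart of part (a) is the bijectivity of $\Phi_\sigma$. \emph{Injectivity:} I would show that every thick ideal $\bI$ of $\bK^c$ is recovered from its image by
\[
\bI = \{ A \in \bK^c : \sigma(A) \subseteq \Phi_\sigma(\bI)\}.
\]
The inclusion $\subseteq$ is clear. For $\supseteq$, given $A$ with $\sigma(A)\subseteq\Phi_\sigma(\bI)$, the realization property \eqref{EE:supporttwo} should let me write the specialization closed set $\sigma(A)$ (or each closed piece of it) as $\Phi_\sigma(\langle B\rangle)$ for suitable compact $B$ lying in $\bI$, and then the faithfulness property \eqref{EE:supportone} -- that $\sigma(A)\subseteq\sigma(B)$ forces $A \in \langle B\rangle$ -- puts $A$ in $\bI$. (This is exactly the standard Balmer/Benson--Carlson--Rickard style argument, now carried out ideal-wise rather than object-wise, which is why the noncommutativity causes no trouble: we never need a tensor-product formula on objects.) \emph{Surjectivity:} given $W\in\mc{X}_{sp}$, write $W$ as a union of closed sets $W_i$; by \eqref{EE:supporttwo} each $W_i = \Phi_\sigma(\langle C_i\rangle)$, and then the thick ideal $\bI_W = \langle C_i : i\rangle$ generated by all the $C_i$ has $\Phi_\sigma(\bI_W) = \bigcup_i \Phi_\sigma(\langle C_i\rangle) = W$, using the union formula and (v$'$)/monotonicity to see no larger sets enter. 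That $\Phi_\sigma$ and $\bI\mapsto\bI_{(-)}$ are mutually inverse, and order-preserving both ways, then finishes (a).

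For part (b), I would argue as follows. By Theorem~\ref{T:theoremB}(b), $f_\sigma : X \to \Spc\bK^c$ is continuous with $\Phi_\sigma(\langle A\rangle) = f_\sigma^{-1}(V(A))$. To see $f_\sigma$ is a bijection, note that the prime ideals of $\bK^c$ correspond under the isomorphism of part (a) to the meet-irreducible elements of $\mc{X}_{sp}$ (an ideal $\bP$ is prime iff, via Theorem~\ref{T:theoremA}(a) recast through $\Phi_\sigma$, its image cannot be written as an intersection of two strictly larger specialization closed sets), and for a Zariski (spectral, hence sober) space $X$ the meet-irreducible specialization closed sets are exactly the complements of the generic-point-free... more precisely the irreducible closed sets complement to points, so they are in bijection with the points of $X$ via $x\mapsto X\setminus\overline{\{x\}}$... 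I would instead directly check that $x \mapsto f_\sigma(x) = \{A : x\notin\Phi_\sigma(\langle A\rangle)\}$ is inverse to the map sending a prime $\bP$ to the unique point whose closure is the irreducible closed set $X\setminus\Phi_\sigma(\bP)$, using sobriety of the Zariski space $X$ for existence and uniqueness of that point. Continuity of $f_\sigma$ is given; for the inverse, $f_\sigma(V(\mc{S})^{\,c})$-type computations show $f_\sigma$ is closed (it carries $\overline{\{x\}}$ to $V(\text{stuff})$ compatibly with part (a)), so $f_\sigma$ is a homeomorphism.

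The step I expect to be the main obstacle is the surjectivity/realization part of (a): making the realization property \eqref{EE:supporttwo} interact cleanly with arbitrary specialization closed sets (which are unions of closed sets indexed by a possibly infinite set) and confirming that the thick ideal generated by the realizing compact objects has image exactly $W$ and not something larger -- this is where one must be careful that the passage $\langle - \rangle$ to the generated ideal does not enlarge the $\sigma$-image, and where the extended (coproduct) version of the axioms and compact generation of $\bK$ are genuinely used. Establishing the precise dictionary between prime ideals and points of $X$ in part (b), via sobriety, is the second delicate point, though it is formal once (a) is in hand.
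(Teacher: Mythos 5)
The overall shape of your argument is correct — in particular your surjectivity step for part (a), which runs essentially like the paper's (realize each closed piece $W_i$ of $W$ by some compact $C_i$ and note that the ideal $\Theta_\sigma(W)$ contains all the $C_i$'s and has $\Phi_\sigma$-image exactly $W$), and your strategy for part (b) (primes $\leftrightarrow$ meet-irreducible specialization-closed sets, sobriety of the Zariski space, closedness of $f_\sigma$) matches the paper's as well.

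There is, however, a genuine gap in the injectivity step of part (a). You write that ``the faithfulness property --- that $\sigma(A)\subseteq\sigma(B)$ forces $A\in\langle B\rangle$ --- puts $A$ in $\bI$.'' That is \emph{not} the faithfulness property. The faithfulness property \eqref{EE:supportone} only asserts $\Phi_\sigma(\llangle M\rrangle)=\varnothing \Leftrightarrow M=0$. The implication you actually need --- that for compact $A,B$, $\Phi_\sigma(\langle A\rangle)\subseteq\Phi_\sigma(\langle B\rangle)$ forces $\langle A\rangle\subseteq\langle B\rangle$ --- is exactly the content of the noncommutative Hopkins' Theorem (Theorem~\ref{T:Hopkinstheorem}), which is a substantial result whose proof requires the full localization/colocalization machinery from Theorem~\ref{T:localizationtriangles}, rigidity of compact objects and the adjunction \eqref{eq:duality}, compact generation of $\bK$, Lemma~\ref{lMTN}, and the extension of $\sigma$ to the big (non-compact) category. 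Without invoking Hopkins' Theorem, your injectivity argument has no engine. Moreover you gloss over a second point: given $A$ with $\Phi_\sigma(\langle A\rangle)\subseteq\Phi_\sigma(\bI)$, the realization property hands you \emph{some} compact $B$ with $\Phi_\sigma(\langle B\rangle)$ equal to a given closed set, but there is no reason this $B$ should lie in $\bI$. The paper circumvents this by using that $X$ is a Zariski (in particular Noetherian and sober) space: one decomposes $\Phi_\sigma(\langle A\rangle)$ into \emph{finitely many} irreducible closed components $W_1,\dots,W_n$, picks generic points $x_i$, uses the definition of $\Phi_\sigma(\bI)$ (as a union over $\bI$) to find $M_i\in\bI$ with $x_i\in\Phi_\sigma(\langle M_i\rangle)$, whence $W_i\subseteq\Phi_\sigma(\langle M_i\rangle)$ by closedness, and then sets $M=\bigoplus_i M_i\in\bI$. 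Hopkins' Theorem applied to $M$ then gives $\langle A\rangle\subseteq\langle M\rangle\subseteq\bI$. This Noetherian decomposition is essential to pass from an arbitrary (infinite) union $\Phi_\sigma(\bI)$ back to a single compact object in $\bI$, and it is not in your sketch. You should explicitly identify Hopkins' Theorem as the key lemma and fold in the irreducible-component argument before the injectivity can be considered established.
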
 

Our third kind of support datum, which we name {\it{quasi support datum}}, is a map $\sigma : \bK \to \mc{X}$ that has the properties (i)-(iv) from the previous subsection, 
together with the additional property 
\smallskip
\begin{enumerate}
\item[(v'')] $\sigma(A \otimes B)\subseteq \sigma (A)$, for all $A, B \in \bK$. 
\end{enumerate}
\smallskip
In ideal-theoretic terms, this property is equivalent to
\smallskip
\begin{enumerate}
\item[(v'')] $\Phi_\sigma ( \llangle A \rrangle_{\mathrm{r}} ) = \sigma (A)$, $\forall A \in \bK$,
\end{enumerate}
\smallskip
where $\llangle A \rrangle_{\mathrm{r}}$ denotes the smallest thick right ideal of $\bK$ containing $A$. 
(The double bracket notation is used to distinguish thick ideals of $\bK$ from those of $\bK^c$ used in 
Theorem \ref{T:theoremC}, see Sections \ref{SS:noncomm-Hopkins}--\ref{S:onesided} for further details.)
We can now summarize 
our classification results for right ideals for the compact objects in a compactly generated M$\Delta$C. 

\begin{theorem} \label{T:theoremD}
\begin{enumerate}
\item[(a)]  Let $\bK$ be a compactly generated M$\Delta$C and $\sigma:\bK \to \mc{X}$ be an assignment that satisfies properties (i), (ii'), (iii) and (iv) for extended support datum, and such that 
$\sigma$ when restricted to $\bK^{c}$ is a quasi support datum for a Zariski space $X$ satisfying the realization property (7.1.\ref{E:supportthree}) and the homological 
Assumption~\ref{A:keyproj}. Then the map
\[
\Phi_\sigma : \{\text{thick right ideals of $\bK^{c}$}\} \to  \mc{X}_{sp}
\]
is a bijection.
\item[(b)]  Let $A$ be a finite-dimensional Hopf algebra over a field $\kk$ that satisfies the standard Assumption~\ref{A:fg} for finite generation and the homological 
Assumption~\ref{A:keyproj}. Set 
$X=\operatorname{Proj}(\operatorname{H}^{\bullet}(A,\kk))$. 
The standard cohomological support $\sigma : \stmod(A) \to \mc{X}_{cl}$ 
is a quasi support datum, and as a consequence, there is a bijection
$$
\Phi_\sigma: \{\text{thick right ideals of $\stmod(A)$}\} \to \XX_{sp},
$$
where $\stmod(A)$ is the stable (finite-dimensional) module category of $A$. 
\end{enumerate}
\end{theorem}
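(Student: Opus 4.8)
\smallskip
\noindent\emph{Proof sketch.}
The plan is to deduce part~(b) from part~(a), so the essential work is the bijectivity of $\Phi_\sigma$ in~(a), which I would prove by transporting the strategy behind Theorem~\ref{T:theoremC} to the one-sided setting. The crucial simplification is the ideal-theoretic form of axiom~(v''), namely $\Phi_\sigma(\llangle A\rrangle_{\mathrm r})=\sigma(A)$, together with its compact analogue: the thick right ideal of $\bK^c$ generated by $A\in\bK^c$ also has $\sigma$-image exactly $\sigma(A)$. The latter follows from the elementary observation that for any $W\subseteq X$ the class $\bK^c_{\le W}:=\{A\in\bK^c:\sigma(A)\subseteq W\}$ is a thick right ideal of $\bK^c$ — closure under the triangulated operations is axioms~(i)--(iv), and closure under right tensoring with compact objects is axiom~(v'') — so that the thick right ideal generated by $A$ sits inside $\bK^c_{\le\sigma(A)}$.

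For surjectivity of $\Phi_\sigma$, take $W\in\mc{X}_{sp}$ and write $W=\bigcup_{x\in W}\overline{\{x\}}$ as the union of the closures of its points. By the realization property~(7.1.\ref{E:supportthree}) each irreducible closed set $\overline{\{x\}}$ is of the form $\sigma(C_x)$ for a compact object $C_x$. Let $\bI_W$ be the thick right ideal of $\bK^c$ generated by $\{C_x:x\in W\}$. Then $\bI_W\subseteq\bK^c_{\le W}$, whence $\Phi_\sigma(\bI_W)\subseteq W$, while $W=\bigcup_x\sigma(C_x)\subseteq\Phi_\sigma(\bI_W)$ since each $C_x\in\bI_W$; thus $\Phi_\sigma(\bI_W)=W$ and in fact $\Phi_\sigma(\bK^c_{\le W})=W$.

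Injectivity is the heart of the matter and is equivalent to the reconstruction statement $\bI=\bK^c_{\le W}$, with $W=\Phi_\sigma(\bI)$, for every thick right ideal $\bI$ of $\bK^c$; only the inclusion $\bK^c_{\le W}\subseteq\bI$ needs proof, and then $\bI\mapsto\Phi_\sigma(\bI)$ and $W\mapsto\bK^c_{\le W}$ are mutually inverse by the previous paragraph. To establish it I would pass to the ambient compactly generated M$\Delta$C and combine three ingredients: (1) the one-sided analogue of the Neeman--Thomason correspondence from Section~\ref{SS:noncomm-Hopkins}, identifying thick right ideals of $\bK^c$ with the compact parts of the localizing right ideals of $\bK$ they generate, which reduces the claim to showing that an object $A\in\bK^c$ with $\sigma(A)\subseteq W$ lies in the localizing right ideal $\bL$ generated by $\bI$; (2) the Rickard/Bousfield localization attached to the specialization-closed set $W$, together with Assumption~\ref{A:keyproj}, which is precisely what makes $\sigma$ detect the associated idempotents, so that $A$ is recovered, through a homotopy colimit of Koszul objects supported on closed subsets of $W$, from objects lying in $\bL$; (3) axiom~(v''), which is what keeps every step of this reconstruction inside the world of right ideals. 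I expect the adaptation in~(2)--(3) to be the main obstacle: the tensor-product property for supports fails in the noncommutative setting, so the commutative idempotent calculus cannot be transported verbatim, and the delicate point is exactly the compatibility of the homotopy-colimit and Koszul-object constructions with right (rather than two-sided) ideals — which is what axiom~(v'') and Assumption~\ref{A:keyproj} are engineered to provide.

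For part~(b), I would verify the hypotheses of~(a) with $\bK=\StMod(A)$, $\bK^c=\stmod(A)$, and the standard cohomological support $\sigma(M)=V\big(\Ann_{\HB^\bullet(A,\kk)}\Ext^\bullet_A(M,M)\big)\subseteq\Proj\HB^\bullet(A,\kk)$. Axioms~(i)--(iv) and their extended coproduct versions are standard for cohomological support. For~(v''): since $A$ is a Hopf algebra, $-\otimes N$ is exact and carries projectives to projectives, hence descends to an exact functor on $\stmod(A)$ and induces a graded ring homomorphism $\Ext^\bullet_A(M,M)\to\Ext^\bullet_A(M\otimes N,M\otimes N)$ through which the $\HB^\bullet(A,\kk)$-module structure on the target factors; therefore $\Ann_{\HB^\bullet(A,\kk)}\Ext^\bullet_A(M,M)\subseteq\Ann_{\HB^\bullet(A,\kk)}\Ext^\bullet_A(M\otimes N,M\otimes N)$, i.e.\ $\sigma(M\otimes N)\subseteq\sigma(M)$, so $\sigma$ is a quasi support datum — notably with no appeal to (co)commutativity or a braiding. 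Assumption~\ref{A:fg} makes $\HB^\bullet(A,\kk)$ Noetherian and graded-commutative, so $X=\Proj\HB^\bullet(A,\kk)$ is a Zariski space and the realization property~(7.1.\ref{E:supportthree}) follows from the usual Carlson-module construction, which goes through here because $M\otimes P$ is projective whenever $P$ is; Assumption~\ref{A:keyproj} is imposed outright. Part~(a) then gives the asserted bijection $\Phi_\sigma:\{\text{thick right ideals of }\stmod(A)\}\to\mc{X}_{sp}$.
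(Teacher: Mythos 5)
Your surjectivity argument is essentially the paper's: for a specialization-closed $W$ you cover it by irreducible closeds $\overline{\{x\}}$, realize each by a compact $C_x$ with $\sigma(C_x)=\Phi_\sigma(\langle C_x\rangle_{\mathrm r})=\overline{\{x\}}$, and compare with $\bK^c_{\le W}$, which is a thick right ideal by (i)--(iv) plus (v''). That part is fine, and the observation that only $\bK^c_{\le W}\subseteq\bI$ needs proof for injectivity is the right reduction. Your verification of the hypotheses in part~(b) — (v'') via exactness of $-\otimes N$, Zariski-space and Carlson-module realization from Assumption~\ref{A:fg}, Assumption~\ref{A:keyproj} imposed — is also consistent with what the paper does in Section~\ref{S:fdHopfalgrightideals}.

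The gap is in the injectivity step, and it is substantive. The route you sketch in ingredient~(2) — Rickard/Bousfield idempotents attached to $W$, Koszul objects supported on closed subsets, and a homotopy colimit recovering $A$ — is the commutative BIK-style machinery, and as you yourself note it leans precisely on the strong tensor-product property of supports that is unavailable here. Assumption~\ref{A:keyproj} is not ``what makes $\sigma$ detect the associated idempotents''; in a non-symmetric M$\Delta$C the functors $\Gamma_{\bI'}, L_{\bI'}$ are not given by tensoring with idempotent objects at all, and no Koszul-object calculus is invoked anywhere in the paper's one-sided argument. What the paper actually does is prove a one-sided Hopkins theorem (Theorem~\ref{T:Hopkinstheorem2}) by the same mechanism as Theorem~\ref{T:Hopkinstheorem}: take $N\in\bK^c$ with $\Phi_\sigma(\langle N\rangle_{\mathrm r})\subseteq\sigma(M)$, form the localization triangle $\Gamma_{\bI'}(N)\to N\to L_{\bI'}(N)$ with $\bI'=\langle M\rangle_{\mathrm r}$; since $M\otimes S\in\bI'$ for every compact $S$ and $\Hom(\bI',L_{\bI'}(N))=0$, rigidity gives $\bK(S,M^*\otimes L_{\bI'}(N))=0$ for all compact $S$, hence $M^*\otimes L_{\bI'}(N)=0$ by compact generation; Assumption~\ref{A:keyproj} then forces $L_{\bI'}(N)=0$, so $N\cong\Gamma_{\bI'}(N)\in\Loc(\bI')$, and Neeman's theorem puts $N$ in $\bI'$. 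You also skip the reduction from a general thick right ideal $\bI$ to a principal one $\langle M\rangle_{\mathrm r}$: as in Theorem~\ref{I:bijectiongeneral}(a), one decomposes $\sigma(N)$ into irreducible components, uses generic points and the realization property to produce finitely many $M_i\in\bI$ with $\sigma(N)\subseteq\bigcup\sigma(M_i)$, sets $M=\bigoplus M_i$, and then applies Hopkins' theorem to conclude $\langle N\rangle_{\mathrm r}\subseteq\langle M\rangle_{\mathrm r}\subseteq\bI$. Without this reduction and without the localization-triangle argument, the injectivity proof is not there.
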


The maps in both parts are isomorphisms of ordered posets if and only if the quasi support datum $\sigma$ is a support datum.
Section \ref{smallq-Borel} illustrates how the classification of thick right ideals of an M$\Delta$C from Theorem~\ref{T:theoremD} can be converted to 
a classification of its thick two-sided ideals and then to a description of its Balmer spectrum.

In noncommutative ring theory, general classification results for prime spectra and sets of right ideals like the ones in Theorem~\ref{T:theoremC} and ~\ref{T:theoremD} are not available. 
Even worse, the Zariski topology of the prime spectrum of a noncommutative ring is very rarely known. For instance, in the case of quantum groups prime ideals are classified
\cite{Joseph} but the problem for describing the Zariski topology is wide open with only a conjecture  \cite{BG}: 
the inclusions between primes are unknown and only the maximal ideals have been classified \cite{Yakimov}.

It is unlikely that the Balmer spectrum of each M$\Delta$C is a Zariski space. Therefore, it would be desirable to find extensions of Theorem~\ref{T:theoremC} and \ref{T:theoremD} 
with weaker conditions.
\subsection{Applications} In Sections \ref{smallq-Borel} and \ref{BW} we illustrate Theorem~\ref{T:theoremC} and \ref{T:theoremD} by giving classifications 
of the noncommutative Balmer spectra, 
thick two-sided/right ideals for the stable module categories of all small quantum groups for Borel subalgebras and classifications of the noncommutative Balmer spectra and thick two-sided ideals of the Benson--Witherspoon Hopf algebras.
The approach to the Balmer spectrum through Theorem~\ref{T:theoremD} is applicable to the first case and the one through Theorem~\ref{T:theoremC} to the second case.
The Benson--Witherspoon Hopf algebras are the Hopf duals of smash products of a (finite) group algebra and a coordinate ring of a group. 

Sections \ref{smallq-Borel} and \ref{BW} also illustrate that the three notions of noncommutative support data that we define are distinct. 
This differs from the commutative situation where under natural assumptions a quasi support datum is a support datum \cite[Proposition 2.7.2]{BKN2}. 
\subsection{Acknowledgements.} The authors would like to thank Cris Negron and Julia Pevtsova for 
providing access to a preprint on their work on the support varieties via non-commutative hypersufaces 
\cite{NP}. Discussions with Cris Negron provided valuable insights for us to refine the results in Section~\ref{smallq-Borel} and to 
add Section~\ref{S:actionpi}. We also acknowledge Henning Krause for useful discussions about prior work on support theory and ideals in tensor triangular geometry.

\section{Triangulated 2-Categories}
\label{two}
\subsection{Triangulated categories and compactness} 
\label{tr-cat}
For the definition and properties on triangulated categories we refer the reader to \cite[Section 1.3]{BIK2} or \cite{Neeman2}.    
Let $\bT$ be a triangulated category with shift $\Sigma:\bT\rightarrow \bT$. Since $\bT$ is triangulated one has a set of distinguished triangles: 
$$M\rightarrow N \rightarrow Q \rightarrow \Sigma M.$$  

An additive subcategory $\bS$ of $\bT$ is a {\em triangulated subcategory} if (i) $\bS$ is non-empty and full, 
(ii) for $M\in \bS$, $\Sigma^{n}M \in \bS$ for all $n\in {\mathbb Z}$, and (iii) if $M\rightarrow N \rightarrow Q \rightarrow \Sigma M$ is 
distinguished triangle in $\bT$ and if two objects in $\{M,N,Q\}$ are in $\bS$ then the third is in $\bS$. 
A {\em{thick}} subcategory of $\bT$ is a triangulated subcategory $\bS$ with the property that, 
$M=M_{1}\oplus M_{2} \in \bS$ implies $M_{j}\in \bS$ for $j=1,2$.

In this paper, at some points we will assume that $\bT$ admits set indexed coproducts. 
A {\em localizing subcategory} $\bS$ is a  triangulated subcategory of $\bT$ that is closed under taking set indexed coproducts. Recall that 
localizing subcategories are thick \cite{BIK3}. Given $\CC$ a collection of objects of $\bT$, let $\Loc_{\bT}(\CC)=\Loc (\CC )$ 
be the smallest localizing subcategory containing $\CC$. 

For $A$ and $B$ objects of $\bT$, we will denote the space of morphisms from $A$ to $B$ by $\bT(A,B)$. If $\bT(C,-)$ commutes with set indexed coproducts for an object $C$ in $\bT$ then $C$ is called \emph{compact}. The full subcategory of compact objects in $\bT$ 
are denoted by $\bT^{c}$. We say that $\bT$ is \emph{compactly generated} if the isomorphism classes of compact objects form a set and if for each non-zero $M \in \bT$ there is a compact object $C$ such that $\bT(C,M) \neq 0$.  It turns out that when $\bT$ is compactly generated there exists a set of compact objects, $\CC$, 
such that $\Loc_{\bT}(\CC)=\bT$ (cf. \cite[Proposition 1.47]{BIK2}). 

\subsection{2-Categories and triangulated 2-categories}
\label{tri-two}

Recall that a {\em{2-category}} is a category enriched over the category of categories. This means that a 2-category $\bK$ has the following structure:
\begin{enumerate}
\item[(i)] A collection of objects;
\item[(ii)] For any two objects $A_1$ and $A_2$, a collection of 1-morphisms, denoted $\bK(A_1,A_2)$;
\item[(iii)] For any two 1-morphisms $f, g \in \bK(A_1,A_2)$, a collection of 2-morphisms $f \to g$, denoted $\bK(f,g)$.
\end{enumerate}
The 1- and 2-morphisms are composed in several ways and admit unit objects. They satisfy a list of axioms, among which we single out the following ones:
\begin{enumerate}
\item[(iv)] $\bK(A_1,A_2)$ is a 1-category, where the objects are the 1-morphisms of $\bK$, and the morphisms are the 2-morphisms of $\bK$; 
\item[(v)] Composition $\bK(A_2, A_3) \times \bK (A_1, A_2) \to \bK (A_1, A_3)$ is a bifunctor. For 1-morphisms 
$f \in \bK(A_2, A_3)$ and $g \in \bK (A_1, A_2)$,
the composition will be denoted by $f \circ g$.  
\end{enumerate}
For details on 2 categories and their role in categorification, we refer the reader to \cite{Lauda1,Mazorchuk2}.

We say that a 2-category $\bK$ is a {\em{triangulated 2-category}} if $\bK(A_1, A_2)$ is a triangulated 1-category for all pairs of objects $A_1$ and $A_2$, and the compositions
\[
\bK(A_2, A_3) \times \bK(A_1, A_2) \to \bK(A_1, A_3)
\]
are exact bifunctors for all objects $A_1, A_2,$ and $A_3$ of $\bK$. 

Throughout, we will assume that all triangulated 2-categories $\bK$ which we work with are {\em{small}}. This means that the objects of $\bK$ form a set, the 1-morphisms of $\bK(A_1, A_2)$ form a set for all objects $A_1$ and $A_2$, and the 2-morphisms $\bK(f,g)$ form a set for all 1-morphisms $f$ and $g$.

For a triangulated 2-category $\bK$, we will denote by $\bK_1$ the isomorphism classes of all 1-morphisms of $\bK$, and for any subsets $X, Y$ of $\bK_1$, we denote by $X \circ Y$ the set of all isomorphism classes which have a representative of the form $f \circ g$, where $f$ represents an element of $X$ and $g$ represents an element of $Y$
and $f$, $g$ are composable in this order.

A {\em{monoidal triangulated category}} (M$\Delta$C for short) is a monoidal category $\bT$ in the sense of Definition 2.2.1 of \cite{EGNO1} which is triangulated and for which the monoidal structure 
$\otimes : \bT \times \bT \to \bT$ is an exact bifunctor. 

\bre{monoidal-tri}
In the same way that 2-categories are generalizations of monoidal categories, triangulated 2-categories are generalizations of 
monoidal triangulated categories. From a monoidal triangulated category $\bT$ one can build a triangulated 2-category $\bK$ with one object as follows:
\begin{enumerate}
\item[(a)]The 1-morphisms of $\bK$ are defined to be the objects of $\mc{M}$, and composition of 1-morphisms $f$ and $g$ is given by 
\[
f \circ g := f \otimes g
\]
where the monoidal product is the product in $\bT$;
\item[(b)] The 2-morphisms of $\bK$ are the morphisms of $\bT$.
\end{enumerate} 
In the other direction, given a triangulated 2-category $\bK$ with one object, one defines a monoidal triangulated category $\bT$ in the following manner:
\begin{enumerate}
\item[(c)] Objects are the 1-morphisms of $\bK$;
\item[(d)] The monoidal product between objects is defined by 
\[
f \otimes g := f\circ g
\] 
where the right hand side is composition of 1-morphisms in $\bK$;
\item[(e)] The morphisms of $\bT$ are the 2-morphisms of $\bK$.
\end{enumerate}
\ere

\subsection{Triangulated 2-categories obtained as derived categories}

The following example gives a natural construction of a triangulated 2-category.

\bex{derived}
Let $\mc{S}$ be a set of (noncommutative) algebras. Then we define the 2-category $\bK_{\mc{S}}$ by:
\begin{enumerate}
\item[(a)] Objects are the elements of the set $\mc{S}$;
\item[(b)] The 1-morphisms $A_1 \to A_2$ are all finite-dimensional $A_2$-$A_1$ bimodules, and composition of 1-morphisms is given by tensor product: if $f_1: A_1 \to A_2$, then $f_1$ is an $A_2$-$A_1$ bimodule; likewise, for $f_2: A_2 \to A_3$, then $f_2$ is a $A_3$-$A_2$ bimodule; and then $$f_2 \circ f_1 :=f_2 \otimes_{A_2} f_1;$$
\item[(c)] 2-morphisms $f \to g$ are bimodule homomorphisms.
\end{enumerate}

$\bK_{\mc{S}}$ is a 2-category where each $\bK_{\mc{S}} (A_1, A_2)$ is an abelian 1-category. However, this does not fit to the setting of abelian 2-categories treated in \cite{VY1}
because the composition of 1-morphisms is not exact; instead, in general, it is only right exact. 

Therefore, we consider the triangulated 2-category $\bD_{\mc{S}}^-$, constructed from $\bK_{\mc{S}}$:
\begin{enumerate}
\item[(d)] Objects are the same as in $\bK_{\mc{S}}$, namely the elements of $\mc{S}$;
\item[(e)] The categories $\bD_{\mc{S}}^-(A_1, A_2)$ are the bounded above derived categories of the corresponding abelian categories $ \bK_{\mc{S}}(A_1, A_2)$, and composition 
\[
\bD_{\mc{S}}^-(A_2, A_3) \times \bD_{\mc{S}}^-(A_1,A_2) \to \bD_{\mc{S}}^-(A_1, A_3)
\]
is given by the left derived tensor product: $$f_2 \circ f_1 :=f_2 \otimes^L_{A_2} f_1$$ for $f_1: A_1 \to A_2, f_2: A_2 \to A_3$. 
\end{enumerate}

This gives us a triangulated 2-category by \cite[Exercise 10.6.2]{Weibel1}. 
\eex

\section{The Prime, Semiprime, and Completely Prime Spectrum}

In this section, we extend the notions of prime, semiprime, and completely prime spectra of noncommutative rings to the case of triangulated 2-categories.

\subsection{Thick ideals of a triangulated 2-category} We start with some terminology for 2-categories.
\bde{thick-id}
\begin{enumerate} 
\item[(a)] A {\em{weak subcategory}} of a 2-category $\bK$ is a subcollection of objects $\bI$ of $\bK$ and a collection of subcategories $\bI(A_1, A_2)$ 
of $\bK(A_1, A_2)$, for all $A_1, A_2 \in \bI$, such that composition in $\bK$ restricts to a bifunctor 
\[
\bI(A_2, A_2) \times \bI(A_1, A_2) \to \bI(A_1, A_3)
\] for all $A_1, A_2, A_3 \in \bI$.
\item[(b)] A {\em{thick weak subcategory}} of a triangulated 2-category $\bK$ is a weak subcategory $\bI$ of $\bK$ which has the same objects as $\bK$, and for any pair of objects $A_1, A_2 \in \bI$, the categories $\bI(A_1, A_2)$ are thick subcategories of $\bK(A_1,A_2)$. 
\item[(c)] A {\em{thick ideal}} of a triangulated 2-category $\bK$ is a thick weak subcategory $\bI$ such that for any 1-morphism $f$ in $\bK$ and any 1-morphism $g$ in $\bI$, 
if $g$ and $f$ are composable in either order, then their composition is in $\bI_1$.
\end{enumerate} 
\ede

For any collection of 1-morphisms $\mc{M}$, we will denote by $\langle \mc{M} \rangle$ the smallest thick ideal containing $\mc{M}$, which exists since the intersection of any family of thick ideals is a thick ideal. 

The following lemma is the primary tool by which we connect classical noncommutative ring theory to the setting of triangulated 2-categories. 

\ble{MTN}For every two collections $\mc{M}, \mc{N} \subseteq \bK_1$ of 1-morphisms of a triangulated 2-category $\bK$, 
\begin{equation}
\label{MTN}
\langle \mc{M} \rangle_1 \circ \langle \mc{N} \rangle_1 \subseteq \langle \mc{M} \circ \bK_1 \circ \mc{N} \rangle_1.
\end{equation}
\ele
\begin{proof}
First, we will show that 
\begin{equation}
\label{first-emb}
\langle \mc{M} \rangle_1 \circ \mc{N} \subseteq \langle \mc{M} \circ \bK_1 \circ \mc{N} \rangle_1.
\end{equation}
Let $\bI$ denote the collection of all 1-morphisms $f$ which have the property that for all $n \in \mc{N}, t \in \bK_1$, one has $ftn \in \langle \mc{M} \circ \bK_1 \circ \mc{N} \rangle.$ Note that $\mc{M} \subseteq \bI$. We claim that $\bI$ is a thick ideal.

(1) Suppose that we have a distinguished triangle 
\[
f \to g \to h \to \Sigma f
\]
such that two of $f, g,$ and $h$ are in $\bI$. Since composition is an exact functor, for any $t \in \bK, n \in \mc{N}$, 
\[
ftn \to gtn \to htn \to \Sigma ftn
\]
is a distinguished triangle, and by assumption two out of three of its components are in $\langle \mc{M} \circ \bK \circ \mc{N} \rangle$. Since it is an ideal, so is the third. Therefore, $\bI$ is a triangulated weak subcategory.

(2) Again, let $t \in \bK$ and $n \in \mc{N}$. Suppose $f= g \oplus h$ is in $\bI$. Then $gtn \oplus htn$ is in $\langle \mc{M} \circ \bK \circ \mc{N} \rangle$; by its thickness, $gtn$ and $htn$ are in $\langle \mc{M} \circ \bK \circ \mc{N} \rangle$. Hence, $g$ and $h$ are both in $\bI$. Therefore, $\bI$ is a thick weak subcategory. 

(3) Let $f \in \bI$, and let $g$ and $h$ be 1-morphisms of $\bK$ such that the compositions $gf, fh$ are defined. Then for any $t \in \bK, n \in \mc{N}$, 
$gftn \in \langle \mc{M} \circ \bK \circ \mc{N} \rangle$ by the fact that $\langle \mc{M} \circ \bK \circ \mc{N} \rangle$ is an ideal and $ftn$ is in it; 
and $fhtn \in \langle \mc{M} \circ \bK \circ \mc{N} \rangle$ by the fact that $f \in \bI$ and $ht$ is a 1-morphism of $\bK$. Therefore, $\bI$ is a thick ideal of $\bK$. 

Since $\bI$ is a thick ideal containing $\mc{M}$, $\langle \mc{M} \rangle \subseteq \bI$. From this, we obtain \eqref{first-emb}. 

By symmetry, we can obtain
\begin{equation}
\label{second-emb}
\mc{M} \circ \langle \mc{N} \rangle_1 \subseteq \langle \mc{M} \circ \bK \circ \mc{N} \rangle.
\end{equation}
Then, by an identical argument to (1)-(3) but using instead $\bI$ to be the set of morphisms $f$ for which 
$ftn \in \langle \mc{M} \circ \bK \circ \mc{N} \rangle$ for all $t \in \bK_1, n \in \langle \mc{N} \rangle_1$. This completes the proof. 
\end{proof}

\subsection{Prime ideals of a triangulated 2-category} We now introduce the key notion of prime ideal and provide equivalent 
formulations on when an ideal is prime. In the case of a monoidal triangulated category, this notion was introduced in \cite{BKS}.

\bde{prime-id}
Let $\bP$ be a proper thick ideal of $\bK$. Then $\bP$ a {\em{prime ideal}} of $\bK$ if for every pair of thick ideals $\bI$ and $\bJ$ of $\bK$, 
\[
\bI_1 \circ \bJ_1 \subseteq \bP_1 \quad \Rightarrow \quad \bI \subseteq \bP \; \;  \mbox{or} \; \;  \bJ \subseteq \bP. 
\]
The set of all prime ideals $\bP$ of a triangulated 2-category $\bK$ will be denoted by $\Spc(\bK)$. 
\ede

\bth{prime-equiv}
Suppose $\bP$ is a proper thick ideal of a triangulated 2-category $\bK$. Then the following are equivalent:
\begin{enumerate}
\item[(a)] $\bP$ is prime;
\item[(b)] For all $m, n \in \bK_1$, $m \circ \bK_1 \circ n \subseteq \bP_1$ implies that either $m$ or $n$ is in $\bP_1$;
\item[(c)] For every pair of right thick ideals $\bI$ and $\bJ$ of $\bK$, $\bI_1 \circ \bJ_1 \subseteq \bP_1$ implies that either $\bI$ or $\bJ$ is contained in $\bP$;
\item[(d)] For every pair of left thick ideals $\bI$ and $\bJ$ of $\bK$, $\bI_1 \circ \bJ_1 \subseteq \bP_1$ implies that either $\bI$ or $\bJ$ is contained in $\bP$;
\item[(e)] For every pair of thick ideals $\bI$ and $\bJ$ of $\bK$ which properly contain $\bP$, $\bI_1 \circ \bJ_1 \not \subseteq \bP_1$.
\end{enumerate}
\eth

\begin{proof}
It is clear that just by definition (c)$\Rightarrow$(a), (d)$\Rightarrow$(a), and (a)$\Rightarrow$(e).

(a)$\Rightarrow$(b) Suppose $\bP$ is prime and $m \circ \bK_1 \circ n \subseteq \bP_1$. 
By \leref{MTN}, $\langle m \rangle \circ \langle n \rangle \subseteq \bP$, and thus either $\langle m \rangle$ or $\langle n \rangle \subseteq \bP$; therefore, either $m$ or $n$ is in $\bP$. 

(b)$\Rightarrow$(c) Suppose $\bI, \bJ$ right thick ideals and $\bI_1 \circ \bJ_1 \subseteq \bP_1$, and suppose that neither $\bI$ nor $\bJ$ is contained in $\bP$. Then there exist morphisms $f \in \bI, g \in \bJ$ such that neither $f$ nor $g$ is in $\bP$. Since $\bI$ is a right ideal, $f \circ \bK_1 \subseteq \bI$, and therefore $f \circ \bK_1 \circ g \subseteq \bP_1$. Since neither $f$ nor $g$ is in $\bP$, we have proved the contrapositive.  The direction (b)$\Rightarrow$(d) is analogous.

(e)$\Rightarrow$(b) Let $m \circ \bK_1 \circ n \subseteq \bP_1$. Suppose neither $m$ nor $n$ is in $\bP$. Then by \leref{MTN}
\[
\langle \bP_1 \cup \{m\} \rangle_1 \circ \langle \bP_1 \cup  \{n\} \rangle_1 \subseteq \langle (\bP_1 \cup \{m\}) \circ \bK_1 \circ (\bP_1 \cup \{n\}) \rangle_1 \subseteq \bP_1.
\]
However, both $\langle \bP_1 \cup \{m\} \rangle$ and $\langle \bP_1 \cup \{n\} \rangle$ are thick ideals properly containing $\bP$, thus proving the contrapositive. 
\end{proof}

Recall that a {\em{multiplicative set}} $\mc{M}$ of 1-morphisms in a 2-category is a set of non-zero 1-morphisms contained in $\bK(A,A)$ for some object $A$ such that for all $f, g \in \mc{M}$, 
we have $f \circ g \in \mc{M}$. 

\bth{maximal} Suppose $\mc{M}$ is a multiplicative subset of $\bK_1$ for a triangulated 2-category $\bK$, and suppose $\bI$ is a proper thick ideal of $\bK$ which intersects $\mc{M}$ trivially. 
If $\bP$ is a maximal element of the set 
\[
X(\mc{M}, \bI):=\{\bJ \textrm{ a thick ideal of }\bK: \bJ \supseteq \bI, \bJ_1 \cap \mc{M}= \varnothing \},
\]
then $\bP$ is a prime ideal of $\bK$. \eth

\begin{proof} This follows directly from property (e) of \thref{prime-equiv}. \end{proof}

Every chain of ideals in $X(\mc{M}, \bI)$ has an upper bound given by the union of these ideals. By Zorn's Lemma, all sets 
$X(\mc{M}, \bI)$ have maximal elements. 

\bco{nonempty} For every triangulated 2-category $\bK$, $\Spc(\bK)$ is nonempty. \eco

\begin{proof} This follows from \thref{maximal} by taking $\mc{M}$ as the set $\{1_A\}$ for some nonzero fixed object $A$ and $\bI$ as the zero ideal consisting of all zero 1-morphisms. \end{proof}

\subsection{The Zariski topology} With our notion of primeness, we can now define a topology on the the set of prime ideals. 

\bde{zariski}
Define a map $V$ from subsets of $\bK_1$ to subsets of $\Spc (\bK)$ by $$V(\mc{S})=\{ \bP \in \Spc \bK : \bP \cap \mc{S}= \varnothing \}.$$ 
The {\em{Zariski topology}} on $\Spc(\bK)$ is defined as the topology generated by the closed sets $V(\mc{S})$ for any subset $\mc{S}$ of $\bK_1$.
\ede
We call the topological space $\Spc(\bK)$ the {\em{Balmer spectrum}} of $\bK$. In the case when $\bK$ is a monoidal triangulated category
(where the monoidal structure is symmetric, or more generally, braided), this reduces to the spectrum defined in \cite{Balmer1}. In that setting 
the notions of prime ideal and completely prime ideal coincide. For general monoidal triangulated categories the topology was introduced in \cite{BKS}.

\bre{zariski-monoid-2cat}
In the case that $\bK$ is a monoidal triangulated category, i.e., a triangulated 2-category with one object, all closed sets under the Zariski topology are of the form $V(\mc{S})$ described above. In that case, it is easy to see that $V(0)=\varnothing, V(1)=\Spc( \bK), V(\mc{S})_1 \cup V(\mc{S}_2)= V( \mc{S}_1 \oplus \mc{S}_2)$, and $\bigcap V(\mc{S}_i)=V \left ( \bigcup \mc{S}_i \right ).$ However, if $\bK$ has more than one object, there is no longer any set $\mc{S}$ such that $V(\mc{S})= \Spc (\bK)$, 
and, in general, the union $V(\mc{S}_1) \cup V(\mc{S}_2)$ cannot be written as $V(\mc{S})$ for some $\mc{S}$.
\ere

\bre{mistake-BKS} It was stated in \cite[Lemma 8.2]{BKS} that  $\Spc \bK$ is functorial for a monoidal triangulated category $\bK$. The proof given there appears to have a gap. In the proof it is stated but not justified that the Balmer support composed with a monoidal triangulated functor is again a support datum.
We do not expect such a functoriality; the case of the spectrum of a noncommutative ring is well known to be non-functorial.
\ere

\subsection{Semiprime ideals of a triangulated 2-category} In this section we define semiprime ideals in an abelian 2-category and study their properties.

\bde{semip} A thick ideal of an abelian 2-category will be called {\em{semiprime}} if it is an intersection of prime ideals, cf. \cite{BKS}.
\ede

\bth{semiprime-equiv}
Suppose $\bQ$ is a proper thick ideal of a triangulated 2-category $\bK$. Then the following are equivalent:
\begin{enumerate}
\item[(a)] $\bQ$ is a semiprime ideal;
\item[(b)] For all $f \in \bK_1$, if $f \circ \bK_1 \circ f \subseteq \bQ_1$, then $f \in \bQ_1;$
\item[(c)] If $\bI$ is any thick ideal of $R$ such that $\bI_1 \circ \bI_1 \subseteq \bQ_1$, then $\bI \subseteq \bQ$;
\item[(d)] If $\bI$ is any thick ideal properly containing $\bQ$, then $\bI_1 \circ \bI_1\not \subseteq \bQ_1$;
\item[(e)] If $\bI$ is any left thick ideal of $\bK$ such that $\bI_1 \circ \bI_1 \subseteq \bQ_1$, then $\bI \subseteq \bQ$.
\item[(f)] If $\bI$ is any right thick ideal of $\bK$ such that $\bI_1 \circ \bI_1 \subseteq \bQ_1$, then $\bI \subseteq \bQ$.
\end{enumerate}
\eth

\begin{proof}
(a)$\Rightarrow$(b) Suppose $f \circ \bK_1 \circ f \subseteq \bQ_1$, and let $\bQ= \bigcap_{\alpha} \bP_\alpha$ for prime ideals $\bP_{\alpha}$. Then by \thref{prime-equiv}, $f$ is in $\bP_\alpha$ for each $\alpha$, and hence $f \in \bQ$.

(b)$\Rightarrow$(e) Suppose $\bI_1 \circ \bI_1 \subseteq \bQ_1$, and suppose $\bI\not \subseteq \bQ$. Then there is $f \in \bI$ with $f \not \in \bQ$. Hence, since $ tf \in \bI$ for each $t \in \bK_1$, we have $\bK_1 \circ f \subseteq \bI$, and hence $f \circ \bK_1 \circ f \subseteq \bQ_1$. Since $f \not \in \bQ_1,$ $\bQ$ does not satisfy property (b). The implication (b)$\Rightarrow$(f) is analogous.

The implications (e)$\Rightarrow$(c) and (f)$\Rightarrow$(c) are clear, as is (c)$\Rightarrow$(d). 

(d)$\Rightarrow$(a) Let $\bQ$ a proper thick ideal satisfying (d), and let $\bR$ be the semiprime ideal defined as the intersection of all prime ideals containing $\bQ$; there is at least one such prime ideal by \thref{maximal}. We will show that $\bR =\bQ$; to do this, for an arbitrary 1-morphism $f$ which is not in $\bQ_1$, we will produce a prime ideal which contains $\bQ$ and does not contain $f$. Denote $f:=f_1.$ Since $f_1 \not \in \bQ_1,$ we have $\bI^{(1)}:=\langle \bQ_1 \cup\{f_1\} \rangle$ properly contains $\bQ$. Hence, there is some $f_2 \in \bI^{(1)} \circ \bI^{(1)}$ with $f_2 \not \in \bQ$. Continue in this manner, defining $\bI^{(i)}:=\langle \bQ_1 \cup \{f_i\} \rangle$ and then $f_{i+1}$ as an element of $\bI^{(i)} \circ \bI^{(i)}$ which is not in $\bQ_1$. Note that for any $i$, 
\[
\bI^{(i)} \subseteq \bI^{(i-1)} \circ \bI^{(i-1)} \subseteq \bI^{(i-1)}.
\]

 Now consider a maximal element of the set of ideals $\bI$ such that $\bQ \subseteq \bI$ and $f_i \not \in \bI_1$ for all $i$. Call this maximal element $\bP$. We will demonstrate that $\bP$ is prime. Consider $\bJ$, $\bK$ two ideals properly containing $\bP$. By maximality of $\bP$, each of $\bJ$ and $\bK$ contain one of the $f_i$. Let $f_j \in \bJ$, $f_k \in \bK$. Without loss of generality, let $j \geq k$. Then since $\bK$ contains both $\bQ$ and $f_k$, 
\[
\bK \supseteq \bI^{(k)} \supseteq \bI^{k+1} \supseteq... \supseteq \bI^{(j)}.
\] 
Therefore, $f_j$ is in both $\bJ$ and $\bK$. Then note that by \leref{MTN}, 
\[
f_{j+1} \in \bI^{(j)} \circ \bI^{(j)} \subseteq \langle (\bQ_1 \cup \{f_j\}) \circ \bK_1 \circ(\bQ_1 \cup \{f_j\}) \rangle
\]
and 
\[
f_{j+1} \not \in \bP_1.
\]
Therefore,
\[
f_j \circ \bK_1 \circ f_j \not \subseteq \bP_1,
\]
which implies
\[ 
\bJ_1 \circ \bK_1 \not \subseteq \bP_1.
\]
Thus, by \thref{prime-equiv}, $\bP$ is prime. By construction, it contains $\bQ$ and not $f_1$, which completes the proof.
\end{proof}

\subsection{Completely prime ideals of a triangulated 2-category} In this section we introduce the notion of completely prime ideals 
for a triangulated 2-category and show how these ideals arise via categories of 1-morphisms. 

\bde{compl}
A thick ideal $\bP$ of a triangulated 2-category $\bK$ will be called {\em{completely prime}} when it has the property that 
for all $f, g \in \bK_1$:
\[
f \circ g \in \bP_1 \quad \Rightarrow \quad f \in \bP_1 \; \; \mbox{or} \; \; g \in \bP_1.
\]
\ede
We have the following lemma, whose proof is direct and left to the reader.
\ble{c-prime} Let $\bK$ be a triangulated 2-category.
\begin{enumerate}
\item[(a)] If $\bP$ is a completely prime ideal of $\bK$, then there exists an object $A$ of $\bK$ and 
a completely prime ideal $\bQ$ of the triangulated 2-category with 1-object $\bK_A$ such that
\begin{equation}
\label{PQ}
\bP(B,C) =
\begin{cases}
\bQ(A,A), &\mbox{if} \; \; B=C =A
\\
\bK(B,C), &\mbox{otherwise}.
\end{cases}
\end{equation}
\item[(b)] If $A$ is an object of $\bK$ and $\bQ$ is a completely prime ideal of $\bK_A$ such that 
\begin{equation}
\label{PQcond}
\bK(B,A) \circ \bK(A,B) \subseteq \bQ(A,A)
\end{equation}
for every object $B$ of $\bK$, then \eqref{PQ} defines a completely prime ideal $\bP$ of $\bK$.
\end{enumerate}
\ele

\section{Noncommutative Support Data}

In this section we develop a notion of (noncommutative) {\em{support datum}} for {\em{monoidal triangulated categories}} $\bK$ and show that 
the Balmer spectrum of $\bK$ provides a {\em{universal final object}}. In parallel to noncommutative ring theory, we are lead to 
convert support data from maps with a domain the set of objects of $\bK$ to maps with a domain given by the (two-sided) thick ideals of $\bK$, 
in which case the maps become simply morphisms of monoids. The second incarnations will play a key role in the rest of the paper. The 
original support datum maps are not uniquely reconstructed from them. To handle this, we develop a notion 
of {\em{weak support datum}} and prove a universality property for the Balmer spectrum in this setting also. 

\subsection{Defining noncommutative support data} 
\label{noncomm-supp}
Assume throughout the rest of the paper that 
\medskip
\begin{center}
{\em{ $\bK$ is a monoidal triangulated category (M$\Delta$C for short) and $X$ is a topological space.}}
\end{center}
\medskip
Let $\mc{X}$ denote the collection of all subsets of $X$, $\mc{X}_{cl}$ 
the collection of all closed subsets of $X$, and $\mc{X}_{sp}$ the collection of all specialization closed subsets of $X$, that is, arbitrary unions of closed sets.

When it is necessary to emphasize the underlying topological space $X$, we will use the notation $\mc{X}_{cl}(X)$ and $\mc{X}_{sp}(X)$. 

\bde{nc-support} \label{supportdatum}
Let $\bK$ be a monoidal triangulated category and $\sigma$ a map $\bK \to \mc{X}$. We will say that $\sigma$ is a (noncommutative) {\em  support datum} if 
the following hold:
\begin{enumerate}
\item[(a)] $\sigma(0)=\varnothing $ and $ \sigma(1)= X$;
\item[(b)] $\sigma(A\oplus B)=\sigma(A)\cup \sigma(B)$, $\forall A, B \in \bK$; 
\item[(c)] $\sigma(\Sigma A)=\sigma(A)$, $\forall A \in \bK$; 
\item[(d)] If $A \to B \to C \to \Sigma A$ is a distinguished triangle, then $\sigma(A) \subseteq \sigma(B) \cup \sigma(C)$;
\item[(e)] $\bigcup_{C \in \bK} \sigma (A   \otimes C \otimes B) = \sigma (A) \cap \sigma(B)$, $\forall A, B \in \bK$. 
\end{enumerate}
\ede
It follows from conditions (a) and (d) that $\sigma$ is constant along the isomorphism classes of objects of $\bK$. The same will be true for all other 
notions of support datum that we consider in this paper.
Recall the map $V$ defined in \deref{zariski}.

\ble{v-support}
For any M$\Delta$C, $\bK$, the restriction to objects of $\bK$ of the map $V$ is a support datum $\bK \to\mc{X}_{cl}(\Spc (\bK))$.
\ele

\begin{proof}
By the definition of the Zariski topology, $V(A)$ is closed for any $A$. We verify the properties (a)-(e) in Definition~\ref{supportdatum} below. 

(a) $V(0)= \varnothing$ because 0 is in every prime ideal of $\bK$. Since prime ideals are required to be proper, 1 is in no prime ideal, and hence $V(1)=\Spc(\bK)$. 

(b) Prime ideals are closed under sums and summands. Hence, if $\bP$ is a prime ideal, then $A \oplus B \in \bP$ if and only if both $A$ and $B$ are in $\bP$. 

(c) Prime ideals are closed under shifts; hence, $A$ is in a prime ideal $\bP$ if and only if $\Sigma A$ is in $\bP$.

(d) Since prime ideals are triangulated, if $$A \to B \to C \to \Sigma A$$ is a distinguished triangle with $\bP \in V(A)$, then $A \not \in \bP$ and hence one of $B$ or $C$ be not be in $\bP$. Therefore, $\bP$ is in $V(B)$ or $V(C)$.

(e) First, we will show $\subseteq$. Suppose $\bP$ is in some $V(A \otimes C \otimes B)$ for some $C$; in other words, $A \otimes C \otimes B \not \in \bP$. Then since $\bP$ is a thick ideal, neither $A$ nor $B$ can be in $\bP$, and hence $\bP \in V(A)$ and $V(B)$. For $\supseteq$, suppose $\bP \in V(A) \cap V(B)$. Then by the primeness condition, $A \otimes \bK \otimes B  \not \subseteq \bP$, since that would imply either $A$ or $B$ would be in $\bP$. Hence, there is some $C$ with $A \otimes C \otimes B \not \in \bP$, and so $\bP \in V(A \otimes C \otimes B)$ for some choice of $C$.\end{proof}

\subsection{The final support datum} We begin with a lemma that shows that if we have continuous maps from $X$ to  $\Spc \bK$ whose inverse 
images agree on closed sets then the maps must be equal. 

\ble{f1-f2-agree}
Let $X$ be a set and $f_1, f_2: X \to \Spc \bK$ be two maps such that $f_1^{-1}( V(A))=f_2^{-1}(V(A))$ for all objects $A$ of $\bK$. Then $f_1=f_2$.
\ele

\begin{proof} By assumption, for all $A \in \bK$ and $x \in X$, $f_1(x) \in V(A) \Leftrightarrow f_2(x) \in V(A)$. Hence, 
for all $x \in X$,
$$\bigcap_{A \in \bK, f_1(x) \in V(A)} V(A) = \bigcap_{A \in \bK, f_2(x) \in V(A)} V(A),$$ 
and thus, 
\begin{align*}
V(\bK \backslash f_1(x) )&= \overline{\{f_1(x)\}} =  \bigcap_{A \in \bK, f_1(x) \in V(A)} V(A) =  \bigcap_{A \in \bK, f_2(x) \in V(A)} V(A)\\
&= \overline{ \{f_2(A)\}} = V(\bK \backslash f_2(x)).
\end{align*}
Since $f_1(x) \in V( \bK \backslash f_1(x))$, the above equality implies that $f_1(x) \in V(\bK \backslash f_2(x))$. 
Therefore, $f_1(x) \subseteq f_2(x)$, and analogously, $f_2(x) \subseteq f_1(x)$. Hence,
$f_1 = f_2$.
\end{proof}

With the prior results we can show that there exists a final support datum. 

\bth{univ-supp}
In the collection of support data $\sigma$ such that $\sigma(A)$ is closed for each object $A$, the support $V$ is the final support object for every M$\Delta$C $\bK$: 
that is, given any other support datum $\sigma$ as above, there is a unique continuous map $f_\sigma: X \to \Spc \bK$ 
satisfying $\sigma(A)= f_\sigma^{-1}( V(A)).$ Explicitly, this map is defined by $$f_\sigma(x)= \{ A \in \bK : x \not \in \sigma(A)\}.$$
\eth

\begin{proof}
The uniqueness of this map follows directly from \leref{f1-f2-agree}. We need to show that the formula given for $f_\sigma(x)$ defines a prime ideal, and that 
$\sigma(A)=f_\sigma^{-1}(V(A))$, which will then imply that $f$ is continuous. 

The subset $f_\sigma(x)$ satisfies the two-out-of-three condition, since if $$A \to B \to C \to \Sigma A$$ is a distinguished triangle with $B$ and $C$ in $f_\sigma(x)$, this means that $x$ is not in $\sigma(A)$ or $\sigma(B)$, and by condition (d) of support data that implies that $x \not \in \sigma(A)$, and so $A \in f_\sigma(x)$. Additionally, $A \in f_\sigma(x)$ if and only if $x \not \in \sigma(A)$, which, by condition (c) for support data, happens if and only if $x \not \in \sigma(\Sigma A)$, i.e. $\Sigma A \in f_\sigma(x)$. Therefore, $f_\sigma(x)$ is closed under shifts, and so it is triangulated. 

The triangulated subcategory $f_\sigma(x)$ is also thick, because if $A \oplus B \in f_\sigma(x)$ then $x$ is not in $\sigma(A \oplus B)$, and by condition (b) 
of support data $x$ is not in $\sigma(A)$ or $\sigma(B)$. Therefore, $A$ and $B$ are in $f_\sigma(x)$. 

Next, we will observe that $f_\sigma(x)$ is a (two-sided) ideal. Suppose that $A \in f_\sigma(x)$. Then $x \not \in \sigma(A)$. For any $B$, 
since by condition (e) for support data $$\sigma(A \otimes B) \subseteq \sigma(A) \cap \sigma(B),$$ we have $x \not \in \sigma(A \otimes B),$ 
and therefore $A \otimes B \in f_\sigma(x)$. The same argument shows that $B \otimes A \in f_\sigma(x)$ as well. 

Lastly, we verify that $f_\sigma(x)$ is prime. 
Suppose $A \otimes \bK \otimes B \subseteq f_\sigma(x)$. Then for all objects $C$, $x \not \in V(A \otimes C \otimes B)$. Hence, by condition (e) of being a support 
datum, $x \not \in \sigma(A) \cap \sigma(B)$, implying that it is not in $\sigma(A)$ or $\sigma(B)$. Therefore, either $A$ or $B$ is in $f_\sigma(x)$. 

Lastly, we just verify the formula $\sigma(A)= f_\sigma^{-1}( V(A)).$ We have
\begin{align*}
x \in f_\sigma^{-1}( V(A)) & \Leftrightarrow f_\sigma(x) = \{ B : x \not \in \sigma(B)\} \in V(A),\\
& \Leftrightarrow A \not \in \{B : x \not \in \sigma(B)\}\\
& \Leftrightarrow x \in \sigma(A).
\end{align*}
This completes the proof. 
\end{proof}

\subsection{The map $\Phi_\sigma$} \label{SS:phimap}
For any map $\sigma: \bK \to \mc{X}$ with a topological space $X$, we associate a map $\Phi_{\sigma}$ from subsets of $\bK$ to $\mc{X}$ given by
\begin{equation}
\label{Phi}
\Phi_{\sigma} (\mc{S}):= \bigcup_{A \in \mc{S}} \sigma(A).
\end{equation}
By definition, the map $\Phi_\sigma$ is order preserving with respect to the inclusion partial order.

If $\sigma: \bK \to \mc{X}_{cl}$ is a support datum, then $\Phi_{\sigma} (\mc{S})$ is a specialization-closed subset of $X$ for every $\mc{S} \subseteq \bK$. 
We can now prove that the map $\Phi_{\sigma}$ respects the tensor product property on ideals. 

\ble{phi-monoid}
Let $\bK$ be an M$\Delta$C and $\sigma: \bK \to \mc{X}_{cl}$ be a support datum.
Then 
$$
\Phi_{\sigma} (\bI \otimes \bJ) = \Phi_{\sigma} (\bI) \cap \Phi_{\sigma}(\bJ)
$$
for every two thick ideals $\bI$ and $\bJ$ of $\bK$, recall \eqref{Phi}.
\ele

\begin{proof}
We have
\begin{align*}
\Phi_{\sigma}( \bI \otimes \bJ)&= \bigcup_{A \in \bI, B \in \bJ} \sigma (A \otimes B) = \bigcup_{A \in \bI, B \in \bJ, C \in \bK} \sigma (A \otimes C \otimes B)\\
&= \bigcup_{A \in \bI, B \in \bJ} \sigma (A) \cap \sigma(B) = \left( \bigcup_{A \in \bI} \sigma(A) \right) \cap \left ( \bigcup_{B \in\bJ} \sigma(B)\right )\\
&= \Phi_{\sigma}( \bI) \cap \Phi_{\sigma}( \bJ).
\end{align*}
\end{proof}

\ble{gamma-ideal}
Let $\bK$ be an M$\Delta$C and $\sigma: \bK \to \mc{X}$ be a support datum. 
For any subset $\mc{S}$ of $\bK$, $\Phi_{\sigma} ( \mc{S}) = \Phi_{\sigma} \left ( \langle \mc{S} \rangle \right ).$
\ele

\begin{proof}
We will check that by adjoining direct summands, shifts, cones, and tensor products to $\mc{S}$ one does not alter $\Phi_{\sigma}(\mc{S})$; this will prove the statement.

Let $\bigoplus_{i \in I} M_i \in \mc{S}$. By condition (b) of support data, $\sigma(M_i) \subseteq \sigma(M \oplus N)$, so adjoining each $M_i$ to $\mc{S}$ does not change $\bigcup_{A \in \mc{S}} \sigma (A)$. 

Let $M \in \mc{S}$. Then, by condition (c) for support data, $\sigma ( \Sigma^m M)= \sigma(M)$, so adjoining shifts to $\mc{S}$ does not alter $\Phi_{\sigma}(\mc{S})$ either.

If $A \to B \to C \to \Sigma A$ is a distinguished triangle with $B$ and $C$ in $\mc{S}$ then $\sigma(A) \subseteq \sigma(B) \cup \sigma(C)$ by condition (d) for support data, so adding $A$ to $\mc{S}$ does not change $\Phi_{\sigma}(\mc{S})$.

Lastly, if $M \in \mc{S}$, then by condition (5) for support data we have $\sigma(M \otimes N) \subseteq \sigma(M) \cap \sigma(N) \subseteq \sigma(M)$. Hence, we can add $M \otimes N$ to $\mc{S}$ without affecting $\sigma(\mc{S})$. Likewise for $N \otimes M$.

Therefore, closing $\mc{S}$ under summands, shifts, cones, and tensor product with arbitrary objects of $\bK$ does not alter $\Phi_{\sigma}$, which proves the lemma.
\end{proof}

The following theorem summarizes our results for support data. 

\begin{theorem} \label{phi-ideal}
For an M$\Delta$C, $\bK$, and a support datum $\sigma: \bK \to \mc{X}_{cl}$, the map
$\Phi_{\sigma}$ is a morphism of ordered monoids from the set of thick ideals of $\bK$ with the operation 
$\bI, \bJ \mapsto \langle \bI \otimes \bJ \rangle$ and the inclusion partial order 
to $\mc{X}_{sp}$ with the operation of intersection and the inclusion partial order. 
\end{theorem}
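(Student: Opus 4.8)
The plan is to obtain the statement by stitching together \leref{phi-monoid} and \leref{gamma-ideal}, after a short verification that the two sides genuinely are ordered monoids of the asserted form and that $\Phi_{\sigma}$ preserves their units. Concretely there are four things to check: (i) $\ThickId(\bK)$ with the product $\bI,\bJ\mapsto\langle\bI\otimes\bJ\rangle$ and inclusion order is a monoid with two-sided identity $\bK$, and $\mc{X}_{sp}$ with $\cap$ and inclusion is a monoid with identity $X$; (ii) $\Phi_{\sigma}$ sends thick ideals into $\mc{X}_{sp}$; (iii) $\Phi_{\sigma}$ is order preserving and sends $\bK$ to $X$; (iv) $\Phi_{\sigma}(\langle\bI\otimes\bJ\rangle)=\Phi_{\sigma}(\bI)\cap\Phi_{\sigma}(\bJ)$.

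For (i) I would first note that $\langle\bI\otimes\bJ\rangle$ is a thick ideal by the definition of $\langle-\rangle$, and that $\bK$ is a two-sided identity: since $\bJ$ is an ideal, $\bK\otimes\bJ\subseteq\bJ$, while $1\otimes B\cong B$ gives $\bJ\subseteq\bK\otimes\bJ$, so $\langle\bK\otimes\bJ\rangle=\bJ$, and symmetrically $\langle\bI\otimes\bK\rangle=\bI$. The one computation with any content is associativity, which I would reduce to the transporter identity $\langle\langle\mc{S}\rangle\otimes\bL\rangle=\langle\mc{S}\otimes\bL\rangle$ for a subset $\mc{S}\subseteq\bK$ and a thick ideal $\bL$ (together with its left-hand analogue). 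The inclusion $\supseteq$ is clear from monotonicity of $\langle-\rangle$; for $\subseteq$ I would show that $T:=\{A\in\bK:\ A\otimes B\in\langle\mc{S}\otimes\bL\rangle\ \text{for all}\ B\in\bL\}$ is a thick ideal containing $\mc{S}$ — using biexactness of $-\otimes B$ for the two-out-of-three and summand conditions, the fact that $\langle\mc{S}\otimes\bL\rangle$ is an ideal for stability of $T$ under left tensoring, and the fact that $\bL$ is an ideal for stability under right tensoring — so that $\langle\mc{S}\rangle\subseteq T$, which is exactly the claim. Feeding this back twice gives
\[
\langle\langle\bI\otimes\bJ\rangle\otimes\bL\rangle=\langle\bI\otimes\bJ\otimes\bL\rangle=\langle\bI\otimes\langle\bJ\otimes\bL\rangle\rangle .
\]
That $(\mc{X}_{sp},\cap,X)$ is an ordered monoid is immediate, since an intersection of two arbitrary unions of closed sets is again such a union.

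Steps (ii)--(iv) are then short appeals to material already in place. For (ii), the sentence just before \leref{phi-monoid} already records that $\Phi_{\sigma}(\mc{S})$ is specialization closed for any $\mc{S}\subseteq\bK$, so in particular $\Phi_{\sigma}(\bI)\in\mc{X}_{sp}$ for $\bI\in\ThickId(\bK)$. For (iii), order preservation of $\Phi_{\sigma}$ is immediate from its definition as a union, and $\Phi_{\sigma}(\bK)=X$ because $\sigma(1)=X$ by \deref{nc-support}(a) forces $X=\sigma(1)\subseteq\Phi_{\sigma}(\bK)\subseteq X$. For (iv) I would simply write the chain
\[
\Phi_{\sigma}(\langle\bI\otimes\bJ\rangle)=\Phi_{\sigma}(\bI\otimes\bJ)=\Phi_{\sigma}(\bI)\cap\Phi_{\sigma}(\bJ),
\]
the first equality being \leref{gamma-ideal} applied to the subset $\mc{S}=\bI\otimes\bJ$, and the second being \leref{phi-monoid}. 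Combining (i)--(iv) yields that $\Phi_{\sigma}$ is a morphism of ordered monoids.

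The only genuine obstacle is the associativity verification inside (i) — that is, the transporter identity $\langle\langle\mc{S}\rangle\otimes\bL\rangle=\langle\mc{S}\otimes\bL\rangle$; every other step is a direct citation of the two lemmas already proved or of the definitions, so I expect the write-up to be short.
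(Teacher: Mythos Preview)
Your proposal is correct and follows essentially the same route as the paper: the paper's proof consists of the single line that $\Phi_\sigma$ preserves inclusions and that $\Phi_{\sigma}(\langle\bI\otimes\bJ\rangle)=\Phi_{\sigma}(\bI)\cap\Phi_{\sigma}(\bJ)$ follows from \leref{phi-monoid} and \leref{gamma-ideal}, which is exactly your chain in step (iv). The difference is only one of thoroughness: you additionally verify that $(\ThickId(\bK),\langle-\otimes-\rangle,\bK)$ is genuinely an ordered monoid (in particular, associativity via the transporter argument) and that $\Phi_\sigma$ respects units, whereas the paper takes all of this for granted and records only the multiplicativity identity.
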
 
\begin{proof} Clearly, $\Phi_\sigma$ preserves inclusions. For every two thick ideals ideals $\bI$ and $\bJ$ of $\bK$, 
\[
\Phi_{\sigma}( \langle \bI \otimes \bJ \rangle ) = \Phi_{\sigma}(\bI) \cap \Phi_{\sigma}(\bJ),
\]
which follows from Lemmas \ref{lphi-monoid} and \ref{lgamma-ideal}.
\end{proof}

\subsection{Weak support data} \label{SS:weaksupport} We now replace condition (e) of support datum with the 
tensor product property on ideals to define the notion of weak support data. 

\bde{nc-w-support}
Let $\bK$ be a monoidal triangulated category and $\sigma$ a map $\bK \to \mc{X}$
. We will call $\sigma$ a (noncommutative) {\em weak support datum} if 
\begin{enumerate}
\item[(a)] $\sigma(0)=\varnothing$ and $ \sigma(1)= X$;
\item[(b)] $\sigma(A\oplus B)=\sigma(A)\cup \sigma(B)$, $\forall A, B \in \bK$; 
\item[(c)] $\sigma(\Sigma A)=\sigma(A)$, $\forall A \in \bK$; 
\item[(d)] If $A \to B \to C \to \Sigma A$ is a distinguished triangle, then $\sigma(A) \subseteq \sigma(B) \cup \sigma(C)$;
\item[(e)] $\Phi_{\sigma}(  \bI \otimes \bJ  ) = \Phi_{\sigma}(\bI) \cap \Phi_{\sigma}(\bJ)$ for all thick ideals $\bI$ and $\bJ$ of $\bK$
\end{enumerate}(recall \eqref{Phi}).
\ede
Note that, for any weak support datum $\sigma : \bK \to \mc{X}$ satisfying the additional condition that each $\Phi_\sigma (\langle A \rangle)$ is closed for any object $A$, and all thick ideals $\bI$ of $\bK$, 
$$\Phi_{\sigma}(\bI)= \bigcup_{A \in \bI} \Phi_{\sigma}(\langle A \rangle) \in \mc{X}_{sp}.$$ By \leref{phi-monoid}, a support datum is automatically a weak support datum. 

The following two lemmas provide information on ideals generated by objects in the category $\bK$. 

\ble{ideal-bi-bj-intersect}
If $\sigma : \bK \to \mc{X}$ is a weak support datum for an M$\Delta$C, $\bK$, and $\bI$ and $\bJ$ are thick ideals of $\bK$, then
$$\Phi_{\sigma}(\langle \bI \otimes \bJ \rangle)= \Phi_{\sigma} (\bI\otimes \bJ ) = \Phi_{\sigma} (\bI) \cap \Phi_{\sigma} (\bJ).$$
\ele

\begin{proof}
By assumption, $\Phi_{\sigma} (\bI \otimes \bJ) = \Phi_{\sigma} (\bI) \cap \Phi_{\sigma} (\bJ)$. Since every element of the set $\bI \otimes \bJ$ is in $\bI$, and in $\bJ$, 
we have $\langle \bI \otimes \bJ \rangle \subseteq \bI \cap \bJ$. Hence $\Phi_{\sigma} (\langle \bI \otimes \bJ \rangle) \subseteq \Phi_{\sigma} (\bI) \cap \Phi_{\sigma} (\bJ).$ 
It is also automatic that $\Phi_{\sigma} ( \bI \otimes \bJ) \subseteq \Phi_{\sigma} (\langle \bI \otimes \bJ \rangle)$. Hence, we have the commutative diagram
\begin{center}
\begin{tikzcd}
\Phi_{\sigma} (\bI \otimes \bJ) \arrow[rd, hook] \arrow[rr] &                                                        & \Phi_{\sigma} (\bI) \cap \Phi_{\sigma} (\bJ) \arrow[ll, "="] \\
                                                  & \Phi_{\sigma} (\langle \bI \otimes \bJ \rangle) \arrow[ru, hook] &                                            
\end{tikzcd}
\end{center}
which gives the statement of the lemma.
\end{proof}

\ble{dist-triang-weak}
Suppose $A \to B \to C \to \Sigma A$ is a distinguished triangle in an M$\Delta$C, $\bK$, and $\sigma$ a weak support datum. Then
$\Phi_{\sigma} ( \langle A \rangle )  \subseteq \Phi_{\sigma} ( \langle B \rangle) \cup \Phi_{\sigma} (\langle C \rangle).$
\ele

\begin{proof}
Define $$\bI= \{ M \in \langle A \rangle : \Phi_{\sigma} ( \bK \otimes M \otimes \bK) \subseteq \Phi_{\sigma} (\langle B \rangle) \cup \Phi_{\sigma} (\langle C \rangle)\}.$$
We will show that $\bI$ is a thick ideal which contains $A$; since it is contained in $\langle A \rangle$ by definition, it is therefore equal to $\langle A \rangle$. 

Suppose $M$ is in $\bI$, and let $X$ and $Y$ two arbitrary objects of $\bK$. We have 
$X \otimes \Sigma M \otimes Y \cong \Sigma ( \Sigma^{-1}(X) \otimes M \otimes \Sigma^{-1}(Y)),$ 
and hence $\sigma ( X \otimes \Sigma M \otimes Y)= \sigma( \Sigma^{-1}(X) \otimes M \otimes \Sigma^{-1}(Y)) \subseteq \Phi_{\sigma} (\langle B \rangle) \cup \Phi_{\sigma} (\langle C \rangle)$, 
showing that $\Sigma(M) \in \bI$. 

Let $K \to L \to M \to \Sigma K$ be a distinguished triangle with $L$ and $M$ in $\bI$. Then by the exactness of the tensor product,
$X \otimes K \otimes Y \to X \otimes L \otimes Y \to X \otimes M \otimes Y \to X \otimes \Sigma K \otimes Y$ is a distinguished triangle.
Hence,
$$\sigma( X \otimes K \otimes Y) \subseteq \sigma( X \otimes L \otimes Y) \cup \sigma(X \otimes M \otimes Y) \subseteq \Phi_{\sigma} (\langle B \rangle) \cup \Phi_{\sigma} (\langle C \rangle).$$
Therefore, $K$ is in $\bI$.

Suppose $M \oplus N$ is in $\bI$. Then $\sigma(X \otimes M \otimes Y) \subseteq \sigma(X \otimes (M \oplus N) \otimes Y) \subseteq \Phi_{\sigma} (\langle B \rangle) \cup \Phi_{\sigma} 
(\langle C \rangle),$ and so $M$ is in $\bI$ (and likewise, so is $N$).

It is clear from the definition of $\bI$ that it is closed under tensoring on the right and left. By exactness of the tensor product, $\bI$ contains $A$. Thus, 
$\bI$ is a thick subideal of $\langle A \rangle$ which contains $A$, and hence, $\bI = \langle A \rangle$.
\end{proof}

\subsection{The final weak support datum} Using the final weak support data one can identify the Balmer spectrum for $\bK$. 

\bth{f-weak}
Suppose that $\bK$ is an M$\Delta$C and $\sigma : \bK \to \mc{X}$ is a weak support datum satisfying the additional condition that $\Phi_{\sigma} (\langle A \rangle)$ is closed for every object $A$ of $\bK$. 
Then there is a unique continuous map $f_\sigma: X \to \Spc \bK$ satisfying $\Phi_{\sigma}(\langle A \rangle)= f_\sigma^{-1}(V(A)),$ for all $A \in \bK$. 
Explicitly, this map is defined by
$$
f_\sigma(x)= \{ A \in \bK : x \not \in \Phi_{\sigma} (\langle A \rangle) \}
\quad
\mbox{for} 
\quad x \in X.
$$
\eth

\begin{proof}
The uniqueness of this map follows directly from \leref{f1-f2-agree}. The continuity will follow from the claimed formula for $f_\sigma^{-1}(V(A))$, since $\Phi_{\sigma} (\langle A \rangle)$ is closed by definition. We need to verify that $f_\sigma(x)$ is a prime ideal, and that $f^{-1}_\sigma(V(A))$ has the formula that has been claimed.

Since $\langle M \rangle = \langle \Sigma M \rangle$, we clearly have $M$ in $f_\sigma(x)$ 
if and only if $\Sigma M$ in $f_\sigma(x)$. If $A \to B \to C \to \Sigma A$ is a distinguished triangle with $B$ and $C$ in $f_\sigma(x)$, 
then by \leref{dist-triang-weak} we have $A$ in $f_\sigma(x)$. If $M \oplus N$ is in $f_\sigma(x)$, then since $M \in \langle M \oplus N \rangle$, we have 
$\Phi_{\sigma} (\langle M \rangle) \subseteq \Phi_{\sigma} (\langle M \oplus N \rangle)$, and so $M \in f_\sigma(x)$ (and likewise for $N$). Suppose 
$M \in f_\sigma(x)$ and $N$ is any object. Then since $\langle M \otimes N \rangle \subseteq \langle M \rangle$, we have 
$\Phi_{\sigma} (\langle M \otimes N \rangle) \subseteq \Phi_{\sigma} (\langle M \rangle)$ and hence $M \otimes N \in f_\sigma(x)$ (and likewise for $N \otimes M$). Hence, $f_\sigma(x)$ is a thick ideal.

Now, suppose that there are thick ideals $\bI$ and $\bJ$ with $\bI \otimes \bJ \subseteq f_\sigma(x)$. Then for each $X \in \bI$ and $Y \in \bJ$, 
$x \not \in \Phi_{\sigma}( \langle X \otimes Y \rangle)$. But now we can observe that
\begin{align*}
\bigcup_{X \in \bI, Y \in \bJ} \Phi_{\sigma} ( \langle X \otimes Y \rangle) & \supseteq \Phi_{\sigma} ( \bI \otimes \bJ) = \Phi_{\sigma} (\bI) \cap \Phi_{\sigma} (\bJ),
\end{align*}
and therefore $x \not \in \Phi_{\sigma} (\bI) \cap \Phi_{\sigma} (\bJ)$. Therefore, one of $\bI$ and $\bJ$ must be in $f_\sigma(x)$. Therefore, $f_\sigma(x)$ is a prime ideal.
Last, we verify that $f_\sigma^{-1}(V(A))= \Phi_{\sigma} (\langle A \rangle)$:
\begin{align*}
f_\sigma^{-1} (V(A)) &= \{ x: f_\sigma(x) \in V(A)\} = \{ x : A \not \in f_\sigma(x)\} \\ 
&= \{x : x \in \Phi_{\sigma} (\langle A \rangle)\} = \Phi_{\sigma} (\langle A \rangle ).
\end{align*}
\end{proof}

\section{A noncommutative Hopkins' Theorem}
\label{SS:noncomm-Hopkins}
In this section we prove a generalization of Hopkins' Theorem which will be used in next section for our first approach to the explicit description of the
(noncommutative) Balmer spectrum of an M$\Delta$C as a topological space.

\subsection{Compactly generated tensor triangulated categories}\label{SS:supportdata} 
We say that a monoidal triangulated category $\bK$ is a {\em{compactly generated}} if 
$\bK$ is closed under arbitrary set indexed coproducts, 
the  tensor  product  preserves  set  indexed  coproducts, $\bK$ is compactly generated as a triangulated category, 
the tensor product of compact objects is compact, $1$ is a compact object, and every compact object is rigid (cf. \cite[Definition 2.10.11]{EGNO1}). 
In particular, $\bK^c$ is an M$\Delta$C on its own. 

In this context, given any subset $\mc{S}$ of $\bK^c$, the notation 
$\langle \mc{S} \rangle$ will refer to the thick two-sided ideal of $\bK^c$ generated by $\mc{S}$, whereas if $\mc{S}$ is any subset of $\bK$, then the notation $\llangle \mc{S} \rrangle$ will refer to the thick two-sided ideal of $\bK$ generated by $\mc{S}$.

Recall that for an M$\Delta$C, $\bK$, and a map $\sigma:\bK \to \mc{X}$, the map $\Phi_{\sigma}$ from subsets of objects of $\bK$ to $\mc{X}$ is defined by 
\eqref{Phi}. At many points in this section and the sequel we will be interested in weak support data 
which satisfy the following two conditions: 
\begin{align} 
&\mbox{ $\Phi_\sigma(\llangle M \rrangle)=\varnothing$ if and only if $M=0$, $\forall M \in \bK$ (Faithfulness Property)}; 
\label{EE:supportone} 
\\
&\mbox{For any $W\in {\mathcal X}_{cl}$ there exists  $M\in \bK^{c}$ such that $\Phi_\sigma(\langle M\rangle)=W$ (Realization} 
\label{EE:supporttwo}
\\
&\mbox{Property).} 
\nonumber
\end{align} 

By rigidity, it follows that there exists an exact contravariant duality functor 
$(-)^{*}:\bK^{c} \rightarrow \bK^{c}$ such that 
\begin{equation} \label{eq:duality}
\bK(N\otimes M, Q)=\bK(N, Q \otimes M^{*})
\end{equation}
for $M\in \bK^{c}$ and $N,Q\in \bK$ \cite[Proposition 2.10.8]{EGNO1}. There are evaluation and coevaluation maps
\[
\ev: V^* \otimes V \to 1,
\]
\[
\coev: 1 \to V \otimes V^*,
\] 
such that the compositions
\begin{equation}
\label{E:ev-coev1}
V \xrightarrow{\coev \otimes \id} V \otimes V^* \otimes V \xrightarrow{ \id \otimes \ev} V
\end{equation}
and 
\begin{equation}
\label{E:ev-coev2}
V^* \xrightarrow{\id \otimes \coev} V^* \otimes V \otimes V^* \xrightarrow{\ev \otimes \id} V^*
\end{equation}
are the identity maps on $V$ and $V^*$, respectively.

\begin{lemma} \label{dual-ideal}
For any object $V$ of an M$\Delta$C,  $\langle V \rangle = \langle V^* \rangle$.
\end{lemma}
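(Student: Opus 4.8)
**Proof plan for Lemma \ref{dual-ideal}: $\langle V\rangle = \langle V^*\rangle$.**

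The plan is to show the two inclusions separately; by the symmetry $(V^*)^* \cong V$ (a consequence of rigidity in a compact, compactly generated M$\Delta$C), it actually suffices to prove $V^* \in \langle V\rangle$, since applying this to $V^*$ in place of $V$ gives $V = (V^*)^* \in \langle V^*\rangle$, and then $\langle V\rangle \subseteq \langle V^*\rangle$ and $\langle V^*\rangle \subseteq \langle V\rangle$ together force equality. So the whole lemma reduces to the single statement: $V^*$ lies in the thick two-sided ideal generated by $V$.

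To see $V^* \in \langle V\rangle$, I would use the evaluation/coevaluation maps recorded in \eqref{E:ev-coev1}--\eqref{E:ev-coev2}. The composition in \eqref{E:ev-coev2},
\[
V^* \xrightarrow{\id \otimes \coev} V^* \otimes V \otimes V^* \xrightarrow{\ev \otimes \id} V^*,
\]
is the identity on $V^*$. This exhibits $V^*$ as a retract (direct summand) of $V^* \otimes V \otimes V^*$: the first map is a split monomorphism and the second a split epimorphism, since their composite is $\id_{V^*}$. Now $V^* \otimes V \otimes V^*$ is obtained from $V$ by tensoring on the left with $V^*$ and on the right with $V^*$, hence it lies in the two-sided thick ideal $\langle V\rangle$ (which is by definition closed under left and right tensoring with arbitrary objects of $\bK^c$, and $V^* \in \bK^c$ by rigidity). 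Since $\langle V\rangle$ is thick, it is closed under direct summands, so $V^* \in \langle V\rangle$, as desired.

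The argument is short and the only point requiring care is the identification of a retract as a direct summand in a triangulated category: an idempotent that splits does produce a summand, but here we do not even need idempotent-completeness arguments, because $e := (\id\otimes\coev)\circ(\ev\otimes\id)$... — more cleanly, a morphism $s\colon V^*\to W$ with a left inverse $r\colon W\to V^*$ (so $rs=\id$) makes $V^*$ a direct summand of $W$ in \emph{any} additive category, with complement the image of the idempotent $sr$, which splits precisely because $W$ lives in the idempotent-complete category $\bK^c$ (compact objects in a compactly generated triangulated category form a thick, hence idempotent-complete, subcategory). I expect this bookkeeping about summands versus retracts to be the only genuine obstacle; everything else is a direct application of the defining closure properties of thick two-sided ideals together with the triangle identities for the duality.
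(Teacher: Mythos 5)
Your core argument---that the triangle identity \eqref{E:ev-coev2} exhibits $V^*$ as a retract, hence a direct summand, of $V^*\otimes V\otimes V^*\in\langle V\rangle$, and that thickness then gives $V^*\in\langle V\rangle$---is exactly the paper's argument. However, the opening reduction contains a gap: the isomorphism $(V^*)^*\cong V$ is \emph{not} a consequence of rigidity in a general M$\Delta$C. Rigidity only provides a left dual $V^*$ and a right dual ${}^*V$ with ${}^*(V^*)\cong V$ and $({}^*V)^*\cong V$; the double left dual $V^{**}$ need not be isomorphic to $V$ unless one has extra structure such as a pivotal isomorphism (or a Radford-type formula, as in $\Rep(H)$). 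Without it, applying your single inclusion to $V^*$ only yields $V^{**}\in\langle V^*\rangle$ and hence a descending chain $\langle V\rangle\supseteq\langle V^*\rangle\supseteq\langle V^{**}\rangle\supseteq\cdots$, not the claimed equality. The repair is immediate and is precisely what the paper does: also invoke the companion triangle identity \eqref{E:ev-coev1}, which exhibits $V$ as a retract of $V\otimes V^*\otimes V\in\langle V^*\rangle$, giving the reverse inclusion directly with no appeal to double duals.

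On your side remark about retracts versus summands: this is resolved more cleanly than by invoking idempotent-completeness of $\bK^c$. In \emph{any} triangulated category, a split monomorphism $s\colon X\to Y$ (with $rs=\id_X$) forces the connecting morphism $w\colon Z\to\Sigma X$ of the triangle on $s$ to satisfy $\Sigma s\circ w=0$; since $\Sigma s$ is itself split mono, $w=0$, and a triangle with zero connecting map splits, giving $Y\cong X\oplus Z$. This is the ``Splitting Lemma for triangulated categories'' the paper cites, and it requires no idempotent-completeness, so the lemma holds verbatim whenever $V$ is rigid in an M$\Delta$C, not just when $V$ lives in the compact part of a compactly generated one.
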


\begin{proof}
By the Splitting Lemma for triangulated categories, if a morphism $X \xrightarrow{f} Y$ is a retraction, i.e. there exists a map $Y \xrightarrow{g} X$ such that $g \circ f = \id_X$, then $X$ is a direct summand of $Y$. Since the compositions (\ref{E:ev-coev1}) and (\ref{E:ev-coev2}) are the identity morphisms, $V$ is a direct summand of $V \otimes V^* \otimes V$ and $V^*$ is a direct summand of $V^* \otimes V \otimes V^*$. Hence, $V$ is in $\langle V^* \rangle$, and $V^*$ is in $\langle V \rangle.$
\end{proof}

\subsection{Localization/colocalization functors} 
\label{loc-fun}
Even though our goal is to classify ideals in a compact M$\Delta$C, we will need to use the localization and colocalization 
functors as given in \cite[Section 3]{BIK1} to construct non-compact objects that will be used in the theory.  In particular, we will employ the following facts stated in \cite[Theorem 3.1.1, Lemma 3.1.2]{BKN1}. 

\begin{theorem} \label{T:localizationtriangles} Let $\bK$ be a compactly generated triangulated category, $\bC$ be a thick subcategory of $\bK^{c}$ and $M$ an object of $\bK$. 
\begin{itemize} 
\item[(a)] There exists a functorial triangle in $\bK$,
$$
\Gamma_{\bC}(M) \to M \to L_{\bC}(M) \to
$$
which is unique up to isomorphism, such that $\Gamma_{\bC}(M)$ is in $\Loc(\bC)$ and there are no non-zero maps in $\bK$ from $\bC$ or, equivalently, from $\Loc(\bC)$ to $L_{\bC}(M)$.
\item[(b)] $M \in \Loc(\bC)$ if and only if $\Gamma_{\bC}(M)\cong M$.
\end{itemize} 
\end{theorem}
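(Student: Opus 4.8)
The statement is the standard Bousfield (co)localization with respect to the localizing subcategory generated by a set of compact objects, so the plan is to extract the triangle from an adjunction and then verify every required property formally. First I would set $\bS := \Loc(\bC)$ and note that, since $\bK$ is compactly generated, the isomorphism classes of objects of $\bK^{c}$ — hence of $\bC$ — form a set, so $\bS = \Loc(\mc{G})$ for a set $\mc{G}$ of representatives. By Neeman's Brown representability theorem applied to the well-generated category $\bS$ — or, since $\bC \subseteq \bK^{c}$, by the explicit cell-attachment construction, where one sets $M^{0} = M$ and inductively takes $M^{n+1}$ to be the cone of $\coprod \Sigma^{j} G \to M^{n}$ (coproduct over all $G \in \mc{G}$ and all morphisms into $M^{n}$), then lets $L_{\bC}(M)$ be the homotopy colimit — the inclusion $i : \bS \hookrightarrow \bK$ admits a right adjoint $i^{!}$. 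I would then put $\Gamma_{\bC} := i \circ i^{!}$, let $\epsilon : \Gamma_{\bC} \Rightarrow \id_{\bK}$ be the counit, and complete $\epsilon_{M}$ to a distinguished triangle
\[
\Gamma_{\bC}(M) \xrightarrow{\epsilon_{M}} M \longrightarrow L_{\bC}(M) \longrightarrow \Sigma\Gamma_{\bC}(M);
\]
by construction $\Gamma_{\bC}(M) = i(i^{!}M) \in \bS = \Loc(\bC)$.

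Next I would check the orthogonality. For $N \in \bS$, full faithfulness of $i$ together with the adjunction makes $(\epsilon_{M})_{*} : \Hom_{\bK}(N, \Gamma_{\bC}(M)) \to \Hom_{\bK}(N, M)$ an isomorphism; feeding the triangle above into $\Hom_{\bK}(N, -)$, the long exact sequence then forces $\Hom_{\bK}(N, L_{\bC}(M)) = 0$. Specializing $N$ to objects of $\bC$ gives ``no nonzero maps from $\bC$'' to $L_{\bC}(M)$; conversely, the full subcategory $\{N : \Hom_{\bK}(N, L_{\bC}(M)) = 0\}$ is closed under shifts, cones and arbitrary coproducts, hence localizing, and contains $\bC$, so it contains $\Loc(\bC)$ — this is the ``equivalently'' clause in (a).

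For uniqueness, given a second triangle $\Gamma' \to M \to L' \to \Sigma\Gamma'$ with $\Gamma' \in \Loc(\bC)$ and $\Hom_{\bK}(N,L') = 0$ for all $N\in\Loc(\bC)$, the composite $\Gamma' \to M \to L_{\bC}(M)$ vanishes (its target is orthogonal to $\Gamma' \in \bS$), so $M \to L_{\bC}(M)$ factors through $M \to L'$, uniquely since $\Hom_{\bK}(\Sigma\Gamma', L_{\bC}(M)) = 0$; the symmetric factorization together with this uniqueness makes the two comparison maps between $L'$ and $L_{\bC}(M)$ mutually inverse, and a morphism-of-triangles plus five-lemma argument upgrades this to $\Gamma' \cong \Gamma_{\bC}(M)$, compatibly with the maps to and from $M$. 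The same uniqueness of factorizations shows that every $f : M \to M'$ extends to a unique morphism of the associated triangles, so $\Gamma_{\bC}$ and $L_{\bC}$ are functors and the triangle is natural in $M$.

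Finally, for (b): if $M \in \Loc(\bC)$, then $L_{\bC}(M)$ is the cone of a map between objects of the triangulated subcategory $\Loc(\bC)$, hence lies in $\Loc(\bC)$, so $\id_{L_{\bC}(M)} \in \Hom_{\bK}(L_{\bC}(M), L_{\bC}(M)) = 0$ by orthogonality, forcing $L_{\bC}(M) = 0$ and $\epsilon_{M}$ an isomorphism; conversely, $\Gamma_{\bC}(M) \cong M$ puts $M$ in $\Loc(\bC)$ since $\Gamma_{\bC}(M)$ always does. The one genuinely nontrivial input is the very first step — the existence of $i^{!}$, equivalently the convergence of the cell-attachment process to an $L_{\bC}(M)$ right-orthogonal to $\bC$; I expect this to be the main obstacle, while orthogonality, uniqueness and functoriality are purely formal once the adjunction is in hand.
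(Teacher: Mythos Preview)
Your argument is correct and is exactly the standard Bousfield localization package: existence of the right adjoint to the inclusion $\Loc(\bC)\hookrightarrow\bK$ via Brown representability (or the equivalent small-object/cell-attachment argument using that $\bC$ is a set of compact objects), completion of the counit to a triangle, orthogonality from the adjunction, and uniqueness/functoriality from the resulting semi-orthogonal decomposition. Part (b) then follows formally just as you wrote.

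There is nothing to compare with in the paper itself: the authors do not give a proof of this theorem but simply import it from the literature, citing \cite[Theorem 3.1.1, Lemma 3.1.2]{BKN1} (which in turn rests on the work of Benson--Iyengar--Krause \cite{BIK1} and Neeman). So you have supplied a proof where the paper supplies only a reference; the route you chose is the expected one and matches what those references do.
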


\subsection{Hopkins' Theorem} For an object $M\in\bK^{c}$,  recall that $\langle M \rangle \subseteq \bK^{c}$ denotes the thick tensor ideal in $\bK^{c}$ generated by $M$, whereas $\llangle M \rrangle$ will denote the thick tensor ideal in $\bK$ generated by $M$.  
The following result is a generalization of the theorem presented in \cite[Theorem 3.3.1]{BKN1}. Recall the definition of extended weak support datum from Section~\ref{supp}. 

\begin{theorem} \label{T:Hopkinstheorem} Let $\bK$ be a compactly generated M$\Delta$C and 
$\sigma : \bK \to \mc{X}$ be an extended weak support datum satisfying the faithfulness condition (\ref{EE:supportone})
for a Zariski space $X$.

Fix an object $M\in\bK^{c}$, and set $Y:= \Phi_{\sigma}(\langle M \rangle)$ (defined in \eqref{Phi}).
Then 
\[
\bI_{Y}=\langle M \rangle \quad \mbox{where} \quad \bI_{Y}=\{N\in \bK^{c} : \Phi_{\sigma}(\langle N\rangle)\subseteq Y \}.
\]  
\end{theorem}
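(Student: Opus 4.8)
The plan is to prove the two inclusions $\langle M \rangle \subseteq \bI_Y$ and $\bI_Y \subseteq \langle M \rangle$ separately. The first inclusion is the easy direction: if $N \in \langle M \rangle$, then $\langle N \rangle \subseteq \langle M \rangle$, so $\Phi_\sigma(\langle N \rangle) \subseteq \Phi_\sigma(\langle M \rangle) = Y$ by the order-preserving property of $\Phi_\sigma$, hence $N \in \bI_Y$. (One also checks directly that $\bI_Y$ is a thick two-sided ideal of $\bK^c$, using properties (b), (c), (d) of an extended weak support datum together with Lemma \ref{ldist-triang-weak} for the triangle axiom and the inclusion $\langle N \otimes P \rangle \subseteq \langle N \rangle$ for the ideal condition; this makes $\bI_Y$ a legitimate competitor.)

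The substance is the reverse inclusion $\bI_Y \subseteq \langle M \rangle$. First I would fix $N \in \bK^c$ with $\Phi_\sigma(\langle N \rangle) \subseteq Y$, and apply the localization/colocalization machinery of Theorem~\ref{T:localizationtriangles} with $\bC = \langle M \rangle$ (a thick subcategory of $\bK^c$): this gives a functorial triangle $\Gamma_{\langle M\rangle}(N) \to N \to L_{\langle M \rangle}(N) \to$ with $\Gamma_{\langle M \rangle}(N) \in \Loc(\langle M \rangle)$ and no nonzero maps from $\Loc(\langle M \rangle)$ to $L_{\langle M \rangle}(N)$. The goal is to show $L_{\langle M \rangle}(N) = 0$, for then $N \cong \Gamma_{\langle M \rangle}(N) \in \Loc(\langle M \rangle)$, and since $N$ is compact and $\langle M \rangle$ is generated by a compact object, a standard compactness/thick-subcategory argument (compact objects in $\Loc(\bC)$ lie in $\bC$ when $\bC$ is a thick subcategory of $\bK^c$ generated by compacts — this is the content underlying \cite[Theorem 3.3.1]{BKN1}) forces $N \in \langle M \rangle$. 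To show $L_{\langle M \rangle}(N) = 0$, I would invoke the faithfulness property \eqref{EE:supportone}: it suffices to prove $\Phi_\sigma(\llangle L_{\langle M \rangle}(N)\rrangle) = \varnothing$.

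The key computation is therefore to control the support of $L := L_{\langle M \rangle}(N)$. On one hand, from the triangle $\Gamma_{\langle M\rangle}(N) \to N \to L \to$ and property (d) (extended to the $\Phi_\sigma$-level), one gets $\Phi_\sigma(\llangle L \rrangle) \subseteq \Phi_\sigma(\llangle N\rrangle) \cup \Phi_\sigma(\llangle \Sigma\Gamma_{\langle M\rangle}(N)\rrangle)$; combined with the reverse triangle this should pin $\Phi_\sigma(\llangle L\rrangle)$ inside $\Phi_\sigma(\llangle N \rrangle) \subseteq Y$ (here one uses that $\Phi_\sigma(\langle N\rangle) \subseteq Y$ for the compact object extends to the big ideal via the extended-coproduct axiom (ii')). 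On the other hand, because there are no nonzero maps from $\Loc(\langle M\rangle) \supseteq \langle M \rangle$ into $L$, and using rigidity/duality \eqref{eq:duality} to convert $\Hom(M \otimes -, L)$-type vanishing into a statement about $M \otimes L$ or $M^* \otimes L$, one argues that $L$ is "$Y$-local" in the complementary sense — i.e. the generators $M$ realizing the closed set $Y$ see nothing of $L$ — so that $\Phi_\sigma(\llangle L\rrangle) \cap Y = \varnothing$. Putting the two together gives $\Phi_\sigma(\llangle L \rrangle) = \varnothing$, hence $L = 0$. I expect the main obstacle to be this last point: making rigorous, in the noncommutative two-sided setting, the passage from "no maps from $\langle M \rangle$ to $L$" to a genuine \emph{support-theoretic} emptiness statement for $L$, since the usual commutative argument uses the tensor-idempotent $\Gamma_Y \unit$ and the projection formula, and here one must instead work with the two-sided ideal $\llangle L \rrangle$, use Lemma~\ref{ldual-ideal} to move between $M$ and $M^*$, and exploit the weak-support tensor-product identity (e) on ideals rather than an object-wise projection formula. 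The extended coproduct axiom (ii') and the fact that $\langle M \rangle$ is generated by the single compact $M$ should be exactly what is needed to run the colimit/Milnor-sequence argument that identifies $\Gamma_{\langle M\rangle}(N)$ with a homotopy colimit built from $M$, keeping everything compatible with $\Phi_\sigma$.
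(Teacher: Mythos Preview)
Your plan is correct and is essentially the paper's proof. The paper applies the triangle $\Gamma_{\bI'}\to\Id\to L_{\bI'}$ to $\Gamma_{\bI_Y}(N)$ rather than to $N$ directly, but for $N\in\bI_Y$ one has $\Gamma_{\bI_Y}(N)\cong N$, so this is only an organizational difference; the duality step $0=\bK(S\otimes M\otimes Q,\,L)\cong\bK(S,\,L\otimes Q^*\otimes M^*)$ for all compact $S,Q$, followed by \leref{MTN}, condition~(e), and Lemma~\ref{dual-ideal}, is exactly how the paper turns ``no maps from $\langle M\rangle$'' into $\Phi_\sigma(\llangle L\rrangle)\cap Y=\varnothing$, and Neeman's lemma is the finishing step you cite.

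One small sharpening: your justification that $\Phi_\sigma(\llangle L\rrangle)\subseteq Y$ ``via the triangle and (ii$'$)'' is imprecise. Axiom (ii$'$) alone does not control $\sigma(N\otimes B)$ for noncompact $B$. The clean reason (implicit in the paper's phrasing) is that $\Loc(\bI_Y)$ is a localizing \emph{tensor} ideal of $\bK$ and every object of $\Loc(\bI_Y)$ has $\sigma$-value in $Y$ (the latter uses (ii$'$) together with (c),(d),(b)); since $N\in\bI_Y$ and $\Gamma_{\langle M\rangle}(N)\in\Loc(\langle M\rangle)\subseteq\Loc(\bI_Y)$, the triangle forces $L\in\Loc(\bI_Y)$, hence $\llangle L\rrangle\subseteq\Loc(\bI_Y)$ and $\Phi_\sigma(\llangle L\rrangle)\subseteq Y$. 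The paper's double localization $L_{\bI'}\Gamma_{\bI}(N)$ is precisely the device that places the object visibly inside $\Loc(\bI_Y)$ from the outset.
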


\begin{proof} Let $\bI=\bI_{Y}$ and $ \bI'=\langle M \rangle$. By definition $\bI'=\langle M \rangle$ is the smallest thick (two-sided) tensor ideal of $\bK^{c}$ containing $M$, so 
it follows that $\bI \supseteq \bI'$.

Now let $N\in\bK$, and apply the exact triangle of functors $\Gamma_{\bI'}\to\text{Id} \to L_{\bI'}\to\ $ to $\Gamma_{\bI}(N)$:
\begin{equation} \label{E:ExactTriangle}
\Gamma_{\bI'}\Gamma_{\bI}(N) \to \Gamma_{\bI}(N) \to L_{\bI'} \Gamma_{\bI}(N) \to
\end{equation}
One can conclude that $\sigma(L_{\bI'}\Gamma_{\bI}(N)) \subseteq Y$ because (i) the first term belongs to $\Loc(\bI') \subseteq \Loc(\bI)$ (since $\bI' \subseteq \bI$) and 
(ii) $\Gamma_{\bI}(N)$ belongs to the triangulated subcategory $\Loc(\bI)$. 

According to Theorem~\ref{T:localizationtriangles}(a) there are no non-zero maps from $\bI'$ to $L_{\bI'}\Gamma_{\bI}(N)$. 
Consequently, for any $S, Q \in\bK^{c}$, one can use (\ref{eq:duality}) to show that   
\begin{equation} \label{E:ExactNTriangle}
0=\bK(S\otimes M \otimes Q, L_{\bI'}\Gamma_{\bI}(N)) \cong \bK(S,  L_{\bI'}\Gamma_{\bI}(N) \otimes Q^* \otimes M^*).
\end{equation}
Since $\bK$ is compactly generated it follows that 
$L_{\bI'}\Gamma_{\bI}(N) \otimes Q^* \otimes M^*=0$ in $\bK$. Hence $L_{\bI'}\Gamma_{\bI}(N) \otimes \bK^c \otimes M^*=0$, and since one can find a set of compact objects $\mc{C}$ with $\Loc(\mc{C})= \bK$, this implies $L_{\bI'}\Gamma_{\bI}(N) \otimes \bK \otimes M^*=0$. One can now conclude the following: 
\begin{align*}
\varnothing &= \Phi_{\sigma}(\llangle L_{\bI'}\Gamma_{\bI}(N)  \otimes \bK \otimes M^*\rrangle) \\
  &= \Phi_{\sigma}(\llangle L_{\bI'}\Gamma_{\bI}(N) \rrangle \otimes \llangle M^*\rrangle)\\
  &=  \Phi_{\sigma}(\llangle L_{\bI'}\Gamma_{\bI}(N) \rrangle) \cap \Phi_{\sigma}(\llangle M \rrangle )  \\
  &\supseteq  \Phi_{\sigma}(\llangle L_{\bI'}\Gamma_{\bI}(N) \rrangle) \cap \Phi_{\sigma}(\langle M \rangle )  \\
  &=  \Phi_{\sigma}(\llangle L_{\bI'}\Gamma_{\bI}(N) \rrangle) \cap Y \\
  &= \Phi_{\sigma}(\llangle L_{\bI'}\Gamma_{\bI}(N) \rrangle).
\end{align*}
The second equality is an application of \leref{MTN}. The third equality uses condition (v) in \deref{nc-w-support}. Therefore, by (\ref{EE:supportone}), 
$L_{\bI'}\Gamma_{\bI}(N) =0$ in $\bK$, and it follows that $\Gamma_{\bI}(N) \cong \Gamma_{\bI'}\Gamma_{\bI}(N)$ via (\ref{E:ExactTriangle}). 

Finally, consider $N\in\bI$. Then by Theorem~\ref{T:localizationtriangles}(b), $\Gamma_{\bI}(N) \cong N$, thus $\Gamma_{\bI'}(N) \cong N$ 
and $N\in\operatorname{Loc}\left( \bI'\right)$. Now by \cite[Lemma 2.2]{Neeman3} we see that in fact $N \in \bI'$. Consequently, $\bI\subseteq\bI'$.
\end{proof}

\section{Classifying thick (two-sided) ideals and Balmer's spectrum of an M$\Delta$C} \label{SS:classifying} 
In this section we present a method for the classification of the thick (two-sided) ideals of an M$\Delta$C
and our first approach towards the explicit description of the Balmer spectrum of an M$\Delta$C as a topological space.
They are based on the use of a weak support datum having the faithfulness and realization properties \eqref{EE:supportone}--\eqref{EE:supporttwo}.
\subsection{The map $\Theta$} Let $\bK$ be a compactly generated M$\Delta$C, and  $\sigma: \bK^{c} \to \XX$ be a weak support datum.  Denote by $\Theta_\sigma$
the map from specialization-closed subsets of $X$ to subsets of $\bK^c$ given by  
\begin{equation}
\label{Theta}
\Theta_\sigma(W)=\{M \in \bK^c : \Phi_{\sigma}(\langle M \rangle) \subseteq W\}.
\end{equation}
The following result verifies that $\Theta_\sigma(W)$ is a thick tensor ideal. 

\bpr{theta-w-ideal} Let $\sigma: \bK^{c} \to \XX$ be a weak support datum for a compactly generated M$\Delta$C, $\bK$.
For any $W \in \mc{X}_{sp}$, $\Theta_\sigma(W)$ is a thick tensor ideal of $\bK^c$.
\epr

\begin{proof}
Since $\langle M \rangle = \langle \Sigma M \rangle$, we have $M \in \Theta_\sigma(W)$ if and only if $\Sigma M \in \Theta_\sigma(W)$. Suppose $M \oplus N \in \Theta_\sigma(W)$. 
Then since $M$ and $N$ are in $\langle M \oplus N \rangle,$ it follows that $M$ and $N$ are in $\Theta_\sigma(W)$. If $A \to B \to C \to \Sigma A$ is a distinguished triangle with 
$B$ and $C$ in $\Theta_\sigma(W)$, then by \leref{dist-triang-weak}, $\Phi_{\sigma}(\langle A \rangle) \subseteq \Phi_{\sigma}(\langle B \rangle) \cup \Phi_{\sigma}(\langle C \rangle)$ 
and so $A \in \Theta_\sigma(W)$. If $M$ is in $\Theta_\sigma(W)$, 
then since $N \otimes M$ and $M \otimes N$ are both in $\langle M \rangle$, we have $N$ and $M$ in $\Theta_\sigma(W)$.
\end{proof}

Suppose $S$ is any subset of the topological space $X$. Denote by 
\begin{equation}
\label{Ssp}
{\mbox{\em{$S_{sp}$ the largest specialization-closed set contained in $S$}}}.
\end{equation}
That is, $S_{sp}$ is the union of all closed sets contained in $S$. With this definition, we can describe the image of $f_\sigma$. 

\bpr{f-theta}
Suppose $\bK$ is a compactly generated M$\Delta$C with a weak support datum $\sigma: \bK^{c} \to \XX$ such that $\Phi_\sigma (\langle C \rangle)$ is closed for every compact object $C$. Then the map $f_\sigma: X \to \Spc \bK^c$ defined in \thref{f-weak} associated to the restriction of $\sigma$ to $\bK^c$
satisfies $f_\sigma(x)= \Theta_\sigma( ( X \backslash \{x\})_{sp}),$ $\forall x \in X$.
\epr

\begin{proof}
Observe that 
\begin{align*}
f_\sigma(x)&= \{ M \in \bK^c : x \not \in \Phi_{\sigma}(\langle M \rangle) \}\\
&= \{ M \in \bK^c: \Phi_{\sigma}(\langle M \rangle) \subseteq X \backslash \{x\}\}\\
&= \{ M \in \bK^c: \Phi_{\sigma}(\langle M \rangle) \subseteq (X \backslash \{x\})_{sp}\}= \Theta_\sigma( (X \backslash \{x\})_{sp}).
\end{align*}
\end{proof}
We end this subsection by recording a useful fact that will be used later. 

\ble{x-minus-x-unique}
Suppose $X$ is a Zariski space. Then, for all $x, y \in X$, 
\[
(X \backslash \{x\})_{sp} = (X \backslash \{y\})_{sp} \quad \Leftrightarrow \quad x=y.
\]
\ele

\begin{proof}
Suppose $(X \backslash \{x\})_{sp} = (X \backslash\{y\})_{sp}$. Then there is no closed set which contains $y$ and not $x$, and vice versa. 
Therefore, $\overline{\{x\}}= \overline{\{y\}}$. In a Zariski space, every irreducible set has a unique generic point, but since $x$ and $y$ 
are generic points of their closures, we have $x=y$ by the assumed uniqueness. 
\end{proof}

\subsection{Classification of thick tensor ideals and the Balmer spectrum}

If $\bK$ is a compactly generated M$\Delta$C with a weak support datum $\sigma$, we have now exhibited maps 
$$
\ThickId(\bK^{c}) \begin{array}{c} {\Phi_{\sigma}} \atop {\longrightarrow} \\ {\longleftarrow}\atop{\Theta_\sigma} \end{array}  \XX_{sp}.
$$
If $X$ is a Zariski space and $\sigma$ satisfies the additional conditions (\ref{EE:supportone}) and (\ref{EE:supporttwo}), 
one can now classify thick tensor ideals of $\bK^c$ and the Balmer spectrum. 

\begin{theorem} \label{I:bijectiongeneral} Let $\bK$ be a compactly generated M$\Delta$C and $\sigma : \bK \to \XX$ be an extended weak support datum
for a Zariski space $X$ such that $\Phi_\sigma (\langle C \rangle)$ is closed for every compact object $C$.
Recall the maps $\Phi_\sigma$ and $\Theta_\sigma$ defined in \eqref{Phi} and \eqref{Theta}, 
and the map $f_\sigma$ from \thref{f-weak} and \prref{f-theta}.
\begin{enumerate}
\item[(a)] If $\sigma$ satisfies the faithfulness property (\ref{EE:supportone}), then $\Theta_\sigma \circ \Phi_\sigma= \id$. 
\item[(b)] If $\sigma$ satisfies the realization property (\ref{EE:supporttwo}), then:
	\begin{enumerate}
	\item[(i)] $\Phi_\sigma \circ \Theta_\sigma= \id$.
	\item[(ii)] The map $f_\sigma$ is injective.
	\end{enumerate}
\item[(c)] If $\sigma$ satisfies both conditions (\ref{EE:supportone}) and (\ref{EE:supporttwo}), then:
	\begin{enumerate}
	\item[(i)]  $\Phi_\sigma$ and $\Theta_\sigma$ are mutually inverse maps. They are isomorphisms of ordered monoids, where the set of thick ideals of $\bK^c$ is equipped 
	 with the operation $\bI, \bJ \mapsto \langle \bI \otimes \bJ \rangle$ and the inclusion partial order, and $\mc{X}_{sp}$ is equipped with the operation of intersection and 
	 the inclusion partial order.
	\item[(ii)] For every prime ideal $\bP$ of $\bK^c$, there exists $x \in X$ with $\Phi_\sigma(\bP)=(X \backslash \{x\})_{sp}$.
	\item[(iii)] The map $f_\sigma: X \to \Spc \bK^c$ is a homeomorphism.
	\end{enumerate}
\end{enumerate}
\end{theorem}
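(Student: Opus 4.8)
The plan is to establish (a) and (b)(i) first, deduce (b)(ii) and (c)(i) formally, and then use them to obtain the surjectivity in (c)(ii) and the homeomorphism in (c)(iii). Throughout write $\Phi=\Phi_\sigma$ and $\Theta=\Theta_\sigma$; both are order preserving for inclusion. For (a), the inclusion $\bI\subseteq\Theta(\Phi(\bI))$ is immediate, since $\langle M\rangle\subseteq\bI$ forces $\Phi(\langle M\rangle)\subseteq\Phi(\bI)$ for each $M\in\bI$. For the reverse inclusion I would reduce to the noncommutative Hopkins' Theorem~\ref{T:Hopkinstheorem}. First, using \leref{dist-triang-weak} together with monotonicity of $\Phi$ under inclusion of ideals, $\{A'\in\bK^c : \Phi(\langle A'\rangle)\subseteq W'\}$ is a thick ideal of $\bK^c$ for every $W'\in\mc{X}_{sp}$; applying this with $W'=\Phi(\langle M_1\rangle)\cup\Phi(\langle M_2\rangle)$ yields $\Phi(\langle M_1\oplus M_2\rangle)=\Phi(\langle M_1\rangle)\cup\Phi(\langle M_2\rangle)$. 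Now let $N\in\Theta(\Phi(\bI))$, so the closed set $\Phi(\langle N\rangle)$ is contained in $\Phi(\bI)=\bigcup_{M\in\bI}\Phi(\langle M\rangle)$. Since $X$ is a Zariski space, $\Phi(\langle N\rangle)$ has finitely many irreducible components, each with a generic point, and each such generic point lies in some $\Phi(\langle M\rangle)$ with $M\in\bI$, whose closure then contains the whole component; taking $M_0$ to be the finite direct sum of the corresponding objects (so $M_0\in\bI$) gives $\Phi(\langle N\rangle)\subseteq\Phi(\langle M_0\rangle)$, hence $N\in\langle M_0\rangle\subseteq\bI$ by Theorem~\ref{T:Hopkinstheorem}, which is where the Faithfulness Property \eqref{EE:supportone} enters.

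For (b)(i), $\Phi(\Theta(W))\subseteq W$ is formal, and the reverse holds because $W=\bigcup\{Z\subseteq W : Z\ \text{closed}\}$ and each such $Z$ equals $\Phi(\langle M_Z\rangle)$ for some $M_Z\in\bK^c$ by the Realization Property \eqref{EE:supporttwo}, with $M_Z\in\Theta(W)$, so $Z\subseteq\Phi(\Theta(W))$. Then (b)(ii) follows: by \prref{f-theta} we have $f_\sigma(x)=\Theta((X\setminus\{x\})_{sp})$, so $f_\sigma(x)=f_\sigma(y)$ gives $(X\setminus\{x\})_{sp}=(X\setminus\{y\})_{sp}$ after applying $\Phi$ and using (b)(i), whence $x=y$ by \leref{x-minus-x-unique}. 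For (c)(i), (a) and (b)(i) say $\Phi$ and $\Theta$ are mutually inverse bijections; both are order preserving, hence order isomorphisms, and $\Phi$ is multiplicative by \leref{ideal-bi-bj-intersect} and sends the identity $\langle 1\rangle=\bK^c$ to the identity $X$, so it is an isomorphism of ordered monoids.

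The substantive step is (c)(ii), the surjectivity of $f_\sigma$. Let $\bP$ be a prime ideal and $W=\Phi(\bP)$. Transporting primeness along the monoid isomorphism of (c)(i): if $W_1,W_2\in\mc{X}_{sp}$ satisfy $W_1\cap W_2\subseteq W$, then $\Theta(W_1)\otimes\Theta(W_2)\subseteq\langle\Theta(W_1)\otimes\Theta(W_2)\rangle=\Theta(W_1\cap W_2)\subseteq\bP$ (using \leref{ideal-bi-bj-intersect} together with (a) and (b)(i)), so $\Theta(W_i)\subseteq\bP$ for some $i$, i.e.\ $W_i\subseteq W$; and $W\neq X$ since $\bP$ is proper. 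It then suffices to show that a proper prime element $W$ of $(\mc{X}_{sp},\cap)$ over a Zariski space is of the form $(X\setminus\{x\})_{sp}$. Let $\mc{C}$ be the collection of closed subsets of $X$ not contained in $W$; primeness makes $\mc{C}$ closed under finite intersections, and $\varnothing\notin\mc{C}$. Since $X$ is Noetherian, $\bigcap\mc{C}$ equals a finite intersection of members of $\mc{C}$, hence lies in $\mc{C}$; pick $x\in(\bigcap\mc{C})\setminus W$. By construction every closed set avoiding $x$ lies in $W$, so $(X\setminus\{x\})_{sp}\subseteq W$; conversely $x\notin W$ and $W$ specialization-closed give $W\subseteq(X\setminus\{x\})_{sp}$. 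Thus $W=(X\setminus\{x\})_{sp}$ and $\bP=\Theta(W)=f_\sigma(x)$ by (a) and \prref{f-theta}. For (c)(iii), $f_\sigma$ is continuous by \thref{f-weak}, injective by (b)(ii), and surjective by (c)(ii); and every closed subset of $X$ has the form $\Phi(\langle M\rangle)=f_\sigma^{-1}(V(M))$ for some $M\in\bK^c$ (by \eqref{EE:supporttwo} and \thref{f-weak}), so bijectivity of $f_\sigma$ gives $f_\sigma(\Phi(\langle M\rangle))=V(M)$, a closed set; hence $f_\sigma$ is closed, and so a homeomorphism.

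I expect the main obstacle to be (c)(ii): it reduces to the purely topological statement that, over a Zariski space, the prime elements of $(\mc{X}_{sp},\cap)$ are exactly the sets $(X\setminus\{x\})_{sp}$, and the argument above leans essentially on $X$ being Noetherian (to control $\bigcap\mc{C}$) and sober (through \leref{x-minus-x-unique} and the use of generic points in (a)). The remaining steps are bookkeeping around Theorem~\ref{T:Hopkinstheorem}, \leref{ideal-bi-bj-intersect}, \leref{dist-triang-weak}, \prref{f-theta}, \thref{f-weak}, and \leref{x-minus-x-unique}.
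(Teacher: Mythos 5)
Your proposal is correct and follows essentially the same route as the paper: part (a) via Hopkins' Theorem together with irreducible components and generic points in the Zariski space, part (b)(i) via the realization property, (b)(ii) via \leref{x-minus-x-unique} and \prref{f-theta}, (c)(ii) by transporting primeness through the monoid isomorphism and then using Noetherian-ness to identify the prime elements of $(\mc{X}_{sp},\cap)$ as the sets $(X\setminus\{x\})_{sp}$, and (c)(iii) by checking $f_\sigma$ is a continuous closed bijection. The only difference is cosmetic: in (c)(ii) you argue directly by intersecting the family $\mc{C}$ of closed sets not contained in $W$ and picking $x$ in $\bigcap\mc{C}\setminus W$, whereas the paper phrases the same Noetherian step as a proof by contradiction.
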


\begin{proof} 

We first show (a). Given a thick tensor ideal $\bI$ of $\bK^c$, set $W= \Phi_\sigma(\bI)$ and $\bI_W= \Theta_\sigma(W)$. Then by definition 
$$I_W= \Theta_\sigma(W)=\Theta_\sigma ( \Phi_\sigma( \bI))= \{ M : \Phi_\sigma(\langle M \rangle) \subseteq \Phi_\sigma(\bI)\} \supseteq \bI.$$

For the reverse inclusion, let $N\in\bI_{W}$, so $\Phi_\sigma(\langle N\rangle) \subseteq W$. Since $X$ is a Zariski space, $\Phi_\sigma(\langle N\rangle)=W_{1}\cup\dots\cup W_{n}$, where 
the $W_{i}$ are the irreducible components of $\Phi_\sigma(\langle N\rangle)$.  Moreover, each $W_{i}$ has a generic point $x_{i}$ with $\overline{\{x_{i}\}}=W_{i}$. 
Since $W_{i}\subseteq W$ one has $x_{i}\in W$. By definition of $W$, there  exists $M_{i}\in\bI$ such that $x_{i}\in \Phi_\sigma(\langle M_{i}\rangle)$. 
Since each $\Phi_\sigma(\langle M_{i}\rangle)$ is closed, it follows that $W_{i}\subseteq \Phi_\sigma(\langle M_{i}\rangle)$. Now set $M:= \bigoplus_{i=1}^{n}M_{i}\in\bI$. Then
$$
\Phi_\sigma(\langle N\rangle) \subseteq \bigcup_{i=1}^{n}\Phi_\sigma(\langle M_{i}\rangle) = \Phi_\sigma(\langle M\rangle) \subseteq W.
$$

We claim that $\langle N\rangle \subseteq \langle M\rangle$. Observe that $\bI$ is a thick tensor ideal containing $\langle M\rangle$, so $\langle M\rangle\subseteq\bI$. 
This implies that the aforementioned assertion will complete the proof of the  inclusion $\bI_{W}\subseteq\bI$.

To prove the claim, we employ Hopkins' Theorem (Theorem~\ref{T:Hopkinstheorem}). By this result one has $\langle M\rangle = \bI_{\Phi_\sigma(\langle M\rangle)}$.
However, $\Phi_\sigma(\langle N\rangle)\subseteq \Phi_\sigma(\langle M\rangle)$, so $\langle N\rangle \subseteq \bI_{\Phi_\sigma(\langle M\rangle)}=\langle M\rangle$.

Next, we show (b)(i). We have automatically that
$$
\Phi_\sigma(\Theta_\sigma(W)) = \Phi_\sigma(\bI_{W}) = \bigcup_{M\in\bI_{W}} \Phi_\sigma(\langle M\rangle) \subseteq W.
$$
For the reverse inclusion, express $W=\bigcup_{j\in J} W_{j}$ for some index set $J$ and closed subsets $W_{j}\in \XX$. By the assumption 
(\ref{EE:supporttwo}), there exist objects $N_{j}\in \bK^{c}$ such that $\Phi_\sigma(\langle N_{j}\rangle) = W_{j}$ for 
$j\in J$. It follows that $N_{j}\in \bI_{W}$ so $W \subseteq \bigcup_{M\in\bI_{W}} \Phi_\sigma(\langle M\rangle)$. Consequently, $\Phi_\sigma(\Theta_\sigma(W))=W$.

For (b)(ii), we just note that by (b)(i), $\Theta_\sigma$ is injective. By \leref{x-minus-x-unique}, the map sending $x \mapsto (X \backslash \{x\})_{sp}$ is injective. By \prref{f-theta}, 
$f_\sigma(x)= \Theta_\sigma( (X\backslash \{x\})_{sp}).$ Hence, $f$ is injective.

By (a) and (b), (c)(i) is automatic. We now show (c)(ii). 
Suppose $\bP$ is a prime ideal. By (b)(i), we can write arbitrary specialization-closed sets as $\Phi_\sigma(\bI)$ and $\Phi_\sigma(\bJ)$ for some ideals $\bI$ and $\bJ$. We have
$$\Phi_\sigma(\bI) \cap \Phi_\sigma(\bJ) \subseteq \Phi_\sigma(\bP)$$
$$\Updownarrow$$
$$\Phi_\sigma(  \bI \otimes \bJ  )\subseteq \Phi_\sigma(\bP)$$
$$\Updownarrow$$
$$ \bI \otimes \bJ \subseteq \bP $$
$$\Updownarrow$$
$$\bI \text{ or }\bJ \subseteq \bP$$
$$\Updownarrow$$
$$\Phi_\sigma(\bI) \text{ or } \Phi_\sigma(\bJ) \subseteq \Phi_\sigma(\bP).$$
Hence, $\Phi_\sigma(\bP)$ has the property that for any specialization-closed sets $S$ and $T$ of $X$, 
$S \cap T \subseteq \Phi_\sigma(\bP) \Rightarrow  S$ or $T \subseteq \Phi_\sigma(\bP)$. We claim that the only sets with this property are sets of the form 
$(X \backslash \{x\})_{sp}$. Suppose $\Phi_\sigma(\bP)$ is not a set of this form. Then for every point $x$ in its complement, there exists some closed 
set $V_x$ which does not contain $x$ and is not contained in $\Phi_\sigma(\bP)$. We have
$\bigcap_{x \in \Phi_\sigma(\bP)^c} V_x \subseteq \Phi_\sigma(\bP),$ but for each $x$, $V_x \not \subseteq \Phi_\sigma(\bP)$. 
By assumption $X$ is Noetherian, so there is a finite subset $S$ of $\Phi_\sigma(\bP)^c$ with $\bigcap_{x \in \Phi_\sigma(\bP)^c} V_x = \bigcap_{x \in S} V_x$, 
and since this is now a finite intersection this shows that there exist closed sets $S$ and $T$ with $S \cap T \subseteq \Phi_\sigma(\bP)$, 
but neither $S$ nor $T$ is contained in $\Phi_\sigma(\bP)$. Since this is a contradiction, $\Phi_\sigma(\bP)$ has the form $(X \backslash \{x\})_{sp}$ for some $x$. 

Now we will show (c)(iii), that $f_\sigma$ is a homeomorphism. Given a prime ideal $\bP$ of $\bK$, there exists $x \in X$ such that $\Phi_\sigma(\bP)=(X \backslash \{x\})_{sp}$, and so
$$\bP = \Theta_\sigma( \Phi_\sigma ( \bP)) = \Theta_\sigma( (X \backslash \{x\})_{sp})= f_\sigma(x)$$ by \prref{f-theta}. This shows that $f_\sigma$ 
is surjective, and hence bijective by (b)(ii). We now show that $f_\sigma$ is a closed map. To an arbitrary closed set, which by (\ref{EE:supporttwo}) 
is of the form $\Phi_\sigma(\langle M \rangle)$, we apply $f_\sigma$:
$$f_\sigma( \Phi_\sigma(\langle M \rangle)) = f_\sigma ( f^{-1}_\sigma(V(M))) = V(M),$$
using the surjectivity of $f_\sigma$ and the formula for $f_\sigma^{-1}(V(M))$ given in \thref{f-weak}. Since $V(M)$ is closed, 
$f_\sigma$ is a closed and continuous bijection, and hence a homeomorphism.
\end{proof}

\section{Noncommutative geometry for one-sided ideals} \label{S:onesided} 

In this section we present a method for the classification of the thick right ideals of an M$\Delta$C.
We introduce a new concept (quasi support datum) to deal with thick one-sided ideals.

In the next section we illustrate how one can extract an explicit description of the 
Balmer spectrum of an M$\Delta$C and a classification of its thick two-sided ideals out of the 
classification of thick one-sided ideals given in this section.
\subsection{Quasi support data} 
A thick right ideal of an M$\Delta$C, $\bK$, is a full triangulated subcategory of $\bK$ that contains all direct summands of its objects and is closed under right
tensoring with arbitrary objects of $\bK$.
\bde{nc-q-support}
Let $\bK$ be a monoidal triangulated category, $X$ a topological space, and $\sigma$ a map $\bK \to \mc{X}$. We call $\sigma$ a {\em (noncommutative) quasi support datum} if
\begin{enumerate}
\item[(a)] $\sigma(0)=\varnothing$ and $ \sigma(1)= X$; 
\item[(b)] $\sigma(A\oplus B)=\sigma(A)\cup \sigma(B)$ $\forall A, B \in \bK$; 
\item[(c)] $\sigma(\Sigma A)=\sigma(A)$, $\forall A \in \bK$; 
\item[(d)] If $A \to B \to C \to \Sigma A$ is a distinguished triangle, then $\sigma(A) \subseteq \sigma(B) \cup \sigma(C)$; 
\item[(e)] $\sigma(A \otimes B)\subseteq \sigma (A)$, $\forall A, B \in \bK$. 
\end{enumerate}
\ede
\noindent 
For an extended quasi support datum we replace (b) with 
\begin{enumerate} 
\item[(b')] $\sigma(\bigoplus_{i \in I} A_i)=\bigcup_{i \in I} A_i$, $\forall A_i \in \bK$.
\end{enumerate} 

Similarly to the previous two sections, we will be interested in quasi support data $\sigma:\bK \to \mc{X}$ that satisfy the following one-sided type assumptions: 
\begin{eqnlist} 
\item \label{E:supporttwo} $\Phi(\llangle M \rrangle_{\mathrm{r}})=\varnothing$ if and only if $M=0$, for all $M \in \bK$ (Faithfulness Property); 
\item \label{E:supportthree} For any $W\in {\mathcal X}_{cl}$ there exists  $M\in \bK^{c}$ such that $\Phi(\langle M\rangle_{\mathrm{r}})=W$ (Realization Property). 
\end{eqnlist}

Here and below, similarly to the two-sided case, for $M \in \bK^c$, $\langle M \rangle_{\mathrm{r}}$ denotes the smallest thick right ideal of $\bK^c$ containing $M$; that is the intersection of 
all thick right ideals containing $M$. For $M \in \bK$, $\llangle M \rrangle_{\mathrm{r}}$ denotes the smallest thick right ideal of $\bK$ containing $M$. 

\subsection{One-sided Hopkins' Theorem for quasi support data} We first state an assumption that acts as a (one-sided) replacement for 
condition (e) in the definition of weak support datum. Recall the definition \eqref{Phi} of the map $\Phi_\sigma$. 
Similarly to the arguments in Section \ref{SS:weaksupport}, one shows that 
in the presence of the other conditions for quasi support datum, condition (e) is equivalent to 
\begin{equation}
\label{q-sup-sig-Phi}
\Phi_\sigma ( \llangle M \rrangle_{\mathrm{r}} ) = \sigma (M), \quad \forall M \in \bK.  
\end{equation}

\begin{assumption} \label{A:keyproj} Suppose that $M, N \in \bK^{c}$ are such that 
\[
\Phi_\sigma( \langle N \rangle_{\mathrm{r}}) \subseteq \sigma(M).
\]
Set $\bI'=\langle M \rangle_{\mathrm{r}}$. If $M^* \otimes L_{\bI'}(N) =0$, then $L_{\bI'} (N)=0$ (for the localization functor as in Section \ref{loc-fun}).
\end{assumption} 

With this assumption, one proves the following one-sided version of Theorem \ref{T:Hopkinstheorem}.
Similarly to \eqref{Theta}, for a quasi support datum $\sigma:\bK \to \mc{X}$ for a compactly generated M$\Delta$C $\bK$, 
denote by $\Theta_\sigma$ the map from specialization-closed subsets of $X$ to subsets of $\bK^c$:
\[
\Theta_\sigma(W)=\{M \in \bK^c : \Phi_{\sigma}(\langle M \rangle_{\mathrm{r}}) \subseteq W\}, \quad \mbox{for} \quad W \in \mc{X}_{sp}.
\]

\begin{theorem} \label{T:Hopkinstheorem2} Let $\bK$ be a compactly generated M$\Delta$C and $\sigma:\bK \to \mc{X}$ be an assignment to subsets of a Zariski space $X$ that satisfies the conditions 
(a), (b'), (c), (d) for an extended quasi support datum, such that $\sigma:\bK^{c} \to \mc{X}$ is a quasi support datum for a Zariski space $X$. Assume that Assumption~\ref{A:keyproj} holds. Then for each object $M\in\bK^{c}$,     
\[
\Theta_\sigma ( \Phi_\sigma ( \langle M \rangle_{\mathrm{r}})) = \langle M \rangle_{\mathrm{r}}. 
\]
\end{theorem}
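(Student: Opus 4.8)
The plan is to run the proof of the two-sided Hopkins' Theorem (Theorem~\ref{T:Hopkinstheorem}) in the one-sided setting, with the thick right ideal $\bI':=\langle M\rangle_{\mathrm{r}}\subseteq\bK^{c}$ in place of the two-sided ideal, and with Assumption~\ref{A:keyproj} taking over the role that the multiplicativity of a weak support datum played there. Write $Y:=\Phi_{\sigma}(\langle M\rangle_{\mathrm{r}})$. A preliminary observation, proved just as the equivalence of condition (e) and \eqref{q-sup-sig-Phi} but carried out inside $\bK^{c}$ (closing a one-element set under shifts, cones, summands and right tensoring with compact objects does not enlarge $\Phi_{\sigma}$ past $\sigma$ of the generator, by the axioms (b)--(e) of a quasi support datum), is that $\Phi_{\sigma}(\langle N\rangle_{\mathrm{r}})=\sigma(N)$ for every $N\in\bK^{c}$. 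In particular $Y=\sigma(M)$ and $\Theta_{\sigma}(Y)=\{N\in\bK^{c}:\sigma(N)\subseteq\sigma(M)\}$.

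For the inclusion $\langle M\rangle_{\mathrm{r}}\subseteq\Theta_{\sigma}(Y)$, I would first verify that $\bI:=\Theta_{\sigma}(Y)$ is a thick right ideal of $\bK^{c}$ --- the one-sided counterpart of Proposition~\ref{ptheta-w-ideal}. This is routine from the quasi support datum axioms: closure under $\Sigma$ from (c); closure under summands and the two-out-of-three property for triangles from (b) and (d) (after rotating the triangle forward and using (c)); closure under right tensoring from (e). Since $\sigma(M)\subseteq Y$ we get $M\in\bI$, hence $\langle M\rangle_{\mathrm{r}}\subseteq\bI$.

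For the reverse inclusion $\Theta_{\sigma}(Y)\subseteq\langle M\rangle_{\mathrm{r}}$, fix $N\in\bI$, so $\Phi_{\sigma}(\langle N\rangle_{\mathrm{r}})=\sigma(N)\subseteq\sigma(M)$. Apply Theorem~\ref{T:localizationtriangles} to the thick subcategory $\bI'=\langle M\rangle_{\mathrm{r}}$ of $\bK^{c}$ to get the triangle $\Gamma_{\bI'}(N)\to N\to L_{\bI'}(N)\to$. I claim $L_{\bI'}(N)=0$. Since $\bI'$ is a right ideal, $M\otimes C\in\bI'$ for every $C\in\bK^{c}$, and by Theorem~\ref{T:localizationtriangles}(a) there are no nonzero maps from $\bI'$ to $L_{\bI'}(N)$; hence $\bK(M\otimes C,L_{\bI'}(N))=0$ for all $C\in\bK^{c}$. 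Using rigidity of the compact object $M$, one identifies $\bK(M\otimes C,L_{\bI'}(N))$ with $\bK(C,M^{*}\otimes L_{\bI'}(N))$ via the duality adjunction for $M$ (compare \eqref{eq:duality}), and since $\bK$ is compactly generated this forces $M^{*}\otimes L_{\bI'}(N)=0$. Now Assumption~\ref{A:keyproj} applies --- its hypothesis $\Phi_{\sigma}(\langle N\rangle_{\mathrm{r}})\subseteq\sigma(M)$ holds --- and yields $L_{\bI'}(N)=0$. Therefore $\Gamma_{\bI'}(N)\cong N$, so $N\in\Loc(\bI')$ by Theorem~\ref{T:localizationtriangles}(b), and finally $N\in\bI'$ by \cite[Lemma~2.2]{Neeman3} since $N$ is compact. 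Thus $\bI\subseteq\bI'$, and the two inclusions give the theorem.

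The main obstacle is the step $M^{*}\otimes L_{\bI'}(N)=0$. In the two-sided Hopkins' Theorem a vanishing of the form $L\otimes\bK\otimes M^{*}=0$ was converted into $L=0$ using the multiplicativity $\Phi_{\sigma}(\bI\otimes\bJ)=\Phi_{\sigma}(\bI)\cap\Phi_{\sigma}(\bJ)$ of a weak support datum together with faithfulness; for thick right ideals neither tool is available, and Assumption~\ref{A:keyproj} is exactly what is imposed to close that gap. One must also be careful, in the absence of a braiding, that the twist of $L_{\bI'}(N)$ one can genuinely prove to vanish --- the one coming from the adjunction attached to $M\otimes(-)$ together with the fact that a thick right ideal is generated by the objects $M\otimes C$ --- is precisely the twist occurring in Assumption~\ref{A:keyproj}; matching up the left and right duals here is the one new wrinkle beyond the two-sided argument. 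The remaining points (that $\Theta_{\sigma}$ lands in thick right ideals, the one-sided form of \eqref{q-sup-sig-Phi}, and the passage from $\Loc(\bI')$ back to $\bI'$) are straightforward adaptations of the two-sided case.
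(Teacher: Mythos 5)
Your proposal is correct and does exactly what the paper (which omits the proof and only says ``one proves'' it) intends: run the two-sided Hopkins argument with the thick right ideal $\bI'=\langle M\rangle_{\mathrm{r}}$, use compact generation and duality to kill $L_{\bI'}(N)$ twisted by a dual of $M$, invoke Assumption~\ref{A:keyproj}, and then pass from $\Loc(\bI')$ to $\bI'$ by Neeman's lemma. The one-sided form $\Phi_\sigma(\langle N\rangle_{\mathrm{r}})=\sigma(N)$ for $N\in\bK^c$, the fact that $\Theta_\sigma(Y)$ is a thick right ideal, and the easy inclusion $\langle M\rangle_{\mathrm{r}}\subseteq\Theta_\sigma(Y)$ are all handled correctly.

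The ``wrinkle'' you flag at the end is real and worth naming precisely, because your displayed adjunction step is, strictly with the paper's normalization, off by a choice of dual. Equation \eqref{eq:duality} makes $(-)^*$ the \emph{left} dual (evaluation $M^*\otimes M\to\unit$, coevaluation $\unit\to M\otimes M^*$), whose two associated adjunctions are $\bK(A\otimes M,B)\cong\bK(A,B\otimes M^*)$ and $\bK(M^*\otimes A,B)\cong\bK(A,M\otimes B)$. Neither of these gives $\bK(M\otimes C,L)\cong\bK(C,M^*\otimes L)$. The adjunction that does apply here is the one attached to the \emph{right} dual ${}^*M$ (evaluation $M\otimes{}^*M\to\unit$, coevaluation $\unit\to{}^*M\otimes M$), which gives $\bK(M\otimes C,L)\cong\bK(C,{}^*M\otimes L)$; so from $\bK(M\otimes C,L_{\bI'}(N))=0$ for all compact $C$ and compact generation one actually gets ${}^*M\otimes L_{\bI'}(N)=0$. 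This is precisely the twist that Assumption~\ref{A:keyproj} must speak about, so either one should read the $M^*$ of Assumption~\ref{A:keyproj} as the right dual, or one needs the additional (mild, and satisfied in the paper's applications) hypothesis that $\sigma({}^*M)=\sigma(M^*)$, as in the pivotal case. This is a genuine ambiguity in the paper's own formulation rather than a defect in your argument, and you were right to single it out; with that reading fixed, your proof is complete.
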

\subsection{Classification of thick tensor one-sided (right) ideals} With Theorem~\ref{T:Hopkinstheorem2}, we can state a classification 
theorem for thick (right) ideals for $\bK^{c}$. The proof follows the same line of reasoning as given in Theorem~\ref{I:bijectiongeneral}. 

\begin{theorem} \label{I:bijectiongeneral-r}  Let $\bK$ be a compactly generated M$\Delta$C and $\sigma:\bK \to \mc{X}$ be an assignment to subsets of a Zariski space $X$ that satisfies the conditions 
(a), (b'), (c), (d) for an extended quasi support datum. Suppose that $\sigma$ restricts to a quasi support datum on $\bK^{c}$ where $\Phi_\sigma (\langle C \rangle)$ is closed for every $C\in \bK^{c}$. 
Moreover, assume that $\sigma$ satisfies the realization property (7.1.\ref{E:supportthree}) and Assumption~\ref{A:keyproj} holds. 

Then the maps 
$\Phi_\sigma$ and $\Theta_\sigma$
$$
\{\text{thick right ideals of $\bK^{c}$}\} \begin{array}{c} {\Phi_\sigma} \atop {\longrightarrow} \\ {\longleftarrow}\atop{\Theta_\sigma} \end{array}  \XX_{sp}
$$
are mutually inverse. 
\end{theorem}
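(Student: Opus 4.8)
The plan is to run the argument of Theorem~\ref{I:bijectiongeneral} in the one-sided setting, substituting the right ideal $\langle M \rangle_{\mathrm{r}}$ for the two-sided ideal $\langle M \rangle$ and Theorem~\ref{T:Hopkinstheorem2} for the two-sided Hopkins' Theorem. First I would record the bookkeeping facts that make the two maps well defined. Arguing exactly as in \leref{gamma-ideal} and in the derivation of \eqref{q-sup-sig-Phi}, but closing a subset of $\bK^c$ only under direct summands, shifts, cones, and \emph{right} tensoring, one gets $\Phi_\sigma(\langle M \rangle_{\mathrm{r}}) = \sigma(M)$ for every $M \in \bK^c$; in particular $\Phi_\sigma(\bI) = \bigcup_{M \in \bI} \sigma(M)$ lies in $\mc{X}_{sp}$ for every thick right ideal $\bI$ of $\bK^c$, since each $\sigma(M)$ is closed by hypothesis. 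The one-sided analogue of \prref{theta-w-ideal} then shows that $\Theta_\sigma(W)$ is a thick right ideal of $\bK^c$ for every $W \in \mc{X}_{sp}$: stability under shifts is immediate from $\langle \Sigma M \rangle_{\mathrm{r}} = \langle M \rangle_{\mathrm{r}}$; stability under summands follows from $\sigma(M) \subseteq \sigma(M \oplus N)$; the two-out-of-three property follows from axiom (d) of a quasi support datum; and closure under right tensoring follows from axiom (e). Both $\Phi_\sigma$ and $\Theta_\sigma$ are clearly inclusion-preserving.

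Next I would establish $\Phi_\sigma \circ \Theta_\sigma = \id$ on $\mc{X}_{sp}$. The inclusion $\Phi_\sigma(\Theta_\sigma(W)) \subseteq W$ is immediate, since $\sigma(M) = \Phi_\sigma(\langle M \rangle_{\mathrm{r}}) \subseteq W$ for all $M \in \Theta_\sigma(W)$. For the reverse inclusion I would write $W = \bigcup_{j \in J} W_j$ with each $W_j$ closed, invoke the realization property to produce $N_j \in \bK^c$ with $\Phi_\sigma(\langle N_j \rangle_{\mathrm{r}}) = \sigma(N_j) = W_j$, observe that $N_j \in \Theta_\sigma(W)$, and conclude $W_j \subseteq \Phi_\sigma(\Theta_\sigma(W))$ for each $j$, hence $W \subseteq \Phi_\sigma(\Theta_\sigma(W))$.

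The core of the proof is $\Theta_\sigma \circ \Phi_\sigma = \id$ on thick right ideals. Given such an ideal $\bI$, set $W = \Phi_\sigma(\bI)$ and $\bI_W = \Theta_\sigma(W)$. The inclusion $\bI \subseteq \bI_W$ is trivial. For $\bI_W \subseteq \bI$, take $N \in \bI_W$, so the closed set $\sigma(N) = \Phi_\sigma(\langle N \rangle_{\mathrm{r}})$ is contained in $W$. Since $X$ is Noetherian, $\sigma(N)$ has finitely many irreducible components $W_1, \dots, W_n$, each of the form $\overline{\{x_i\}}$ for a generic point $x_i \in \sigma(N) \subseteq W$; by definition of $W$ there is $M_i \in \bI$ with $x_i \in \sigma(M_i)$, and $\sigma(M_i)$ being closed forces $W_i \subseteq \sigma(M_i)$. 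Putting $M = \bigoplus_{i=1}^{n} M_i$, which is a compact object lying in $\bI$, I get $\Phi_\sigma(\langle N \rangle_{\mathrm{r}}) = \sigma(N) \subseteq \bigcup_i \sigma(M_i) = \sigma(M) = \Phi_\sigma(\langle M \rangle_{\mathrm{r}})$. By Theorem~\ref{T:Hopkinstheorem2}, $\langle M \rangle_{\mathrm{r}} = \Theta_\sigma(\Phi_\sigma(\langle M \rangle_{\mathrm{r}})) = \{L \in \bK^c : \Phi_\sigma(\langle L \rangle_{\mathrm{r}}) \subseteq \Phi_\sigma(\langle M \rangle_{\mathrm{r}})\}$, so $N \in \langle M \rangle_{\mathrm{r}} \subseteq \bI$. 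Hence $\bI_W = \bI$, and combined with the previous paragraph this shows $\Phi_\sigma$ and $\Theta_\sigma$ are mutually inverse.

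The main obstacle, apart from invoking Theorem~\ref{T:Hopkinstheorem2} (which is exactly where Assumption~\ref{A:keyproj} does its work as the one-sided replacement for rigidity and faithfulness), is the inclusion $\bI_W \subseteq \bI$: one must exploit the Zariski, hence Noetherian, hypothesis on $X$ to replace an arbitrary member $N$ of $\bI_W$ by the \emph{principal} right ideal generated by a single finite direct sum $M$ of objects of $\bI$, since the one-sided Hopkins' Theorem only controls right ideals of the form $\langle M \rangle_{\mathrm{r}}$. A minor secondary point that requires attention is the identity $\Phi_\sigma(\langle M \rangle_{\mathrm{r}}) = \sigma(M)$ together with the closedness of these sets, without which the irreducible-component decomposition used above is not available.
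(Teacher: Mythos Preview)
Your proposal is correct and follows exactly the approach the paper indicates: it explicitly states that the proof ``follows the same line of reasoning as given in Theorem~\ref{I:bijectiongeneral}'' with Theorem~\ref{T:Hopkinstheorem2} supplying the one-sided Hopkins' step, and you have carried this out in detail, including the Noetherian decomposition argument for the inclusion $\bI_W \subseteq \bI$ and the realization argument for $\Phi_\sigma \circ \Theta_\sigma = \id$.
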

\bre{q-support-no-monoid} The set of thick right ideals of $\bK^{c}$ is an ordered monoid with the operation $\bI, \bJ \mapsto \langle \bI \otimes \bJ \rangle_{\mathrm{r}}$ and the inclusion partial order. The set $\mc{X}_{sp}$ is  is an ordered monoid with the operation of intersection and the inclusion partial order. The maps $\Phi_\sigma$ and $\Theta_\sigma$ preserve inclusions but in general they are not isomorphisms of monoids. 

More precisely, {\em{$\Phi_\sigma$ and $\Theta_\sigma$ are isomorphisms of ordered monoids if and only if $\sigma : \bK \to \mc{X}$  is a support datum.}} 

Indeed, $\Phi_\sigma$ is an isomorphism of monoids if and only if $\Phi_\sigma( \langle \bI \otimes \bJ \rangle_{\mathrm{r}}) = \Phi_\sigma( \bI) \cap  \Phi_\sigma(\bJ)$
for all thick right ideals $\bI$ and $\bJ$ of $\bK^{c}$. This in turn is equivalent to $\Phi_\sigma( \bI \otimes \bJ ) = \Phi_\sigma( \bI) \cap  \Phi_\sigma(\bJ)$
by an argument similar to the proof of \leref{gamma-ideal}. Since $\bI = \cup_{A \in \bI}  \langle A \rangle_{\mathrm{r}}$, the last property 
is equivalent to 
$\Phi_\sigma(  \langle A \rangle_{\mathrm{r}} \otimes  \langle B \rangle_{\mathrm{r}} ) = \Phi_\sigma( \langle A \rangle_{\mathrm{r}}) \cap  \Phi_\sigma( \langle B \rangle_{\mathrm{r}})$,
$\forall A, B \in \bK$. By \eqref{q-sup-sig-Phi} the last property is equivalent to 
\[
\bigcup_{C \in \bK} \sigma (A   \otimes C \otimes B) = \sigma (A) \cap \sigma(B), \quad \forall A, B \in \bK,
\]
which is the fifth property in the definition of support data.
\qed
\ere

\subsection{Finite-dimensional Hopf algebras}\label{S:fdHopfalgrightideals}

Let $A$ be a non-semisimple finite-dimensional Hopf algebra over a field $\kk$. It is well known that $A$ is a self-injective algebra. 
Denote by $\Mod(A)$ the abelian category of (possibly infinite-dimensional) $A$-modules and by $\modd(A)$ its abelian subcategory of finite-dimensional $A$-modules.
Both categories are monoidal with an exact tensor product. Both categories are Frobenius. 
Denote by $\bK= \StMod(A)$ and $\stmod(A)$ the corresponding stable categories. $\bK= \StMod(A)$ is a compactly generated M$\Delta$C with 
$\bK^c = \stmod(A)$, see \cite[Example 1.48]{BIK2}. 

We first state a condition that provides a method for constructing a quasi-support data for $\bK^{c}=\stmod(A)$. 

\begin{assumption} \label{A:fg} Given $A$ a finite-dimensional Hopf algebra. 
\begin{itemize} 
\item[(a)] The cohomology ring $R=\operatorname{H}^{\bullet}(A,\kk)$ is a finitely generated algebra. 
\item[(b)] Given $M, N \in \modd (A)$, $\operatorname{Ext}_{A}^{\bullet}(M,N)$ is a finitely generated $R$-module. 
\end{itemize} 
\end{assumption} 

Assumption~\ref{A:fg} will be referred to as the {\em (fg) assumption}. This assumption is conjectured to hold in broad generality by Etingof and Ostrik, that is, for all finite tensor categories, which includes categories of modules of finite-dimensional Hopf algebras \cite[Conjecture 2.18]{EO1}. Given the (fg) assumption, one can construct the 
{\em cohomological support datum} on $\bK^{c}$ as follows. Let $M$ be an object of $\bK^{c}$ and 
let 
\begin{equation}
\label{coh-supp}
\sigma(M)=\text{Proj}(\{P\in \operatorname{Spec}(R):\ \text{Ext}_{A}(M,M)_{P}\neq 0\}).
\end{equation}
Clearly, $\sigma(M)\in {\mathcal X}_{cl}$ for $M\in \bK^{c}$. 
Next we will verify (7.1.\ref{E:supporttwo}). 
One has $\Phi_\sigma(\langle M \rangle_{\mathrm{r}})=\varnothing$ if and only if $\sigma(N)=\varnothing$ for all $N\in \langle M \rangle_{\mathrm{r}}$ if and only if 
$N=0$ for all $N\in \langle M \rangle_{\mathrm{r}}$ (cf. \cite[Proposition 2.3]{FW1}) if and only if $M=0$. 

Next, we can verify (7.1.\ref{E:supportthree}). Let $\zeta\in \operatorname{H}^{n}(A,{\mathbb C})$ and let $L_{\zeta}$ be the kernel of the map 
$\zeta:\Omega^{n}({\mathbb C})\rightarrow {\mathbb C}$. Moreover, let $Z(\zeta)=\{P\in \operatorname{Proj}(\operatorname{Spec}(R)):\ 
\zeta\in P\}$. One has $\sigma(L_{\zeta})=Z({\zeta})$. We have the following result from \cite[Theorem 2.5]{FW1}. 

\begin{proposition}\label{T:Lzetatensor} Let $M\in \stmod(A)$, and $\zeta, \zeta_{i}\in R$ be homogeneous elements of positive degree ($1\le i\le t)$. 
\begin{itemize} 
\item[(a)] $\sigma(L_{\zeta}\otimes M)=\sigma(L_{\zeta})\cap \sigma(M)$.
\item[(b)]  $\sigma(\otimes_{i=1}^{t} L_{\zeta_{t}}\otimes M)=[\cap_{i=1}^{t} \sigma(L_{\zeta})]\cap \sigma(M)$. 
\end{itemize}
\end{proposition}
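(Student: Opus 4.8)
The plan is to derive part (b) from part (a) by a short induction on $t$: applying (a) with $M$ replaced by $N=L_{\zeta_2}\otimes\dots\otimes L_{\zeta_t}\otimes M$ gives $\sigma(L_{\zeta_1}\otimes N)=\sigma(L_{\zeta_1})\cap\sigma(N)$, and the inductive hypothesis evaluates $\sigma(N)=\bigl(\bigcap_{i=2}^t\sigma(L_{\zeta_i})\bigr)\cap\sigma(M)$. So the real work is in part (a), which I would prove by establishing the two inclusions separately. The common input in both directions is the defining short exact sequence $0\to L_\zeta\to\Omega^n(\kk)\to\kk\to 0$ (one may assume $\zeta\ne 0$, the case $\zeta=0$ being trivial), together with the finite-generation Assumption~\ref{A:fg} and the identity $\sigma(L_\zeta)=Z(\zeta)$ recorded just above.

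For the inclusion $\sigma(L_\zeta\otimes M)\subseteq\sigma(L_\zeta)\cap\sigma(M)$, I would first tensor the defining sequence with $M$ over $\kk$; this preserves exactness and, since $A$ is a Hopf algebra, the projective $\Omega^n(\kk)\otimes M$ splits off a projective summand so that $\Omega^n(\kk)\otimes M\cong\Omega^n(M)$ in $\stmod(A)$. Hence there is a distinguished triangle $L_\zeta\otimes M\to\Omega^n(M)\to M\to$ in $\stmod(A)$, and conditions (c), (d) for a quasi support datum give $\sigma(L_\zeta\otimes M)\subseteq\sigma(\Omega^n M)\cup\sigma(M)=\sigma(M)$. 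For the containment in $\sigma(L_\zeta)$ I would use rigidity of $M$ in $\stmod(A)$: it yields an isomorphism of $R$-modules $\Ext^\bullet_A(L_\zeta\otimes M,L_\zeta\otimes M)\cong\Ext^\bullet_A(L_\zeta,L_\zeta\otimes M\otimes M^*)$, and the right-hand side is a module over $\Ext^\bullet_A(L_\zeta,L_\zeta)$ with $R$-action factoring through $R\to\Ext^\bullet_A(L_\zeta,L_\zeta)$; since $\sigma(L_\zeta)=Z(\zeta)$ forces a power of $\zeta$ to annihilate $\Ext^\bullet_A(L_\zeta,L_\zeta)$, that same power kills $\Ext^\bullet_A(L_\zeta\otimes M,L_\zeta\otimes M)$, so $\sigma(L_\zeta\otimes M)\subseteq Z(\zeta)=\sigma(L_\zeta)$. (This is just the verification of property (e) for the cohomological quasi support datum applied with $L_\zeta$ as the left factor.)

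For the reverse inclusion $\sigma(L_\zeta)\cap\sigma(M)\subseteq\sigma(L_\zeta\otimes M)$, I would fix a homogeneous prime $\mathfrak{p}\in Z(\zeta)\cap\sigma(M)$, so $\zeta\in\mathfrak{p}$ and $\Ext^\bullet_A(M,M)_{\mathfrak{p}}\ne 0$. Under the isomorphism $\Omega^n(\kk)\otimes M\cong\Omega^n(M)$ the middle map of the triangle becomes multiplication by the image of $\zeta$ in $\Ext^n_A(M,M)$ — this is precisely the statement that $-\otimes M\colon\operatorname{H}^\bullet(A,\kk)\to\Ext^\bullet_A(M,M)$ is the structure homomorphism. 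Applying $\Ext^\bullet_A(-,M)$ to $L_\zeta\otimes M\to\Omega^n(M)\to M\to$ would then produce a long exact sequence in which $\Ext^\bullet_A(L_\zeta\otimes M,M)$ sits between two copies of $\Ext^\bullet_A(M,M)$ joined by multiplication-by-$\zeta$ maps. Localizing at $\mathfrak{p}$ and applying the graded Nakayama lemma to the nonzero finitely generated graded module $\Ext^\bullet_A(M,M)_{\mathfrak{p}}$ over the graded-local ring $R_{(\mathfrak{p})}$ (using $\zeta\in\mathfrak{p}$) shows multiplication by $\zeta$ is not surjective, so its cokernel is nonzero and, by the long exact sequence, injects into $\Ext^\bullet_A(L_\zeta\otimes M,M)_{\mathfrak{p}}$; hence $\Ext^\bullet_A(L_\zeta\otimes M,M)_{\mathfrak{p}}\ne 0$. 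Finally, by Assumption~\ref{A:fg} the module $\Ext^\bullet_A(L_\zeta\otimes M,M)$ is finitely generated over the $R$-algebra $\Ext^\bullet_A(L_\zeta\otimes M,L_\zeta\otimes M)$, so its non-vanishing after localizing at $\mathfrak{p}$ forces $\Ext^\bullet_A(L_\zeta\otimes M,L_\zeta\otimes M)_{\mathfrak{p}}\ne 0$, i.e.\ $\mathfrak{p}\in\sigma(L_\zeta\otimes M)$.

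The main obstacle is the direction $\supseteq$, and inside it the two genuinely non-formal ingredients: (i) the identification of $\Omega^n(\kk)\otimes M$ with $\Omega^n(M)$ in $\stmod(A)$ and of the induced map with multiplication by $\zeta$ — this is where the Hopf-algebra structure is actually used, and one must check that no cocommutativity is needed, which works here because $L_\zeta$ always appears as the left tensor factor; and (ii) the passage from non-vanishing of $\Ext^\bullet_A(W,M)$ to non-vanishing of $\Ext^\bullet_A(W,W)$, which rests squarely on Assumption~\ref{A:fg}. The $\zeta$-nilpotency step used in the $\subseteq$ direction is routine but deserves a line to confirm that the relevant $R$-module structures are compatible.
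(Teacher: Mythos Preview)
The paper does not give its own proof of this proposition; it simply cites \cite[Theorem 2.5]{FW1} (Feldvoss--Witherspoon). Your sketch is essentially the argument found there: tensor the defining triangle for $L_\zeta$ with $M$, identify $\Omega^n(\kk)\otimes M$ with $\Omega^n(M)$ in the stable category, recognize the connecting map as multiplication by the image of $\zeta$ in $\Ext^\bullet_A(M,M)$, and then use Nakayama together with the (fg) assumption for the nontrivial inclusion. The induction for part~(b) is the same. So your approach is correct and matches the cited source.

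One small comment on the point you yourself flag as delicate: in the noncommutative Hopf setting the two natural $R$-actions on $\Ext^\bullet_A(W,M)$ (via $R\to\Ext^\bullet_A(W,W)$ and via $R\to\Ext^\bullet_A(M,M)$) need not literally coincide, and the map appearing in your long exact sequence is pre-composition with $\zeta_M$, i.e.\ the ``other'' action. The passage from $\Ext^\bullet_A(L_\zeta\otimes M,M)_{\mathfrak p}\ne 0$ to $\mathfrak p\in\sigma(L_\zeta\otimes M)$ therefore needs the compatibility of these actions (up to support), which is exactly what Feldvoss--Witherspoon establish in their setup. Your parenthetical acknowledgment of this is appropriate; just be aware that ``$\Ext^\bullet_A(W,M)$ is finitely generated over $\Ext^\bullet_A(W,W)$'' presupposes that the $R$-action you are localizing factors through $\Ext^\bullet_A(W,W)$, which is the substance of that compatibility.
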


Now let $W\in {\mathcal X}_{cl}$. There exists $\zeta_{1},\zeta_{2},\dots,\zeta_{t}\in R$ such that $W=Z(\zeta_{1})\cap Z(\zeta_{2})\cap  \dots \cap Z(\zeta_{t})$. 
Then by Proposition~\ref{T:Lzetatensor}, it follows that $W=\sigma(L_{\zeta_{1}}\otimes L_{\zeta_{2}}\otimes \dots \otimes L_{\zeta_{t}})$ (cf. \cite[Remark 2.7]{FW1}). 

In \cite{BIK1}, Benson, Iyengar and Krause constructed an extension of the cohomological support map $\sigma$ to the set of objects of $\bK= \StMod(A)$. This map 
takes values in the subsets of the space of homogeneous prime ideals of $\operatorname{H}^{\bullet}(A,\kk)$ and satisfies 
conditions (a), (b'), (c), (d) for an extended quasi support datum. The map
\[
A \in \bK \mapsto \sigma(A) \cap \operatorname{Proj}(\operatorname{H}^{\bullet}(A,\kk))
\]
is an extension of $\sigma$ to $\bK$ with values in $\XX(\operatorname{Proj}(\operatorname{H}^{\bullet}(A,\kk)))$, which still satisfies conditions (a), (b'), (c), (d).

With the verifications above, we can now state the following result. 

\begin{theorem} \label{T:fdHopfalgebras} Let $A$ be a finite-dimensional Hopf algebra over a field $\kk$ that satisfies (fg) and Assumption~\ref{A:keyproj}. 
With the cohomological support datum there exists a bijection 
between 
$$
\{\text{thick right ideals of $\stmod(A)$}\} \begin{array}{c} {\Phi_\sigma} \atop {\longrightarrow} \\ {\longleftarrow}\atop{\Theta_\sigma} \end{array}  
\XX_{sp}(\operatorname{Proj}(\operatorname{H}^{\bullet}(A,\kk))).
$$
\end{theorem}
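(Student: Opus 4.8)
The plan is to obtain the statement as the special case of Theorem~\ref{I:bijectiongeneral-r} in which $\bK=\StMod(A)$, $\bK^{c}=\stmod(A)$, $X=\operatorname{Proj}(\operatorname{H}^{\bullet}(A,\kk))$ and $\sigma$ is the cohomological support \eqref{coh-supp} together with its Benson--Iyengar--Krause extension to $\StMod(A)$. Thus the proof amounts to checking the hypotheses of that theorem, most of which have already been recorded in the present subsection. Namely: $\StMod(A)$ is a compactly generated M$\Delta$C with compact part $\stmod(A)$ by \cite[Example 1.48]{BIK2}; the $(fg)$ assumption makes $R=\operatorname{H}^{\bullet}(A,\kk)$ a finitely generated graded $\kk$-algebra, so that $X=\operatorname{Proj}(R)$ is a Noetherian, hence Zariski, space, and so that each $\sigma(M)$ with $M\in\stmod(A)$ is closed, being $\operatorname{Proj}$ of the support of the finitely generated graded $R$-module $\operatorname{Ext}_A^{\bullet}(M,M)$; the realization property (7.1.\ref{E:supportthree}) holds via the Carlson modules $L_{\zeta}$ and Proposition~\ref{T:Lzetatensor}; and the faithfulness property (7.1.\ref{E:supporttwo}) holds because $\sigma(N)=\varnothing$ if and only if $N$ is projective, i.e. $N\cong 0$ in $\StMod(A)$, by \cite[Proposition 2.3]{FW1}. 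The extension of $\sigma$ to $\StMod(A)$ satisfying conditions (a), (b'), (c), (d) for an extended quasi support datum is supplied by \cite{BIK1} (after intersecting with $\operatorname{Proj}(\operatorname{H}^{\bullet}(A,\kk))$), and Assumption~\ref{A:keyproj} is part of the hypothesis.

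The one remaining point, and the \emph{crux} of the proof, is that the restriction of $\sigma$ to $\stmod(A)$ is a quasi support datum in the sense of \deref{nc-q-support}. Axioms (a)--(d) are routine: $\operatorname{Ext}_A^{\bullet}(0,0)=0$ and $\operatorname{Ext}_A^{\bullet}(\kk,\kk)=R$ give (a), the shift being an autoequivalence gives (c), and (b) and (d) follow from the standard containment $\operatorname{supp}_R\operatorname{Ext}_A^{\bullet}(M,N)\subseteq\sigma(M)\cap\sigma(N)$ together with the long exact sequences obtained by applying $\operatorname{Ext}_A^{\bullet}(-,L)$ and $\operatorname{Ext}_A^{\bullet}(L,-)$ to a distinguished triangle. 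The substantive axiom is (e), $\sigma(M\otimes N)\subseteq\sigma(M)$ for all $M,N\in\stmod(A)$. I would establish this by observing that the exact functor $-\otimes N$ carries self-extensions of $M$ to self-extensions of $M\otimes N$, hence induces a ring homomorphism $\operatorname{Ext}_A^{\bullet}(M,M)\to\operatorname{Ext}_A^{\bullet}(M\otimes N,M\otimes N)$; composing with $R\xrightarrow{-\otimes M}\operatorname{Ext}_A^{\bullet}(M,M)$ and using the associativity constraint $\zeta\otimes\id_{M\otimes N}\cong(\zeta\otimes\id_M)\otimes\id_N$ shows that the $R$-module structure on $\operatorname{Ext}_A^{\bullet}(M\otimes N,M\otimes N)$ factors through $\operatorname{Ext}_A^{\bullet}(M,M)$. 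Consequently, if $P$ is a homogeneous prime with $\operatorname{Ext}_A^{\bullet}(M,M)_P=0$, then some $r\notin P$ annihilates $\id_M$ and therefore annihilates all of $\operatorname{Ext}_A^{\bullet}(M\otimes N,M\otimes N)$, so the latter localizes to $0$ at $P$; passing to $\operatorname{Proj}$ gives $\sigma(M\otimes N)\subseteq\sigma(M)$. Once this is known, the one-sided analogue of \leref{gamma-ideal} (namely \eqref{q-sup-sig-Phi}) yields $\Phi_\sigma(\langle C\rangle_{\mathrm{r}})=\sigma(C)$, which is closed; this supplies the last hypothesis of Theorem~\ref{I:bijectiongeneral-r}.

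With all hypotheses verified, Theorem~\ref{I:bijectiongeneral-r} gives that $\Phi_\sigma$ and $\Theta_\sigma$ are mutually inverse bijections between the thick right ideals of $\stmod(A)$ and $\XX_{sp}(\operatorname{Proj}(\operatorname{H}^{\bullet}(A,\kk)))$, which is the assertion; \reref{q-support-no-monoid} further records that these are isomorphisms of ordered monoids exactly when $\sigma$ is a support datum. The main obstacle is axiom (e): in the non-cocommutative setting there is no braiding available to move $N$ past $M$, so the tensor--support inclusion cannot be reduced to the symmetric case and must be argued through the $\operatorname{Ext}$-module structure as above; relatedly, one must keep careful track of which side objects are tensored on, since the cohomological support $\sigma(M)=\operatorname{Proj}(\operatorname{supp}_R\operatorname{Ext}_A^{\bullet}(M,M))$ is insensitive to this whereas the passage to thick \emph{right} ideals encoded by $\Phi_\sigma$ and $\Theta_\sigma$ is not.
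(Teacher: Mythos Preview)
Your proposal is correct and follows essentially the same approach as the paper: both deduce the result from Theorem~\ref{I:bijectiongeneral-r} by checking its hypotheses for $\bK=\StMod(A)$, and the paper carries out precisely those verifications (compact generation, the Zariski property of $\operatorname{Proj}(R)$, closedness of $\sigma(M)$ for compact $M$, faithfulness via \cite[Proposition 2.3]{FW1}, realization via the $L_\zeta$ modules and Proposition~\ref{T:Lzetatensor}, and the BIK extension to $\StMod(A)$) in the discussion immediately preceding the statement, then simply records the theorem. The one place you go beyond the paper is in supplying an explicit argument for axiom~(e), $\sigma(M\otimes N)\subseteq\sigma(M)$, via the factorization of the $R$-action on $\operatorname{Ext}_A^\bullet(M\otimes N,M\otimes N)$ through $\operatorname{Ext}_A^\bullet(M,M)$; the paper treats this as part of the standard package for cohomological support and does not spell it out.
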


\section{Small quantum group for Borel subalgebras} 
\label{smallq-Borel}
In this section, we use the method from Section~\ref{S:onesided} to classify the thick right ideals of the stable module categories of the small quantum groups for the Borel subalgebras of 
all complex simple Lie algebras. Based on these results, 
we then provide a classification of the thick two-sided ideals of these categories and an explicit description of their Balmer's spectra.

\subsection{Small quantum group for Borel subalgebras} Let ${\mathfrak g}$ be a complex simple Lie algebra and $\zeta$ be a primitive $\ell$th root of unity in ${\mathbb C}$. Assume that 
$\ell >h$ where $h$ is the Coxeter number associated to the root system of ${\mathfrak g}$. Let ${\mathfrak b}$ be the Borel subalgebra of ${\mathfrak g}$ 
corresponding to taking the negative root vectors and ${\mathfrak u}$ be the unipotent radical of ${\mathfrak b}$. Let $u_{\zeta}({\mathfrak b})$ 
(resp. $u_{\zeta}({\mathfrak g})$) be the small quantum group for ${\mathfrak b}$ (resp. ${\mathfrak g}$). Moreover, let $U_{\zeta}({\mathfrak b})$ (resp. 
$U_{\zeta}({\mathfrak g})$) be the Lusztig ${\mathcal A}$-forms specialized at ${\zeta}$. The later algebras are often referred to as the big quantum group. 

The Steinberg module $\text{St}$ is a projective and injective module in $\modd(U_{\zeta}({\mathfrak g}))$. This fact allows one to 
prove that there exists enough projectives in the category and the module category is self injective. Therefore, one can consider the 
stable module category of finite-dimensional modules, $\stmod(U_{\zeta}({\mathfrak g}))$. This category is a symmetric M$\Delta$C. 
A classification of thick tensor ideals and the computation of the Balmer spectrum was achieved in \cite{BKN2}. 

The Steinberg module $\text{St}$ does not remain projective upon restriction to $\text{mod}(U_{\zeta}({\mathfrak b}))$. Thus, the 
construction of the stable module category does not follow immediately from the aforementioned argument. The small quantum 
group is a finite-dimensional Hopf algebra which means that it is self-injective, and in this case one can discuss the stable module category. 

For $\stmod(u_{\zeta}({\mathfrak g}))$ the classification of thick tensor ideals and computation of the Balmer spectrum remain as 
open questions. In this section we will address these issues for $\stmod(u_{\zeta}({\mathfrak b}))$. 

\subsection{Action of $\Pi$} \label{S:actionpi} Denote for brevity
\[
A=u_{\zeta}({\mathfrak b}).
\]
Let $X_1$ denote $X(T)/ \ell X(T)$, where $X(T)$ is the weight lattice corresponding to a maximal split torus $T$. For each $\lambda \in X_{1}$, define an automorphism $\gamma_\lambda$ of $A$ which is defined by $\gamma_\lambda (E_\alpha)= \zeta^{\langle \lambda, \alpha \rangle} E_\alpha$ for all $\alpha \in \Phi^{+}$ and $\gamma_\lambda(K_i)=K_i$ for $i=1, \dotsc ,n$. Here $\Phi^{+}$ are the set of positive roots in the set of roots $\Phi$ and 
$\langle -, - \rangle$ denotes the inner product on the Euclidean space spanned by the roots.

Let $\Pi= \{ \gamma_{\lambda} :\ \lambda \in X_{1} \}$, which is a group under composition. Moreover, $\Pi$ acts on the cohomology ring $R=\operatorname{H}^{2\bullet}(A,\kk)\cong S^{\bullet}({\mathfrak u}^{*})$. The action of $\lambda$ is given explicitly on homogeneous elements of $R$ (interpreted as $n$-fold extensions) by sending each module $M$ to the twist $M^{\gamma_\lambda}$, a new module with action $a.m=\gamma_\lambda(a) m$ for all $a \in A$ and $m \in M^{\gamma_\lambda}.$ Since $\kk^{\gamma_\lambda} \cong \kk$, which follows from the definitions of the coproduct and counit on $A$, this action sends an $n$-fold extension of $\kk$ by $\kk$ to another such extension.

The following result demonstrates that the action of $\Pi$ on the stable module category for the small quantum Borel algebra is trivial. Let 
$X(T)/{\mathbb Z}\Phi$ denote the weight lattice modulo the root lattice, and $({\ell},|X(T)/{\mathbb Z}\Phi |)=\text{gcd}({\ell},|X(T)/{\mathbb Z}\Phi |)$. 

\begin{theorem}  
\label{T:triviality-Pi-act}
Let $u_{\zeta}({\mathfrak b})$ be the small quantum group for the Borel subalgebra ${\mathfrak b}$ with ${\ell}>h$ and $({\ell},|X(T)/{\mathbb Z}\Phi |)=1$.  
Then the action of $\Pi$ on $\operatorname{Proj}(S^{\bullet}({\mathfrak u}^{*}))$ is trivial. 
\end{theorem}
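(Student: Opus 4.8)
The plan is to prove something slightly stronger than the stated assertion, namely that every $\gamma_{\lambda}\in\Pi$ acts as the \emph{identity} on $R=\operatorname{H}^{2\bullet}(A,\kk)$ (indeed on all of $\operatorname{H}^{\bullet}(A,\kk)$); triviality of the induced action on $\operatorname{Proj}(S^{\bullet}(\mathfrak u^{*}))\cong\operatorname{Proj}(R)$ is then immediate, the identification $R\cong S^{\bullet}(\mathfrak u^{*})$ being the Ginzburg--Kumar one, which is the only place the hypothesis $\ell>h$ is used. Recall from the discussion preceding the theorem that $\Pi$ acts on $R=\Ext_{A}^{\bullet}(\kk,\kk)$ precisely through the twisting endofunctors $M\mapsto M^{\gamma_{\lambda}}$ of $\modd(A)$ (using $\kk^{\gamma_{\lambda}}\cong\kk$), so the claim amounts to showing that each such twist functor induces the identity on $\Ext_{A}^{\bullet}(\kk,\kk)$.

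First I would reduce to automorphisms coming from the torus of $A$. Since $\zeta^{\ell}=1$, the automorphism $\gamma_{\lambda}$ depends only on the class of $\lambda$ in $X_{1}=X(T)/\ell X(T)$, and the hypothesis $(\ell,|X(T)/\mathbb Z\Phi|)=1$ makes multiplication by $\ell$ a bijection of the finite group $X(T)/\mathbb Z\Phi$; hence $X(T)=\ell X(T)+\mathbb Z\Phi$. Writing a representative of $\lambda$ as $\ell\mu+\nu$ with $\mu\in X(T)$ and $\nu\in\mathbb Z\Phi$ gives $\zeta^{\langle\lambda,\alpha\rangle}=\zeta^{\langle\nu,\alpha\rangle}$ for every root $\alpha\in\Phi^{+}$, so $\gamma_{\lambda}=\gamma_{\nu}$, and we may assume $\nu\in\mathbb Z\Phi$. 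The key point is then that for $\nu$ in the root lattice the automorphism $\gamma_{\nu}$ is \emph{inner}: the group-like element $K_{\nu}$ belongs to $A=u_{\zeta}(\mathfrak b)$ (the torus of the small quantum Borel is the group algebra of $\mathbb Z\Phi/\ell\mathbb Z\Phi$), and conjugation by $K_{\nu}$ scales each $E_{\alpha}$ by exactly the scalar $\zeta^{\langle\nu,\alpha\rangle}$ occurring in the defining relations of $A$ while fixing every $K_{i}$, so that $\gamma_{\nu}=\Ad(K_{\nu})$ as automorphisms of $A$. Finally, for any invertible group-like $g$ in a Hopf algebra the map $m\mapsto gm$ is a natural isomorphism $\mathrm{id}\Rightarrow(-)^{\Ad(g)}$ of endofunctors of $\modd(A)$ which restricts to $\mathrm{id}_{\kk}$ on the unit object because $\epsilon(g)=1$; consequently $(-)^{\Ad(g)}$ induces the identity on $\Ext_{A}^{\bullet}(\kk,\kk)$. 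Applying this with $g=K_{\nu}$ shows that $\gamma_{\nu}$, hence every $\gamma_{\lambda}$, acts trivially on $R$, which finishes the proof.

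I expect the only genuine obstacle to be the bookkeeping in the middle step: one must pin down the precise version of the small quantum Borel and its torus so that $K_{\nu}$ is genuinely a group-like of $A$ realizing $\gamma_{\nu}$ by conjugation, and reconcile the normalization of the inner product $\langle-,-\rangle$ with the exponents in the commutation relations of $u_{\zeta}(\mathfrak b)$ (if these differ by an integer factor one simply replaces $\nu$ by an appropriate integer multiple, still in $\mathbb Z\Phi$). Everything else is formal: the hypothesis $(\ell,|X(T)/\mathbb Z\Phi|)=1$ is exactly what is needed to absorb the non-root-lattice part of $\lambda$ into $\ell X(T)$, and the triviality of an inner twist on cohomology is the standard consequence of a twist functor being naturally isomorphic to the identity together with fixing the unit object.
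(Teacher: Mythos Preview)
Your argument is correct and takes a genuinely different route from the paper. The paper works on the \emph{cohomology side}: using the bar resolution of $u_{\zeta}(\mathfrak u)$ together with the Ginzburg--Kumar identification $R\cong S^{\bullet}(\mathfrak u^{*})^{(1)}$, it shows that every weight occurring in $R$ lies in $\ell\,\mathbb Z\Phi$ (the bar resolution forces the weight into $\mathbb Z\Phi$, the Frobenius twist forces it into $\ell X(T)$, and the gcd hypothesis then puts it into $\ell\,\mathbb Z\Phi$); for $f$ of weight $\ell\sigma$ with $\sigma\in\mathbb Z\Phi$ one gets $\gamma_{\lambda}(f)=\zeta^{\ell\langle\lambda,\sigma\rangle}f=f$. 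By contrast, you work on the \emph{group side}: the gcd hypothesis is used to reduce $\lambda$ modulo $\ell X(T)$ to an element $\nu\in\mathbb Z\Phi$, after which $\gamma_{\nu}=\Ad(K_{\nu})$ is inner and the twist functor $(-)^{\Ad(K_{\nu})}$ is naturally isomorphic to the identity by $m\mapsto K_{\nu}m$, with $\phi_{\kk}=\id_{\kk}$ since $\epsilon(K_{\nu})=1$, so the induced action on $\Ext_A^{\bullet}(\kk,\kk)$ is trivial. Your approach is more conceptual and does not need the explicit description of $R$ (Ginzburg--Kumar enters only to translate the conclusion back to $\operatorname{Proj}(S^{\bullet}(\mathfrak u^{*}))$); the paper's approach, on the other hand, yields finer information, namely the precise scalar by which $\gamma_{\lambda}$ acts on each weight component. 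The bookkeeping caveat you flag is harmless here: the pairing $\langle-,-\rangle$ in the paper is the invariant form governing the relations $K_{i}E_{\alpha}K_{i}^{-1}=\zeta^{\langle\alpha_{i},\alpha\rangle}E_{\alpha}$, so $\gamma_{\nu}=\Ad(K_{\nu})$ on the nose for $\nu\in\mathbb Z\Phi$.
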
 

\begin{proof} It suffices to prove that the action of $\Pi$ on the cohomology 
ring $R$ is trivial. This action can be described as follows. 

If $\lambda \in X_{1}$, $a\in {\mathbb C}^{*}$, one can define an automorphism $\widehat{\gamma}_{\lambda}$ of $A$: $\widehat{\gamma}_\lambda (E_\alpha)= a^{\langle \lambda, \alpha \rangle} E_\alpha$ for all $\alpha \in \Phi^{+}$ and $\widehat{\gamma}_\lambda(K_i)=K_i$ for $i=1, \dotsc ,n$. Note that this is also an automorphism on the subalgebra, $u_{\zeta}({\mathfrak u})$ which is generated by $E_{\alpha}$, $\alpha\in \Phi^{+}$. When $a=\zeta$ where $\zeta$ is a primitive ${\ell}$th root of unity, $\widehat{\gamma}=\gamma$. 

This automorphism $\widehat{\gamma}$ acts on the bar resolution of $u_{\zeta}({\mathfrak u})$ (cf. \cite[Section 2.8]{BNPP}) and the differentials respect the action of this automorphism. Therefore, the automorphism will act on the cohomology $\operatorname{H}^{\bullet}(u_{\zeta}({\mathfrak u}),{\mathbb C})$. 
One has that $R=\operatorname{H}^{\bullet}(u_{\zeta}({\mathfrak u}),{\mathbb C})^{u_{\zeta}({\mathfrak t})}=S^{\bullet}({\mathfrak u}^{*})^{(1)}$ \cite[Theorem 2.5]{GK1} where $u_{\zeta}({\mathfrak t})$ is the subalgebra of $A$ generated by $K_{i}$ for $i=1, \dotsc, n$. The twist means that all the weights in the cohomology of $R$ are of the form ${\ell} \sigma$ where $\sigma$ is in the weight lattice. From the bar resolution, we can conclude that ${\ell}\sigma$ must be in the root lattice. Since $({\ell},|X(T)/{\mathbb Z}\Phi |)=1$, it follows that 
$\sigma$ is in the root lattice.  

Let $f\in R$ of weight ${\ell}\sigma$. Then $f$ can be expressed as a sum of tensor products of elements in $u_{\zeta}({\mathfrak u})$ such that ${\ell}\sigma=\sum_{\alpha\in \Phi^{+}} n_{\alpha}\alpha$. Therefore, the automorphism $\widehat{\gamma}_{\lambda}(f)=a^{\langle \lambda, {\ell}{\sigma} \rangle}=(a^{{\ell}})^{\langle \lambda, \sigma \rangle}$. 
Specializing $a=\zeta$ shows that $\gamma$ acts trivially on $R$. 
\end{proof} 

\subsection{Classification of one-sided (right) ideals:} The small quantum group $A=u_{\zeta}({\mathfrak b})$ is a finite-dimensional Hopf algebra over ${\mathbb C}$. For ${\ell}>h$ ($h$ is the Coxeter number for the underlying root system), the (fg) assumption holds and the cohomology 
ring $\operatorname{H}^{2\bullet}(A,{\mathbb C})\cong S^{\bullet}({\mathfrak u}^{*})$. The odd degree cohomology is zero. 

The Assumption~\ref{A:keyproj} holds by \cite[Section 7.4]{BKN2} as long as $W(M)=W(M^{*})$ for every $M\in \stmod(u_{\zeta}({\mathfrak b}))$, where $W=\sigma$ is 
the cohomological support. We can prove this under the assumption that $\Pi$ acts trivially on $R$ (i.e., $({\ell},|X(T)/{\mathbb Z}\Phi |)=1$). In checking 
Assumption~\ref{A:keyproj} for $M=0$, we also use the well-known faithfulness property for the cohomological support. 

First note that $M\in \stmod(u_{\zeta,\Gamma}({\mathfrak b}))$ is rigid, thus $M$ is a summand of $M \otimes M^* \otimes M$, and 
\[
W(M) \subseteq W(M \otimes M^* \otimes M).
\]
Using the fact that  $M$ has a composition series by subquotients isomorphic to the one dimensional modules 
$\lambda\in X_{1}$, one has 
\[
W(M \otimes M^* \otimes M) = \bigcup_{\lambda\in X_{1}} W(\lambda \otimes M^* \otimes M).
\]
The cohomological support $W$ is a quasi support datum, and $\Pi$ act trivially on supports. Consequently, 
\[
W(\lambda \otimes M^* \otimes M) \subseteq W((M^*)^{\gamma_\lambda} \otimes \lambda \otimes M) \subseteq W((M^*)^{\gamma_\lambda}) = \gamma_\lambda^{-1}\cdot W(M^*)=W(M^*)
\]
for all $\lambda\in X_{1}$. Combining the above inclusions gives $W(M) \subseteq W(M^*)$. The other inclusion is proved by interchanging $M$ with $M^{*}$ and using 
the fact that $(M^{*})^{*} \cong M$. 

We can therefore employ Theorem~\ref{T:fdHopfalgebras} to give a classification of one-sided tensor ideals for the stable module category 
of $u_{\zeta}({\mathfrak b})$. 

\begin{theorem} \label{T:one-sidedborel} Let $u_{\zeta}({\mathfrak b})$ be the small quantum group for the Borel subalgebra ${\mathfrak b}$ with ${\ell}>h$ and 
$\gcd({\ell},|X(T)/{\mathbb Z}\Phi |)=1$. Then there exists a bijection between 
$$
\{\text{thick right ideals of $\stmod(u_{\zeta}({\mathfrak b}))$}\} \begin{array}{c} {\Phi} \atop {\longrightarrow} \\ {\longleftarrow}\atop{\Theta} \end{array}  
\{\text{specialization closed sets of $\operatorname{Proj}(S^{\bullet}({\mathfrak u}^{*}))$} \}. 
$$
\end{theorem}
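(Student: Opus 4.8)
The plan is to obtain the statement as a direct application of Theorem~\ref{T:fdHopfalgebras} to the finite-dimensional Hopf algebra $A=u_\zeta(\mathfrak{b})$ over $\mathbb{C}$; what then remains is to check the two hypotheses of that theorem for $A$, namely the (fg) Assumption~\ref{A:fg} and Assumption~\ref{A:keyproj}. For the first, I would recall that for $\ell>h$ the algebra $A$ satisfies (fg) and that $\operatorname{H}^\bullet(A,\mathbb{C})\cong\operatorname{H}^{2\bullet}(A,\mathbb{C})\cong S^\bullet(\mathfrak{u}^*)$, with odd cohomology vanishing, so that $\operatorname{Proj}(\operatorname{H}^\bullet(A,\mathbb{C}))=\operatorname{Proj}(S^\bullet(\mathfrak{u}^*))$; as in the discussion preceding the theorem, the cohomological support $\sigma=W$ of \eqref{coh-supp} is then a quasi support datum on $\stmod(A)$ with $\sigma(M)\in\mathcal{X}_{cl}$ for all $M$, and it satisfies the faithfulness property and, via Proposition~\ref{T:Lzetatensor}, the realization property.

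The substantive point is to verify Assumption~\ref{A:keyproj}. Here I would invoke \cite[Section 7.4]{BKN2}, according to which this assumption holds as soon as $W(M)=W(M^*)$ for every $M\in\stmod(A)$, the degenerate case $M=0$ being handled by faithfulness of $W$. To prove the equality $W(M)=W(M^*)$, I would proceed in three steps: (i) by rigidity, $M$ is a direct summand of $M\otimes M^*\otimes M$, hence $W(M)\subseteq W(M\otimes M^*\otimes M)$; (ii) since $M$ has a composition series whose subquotients are the one-dimensional modules $\lambda\in X_1$, the triangulated axioms for a quasi support datum give $W(M\otimes M^*\otimes M)=\bigcup_{\lambda\in X_1}W(\lambda\otimes M^*\otimes M)$; (iii) using the standard isomorphism $\lambda\otimes M^*\cong (M^*)^{\gamma_\lambda}\otimes\lambda$ for a one-dimensional module, property (e) of quasi support data, and the transformation of supports under an automorphism twist, one obtains for each $\lambda$ that $W(\lambda\otimes M^*\otimes M)\subseteq W((M^*)^{\gamma_\lambda}\otimes\lambda\otimes M)\subseteq W((M^*)^{\gamma_\lambda})=\gamma_\lambda^{-1}\cdot W(M^*)$.

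At this point Theorem~\ref{T:triviality-Pi-act} enters, and it is precisely here that the arithmetic hypothesis $\gcd(\ell,|X(T)/\mathbb{Z}\Phi|)=1$ is needed: it asserts that $\Pi$ acts trivially on $\operatorname{Proj}(S^\bullet(\mathfrak{u}^*))$, hence $\gamma_\lambda^{-1}\cdot W(M^*)=W(M^*)$. Chaining the inclusions of steps (i)--(iii) yields $W(M)\subseteq W(M^*)$, and the reverse inclusion follows by symmetry, replacing $M$ by $M^*$ and using $(M^*)^*\cong M$. With both hypotheses of Theorem~\ref{T:fdHopfalgebras} verified, that theorem delivers the pair of mutually inverse, inclusion-preserving bijections $\Phi_\sigma$ and $\Theta_\sigma$ between the thick right ideals of $\stmod(u_\zeta(\mathfrak{b}))$ and the specialization-closed subsets of $\operatorname{Proj}(S^\bullet(\mathfrak{u}^*))$, which is the assertion.

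I expect the main obstacle to be the identity $W(M)=W(M^*)$, in particular step (iii) together with the appeal to Theorem~\ref{T:triviality-Pi-act}: because $u_\zeta(\mathfrak{b})$ is highly non-cocommutative, a module and its dual are genuinely distinct objects, and the only route I see to relating their cohomological supports is the reduction to one-dimensional twists $(M^*)^{\gamma_\lambda}$ combined with the non-obvious fact that these twists do not move points of $\operatorname{Proj}(S^\bullet(\mathfrak{u}^*))$ — which is exactly what forces the coprimality hypothesis into the statement. Everything else is bookkeeping with the known cohomology of the small quantum Borel and the axioms of a quasi support datum.
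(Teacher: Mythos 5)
Your proposal is correct and follows essentially the same argument as the paper: reduce to Theorem~\ref{T:fdHopfalgebras} by checking the (fg) assumption and Assumption~\ref{A:keyproj}, verify the latter via \cite[Section 7.4]{BKN2} by proving $W(M)=W(M^*)$, and establish that identity by the same three-step chain (rigidity, composition series by one-dimensional simples, conjugation isomorphism plus triviality of the $\Pi$-action from Theorem~\ref{T:triviality-Pi-act}), with the reverse inclusion obtained by symmetry using $(M^*)^*\cong M$. This matches the paper's proof in both structure and detail.
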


\subsection{Classification of two-sided ideals and the Balmer spectrum} 
First, we fix some notation:
\begin{enumerate}
\item[(i)] The cohomology ring of $A=u_{\zeta}(\mathfrak{b})$ will be denoted by
$$R=\Ext^\bullet_A(\kk,\kk)=\operatorname{H}^\bullet(A, \kk).$$
\item[(ii)] The space of (nontrivial) homogeneous prime ideals of $R$ will be denoted by
$$X=\Proj R.$$
The set of subsets, closed subsets, and specialization-closed subsets of $X$ will be denoted respectively by $\mc{X}, \mc{X}_{cl},$ and $\mc{X}_{sp}$.
\item[(iii)] We will use the support function on $\StMod (A)$ from \cite{BIK1}, using the cohomology ring $R$ defined above. It is defined using the localization and colocalization functors from Theorem \ref{T:localizationtriangles}.
This support will be denoted by
$$\widetilde W (-): \StMod(A) \to \mc{X}_{sp}.$$ 
It extends the corresponding cohomological support functions from \eqref{coh-supp}. 

\item[(iv)] The map $\Phi$ associated to the support $\widetilde W$ will be denoted
by $\Phi_{\widetilde{W}}$. It takes thick subcategories of $\StMod(A)$ to subsets of 
$\mc{X}$. 
\end{enumerate}

In order to explicitly describe the Balmer spectrum of $\stmod(A)$, we must produce a 
support having the faithfulness and realization properties \eqref{EE:supportone}--\eqref{EE:supporttwo}. 
By \cite[Theorem 5.2]{BIK1}, the weak support datum $\widetilde{W} : \StMod(A) \to \mc{X} (\Proj (R))$
has the faithfulness property \eqref{EE:supportone}.
However, to get the realization property (\ref{EE:supporttwo}), we will need to consider a new support datum built from $\widetilde W$. 

Each $\lambda \in X_1$ corresponds to a one-dimensional simple module, which we will denote again by $\lambda$, where $E_\alpha$ acts by 0, and $K_i$ acts by $\zeta^{\langle \lambda, \alpha_i \rangle}.$ All the simple modules of $A$ arise in this way. Using the definition of the coproduct on $A$, one can directly verify that there exists an isomorphism of $A$-modules:
\begin{equation} \label{e:twistiso} 
\lambda \otimes Q \otimes \lambda^{-1} \cong Q^{\gamma_{\lambda}}.
\end{equation} 
Hence, the action of $\Pi$ on $R$ can be realized as conjugation by the module $\lambda$ under the tensor product. 

From the action of $\Pi$ on $R$, one can construct the stack quotient $X_{\Pi}:=\Pi$-$\operatorname{Proj}(R)$,
the space of nonzero homogeneous $\Pi$-prime ideals of $R$, as studied by Lorenz in \cite{Lorenz1}. These are nonzero $\Pi$-invariant homogeneous ideals $P$ of $R$ that have the property 
$I J \subseteq P \Rightarrow I \subseteq P$ or $J \subseteq P$ for 
all $\Pi$-invariant homogeneous ideals $I,J$ of $R$. The stack quotient $X_\Pi$ is a Zariski space by the argument in \cite[Section 2.3]{BKN1}.
The space of $\Pi$-orbits in $\Proj(R)$ will be denoted by
$$\widetilde X_\Pi=\Pi\backslash \Proj(R).$$ By \cite[Section 1.3]{Lorenz1}, there are maps
\begin{center}
\begin{tikzcd}
                                      &  &                                               & \mf{p} \arrow[lllddd, maps to] \arrow[rrrddd, maps to]          &     &  &                                 \\
                                      &  &                                               & X \arrow[ld, "\pi_1", two heads] \arrow[rd, "\pi", two heads] &     &  &                                 \\
                                      &  & \widetilde X_{\Pi} \arrow[rr, "\pi_2", two heads] &                                                                 & X_\Pi &  &                                 \\
\Pi\cdot \mf{p} \arrow[rrrrrr, maps to] &  &                                               &                                                                 &     &  & \bigcap_{x \in \Pi} x\cdot \mf{p}
\end{tikzcd}
\end{center}
and the topologies on $\widetilde X_\Pi$ and $X_\Pi$ are defined to be the final topologies with respect to the surjections from $X$.

Denote 
$$
W=\pi \circ \widetilde W : \StMod(A) \to \mc{X}(X_\Pi).
$$ 
This is a quasi support datum.
Then $\Phi_W$ will denote the associated map given by \eqref{Phi}.

The quasi support datum $W$ clearly satisfies $W(M)\in {\mathcal X}_{cl}(X_\Pi)$ for $M\in \bK^{c}$.
Next we will verify (5.1.\ref{E:supporttwo}). 
One has $\Phi_W(\langle M \rangle)=\varnothing$ if and only if $W(N)=\varnothing$ for all $N\in \langle M \rangle$ if and only if 
$N=0$ for all $N\in \langle M \rangle$  if and only if $M=0$. Note that the last if and only if statement 
holds because if $P$ is a projective $A$-module then $C\otimes P$ and $P\otimes C$ are projective $A$-modules for any $C\in \bK^{c}$.

Finally, let $M$ be an object in $\bK^{c}$. Note that by the quasi support property on tensor products and the fact that $\lambda \otimes \lambda^{-1} \cong \kk$, we have
$$\widetilde W(\lambda \otimes M) = \widetilde W( \lambda \otimes M \otimes \lambda^{-1} \otimes \lambda ) \subseteq \widetilde W( \lambda \otimes M \otimes \lambda^{-1})  \subseteq \widetilde W( \lambda \otimes M),$$
which then implies by (\ref{e:twistiso}) that
\begin{equation} \label{e:qtens-eq}
\widetilde W(\lambda \otimes M) = \widetilde W( M^{\gamma_\lambda}).
\end{equation} 
Now we can observe
$$\Phi_{\widetilde{W}}(\langle M \rangle)=\bigcup_{C,D\in \bK^{c}} \widetilde{W}(C\otimes M \otimes D)=\bigcup_{C\in \bK^{c}} \widetilde{W}(C\otimes M).$$ 
The last equality follows because $\widetilde W$ is a quasi support datum. Next observe that since the simple modules for $A$ are given by $\{\lambda: \lambda\in X_{1}\},$ by the property of the quasi support on 
short exact sequence, and by (\ref{e:twistiso}) and (\ref{e:qtens-eq}), one obtains 
$$\bigcup_{C\in \bK^{c}} \widetilde{W}(C\otimes M)=\bigcup_{\lambda\in X_{1}} \widetilde{W}(\lambda \otimes M)=\bigcup_{\lambda\in X_{1}} \widetilde{W}(M^{\gamma_{\lambda}})=
\bigcup_{\lambda\in X_{1}} \gamma_{\lambda}^{-1}\cdot \widetilde{W}(M).$$ 
Therefore, 
\begin{equation} 
\Phi_{\widetilde{W}}(\langle M \rangle)=\Pi\cdot \widetilde{W}(M).
\end{equation}  
We can conclude that $\Phi_W$ takes two-sided ideals in $\bK^{c}$ to specialization closed sets in $X_\Pi=\Pi$-$\operatorname{Proj}(R)$, and that the extension $W$ of $\widetilde{W}$ satisfies the conditions \eqref{EE:supportone}--\eqref{EE:supporttwo}. 

At this point we will assume that the action of $\Pi$ on the cohomology ring $R$ is trivial. Then (i) $W=\widetilde{W}$ and (ii) $\widetilde{W}(T)=\widetilde{W}(T^{*})$ for any $T\in \stmod(u_{\zeta}({\mathfrak b}))$. 
Since we know only that $\widetilde{W}$ is a quasi support, rather than a weak support, one needs to verify an appropriate version of Assumption~\ref{A:keyproj} for Hopkins' Theorem to hold. 
The statement needed to apply the proof in Theorem~\ref{T:Hopkinstheorem} is given as follows: if  $L_{\bI'}\Gamma_{\bI}(N) \otimes Q^* \otimes M^*=0$ for all $Q\in \stmod(u_{\zeta}({\mathfrak b}))$ then 
$L_{\bI'}\Gamma_{\bI}(N)=0$. In the proof of Theorem~\ref{T:Hopkinstheorem}, it was shown that 
$$\widetilde{W}(L_{\bI'}\Gamma_{\bI}(N))\subseteq Y=\Phi_{\widetilde{W}}(\langle M \rangle)=\bigcup_{Q}\widetilde{W}(Q\otimes M).$$ 
Since $L_{\bI'}\Gamma_{\bI}(N) \otimes Q^* \otimes M^*=0$, it follows by \cite[Theorem 6.2.1]{BKN2} that 
$$\varnothing=\widetilde{W}(L_{\bI'}\Gamma_{\bI}(N))\cap [\cup_{Q} \widetilde{W}(Q^* \otimes M^*)]=\widetilde{W}(L_{\bI'}\Gamma_{\bI}(N))\cap [\cup_{Q} \widetilde{W}(Q \otimes M)]=
\widetilde{W}(L_{\bI'}\Gamma_{\bI}(N))\cap Y.$$ 
Therefore, $\widetilde{W}(L_{\bI'}\Gamma_{\bI}(N))=\varnothing$ and $L_{\bI'}\Gamma_{\bI}(N)=0$. 

Now one can apply the argument given in Theorem~\ref{I:bijectiongeneral}(c)(i) to conclude that $\Phi_{\widetilde{W}}$ is an order-preserving bijection between thick two-sided ideals of $\stmod(u_{\zeta}(\mf{b}))$ and specialization-closed sets of $\Proj(S^\bullet(\mf{u}^*))$. Then this implies that $\widetilde{W}$ is not just a quasi support, but in fact a weak support, by the following argument, which is a noncommutative version of \cite[Proposition 6.2.1]{BKN2}. 

First observe that every thick ideal $\bI$ of $\stmod(u_{\zeta}(\mf{b}))$ is semiprime. As noted in the proof of Lemma \ref{dual-ideal}, each compact object $V$ is a direct summand of $V \otimes V^* \otimes V$. This implies 
\[
V \otimes \stmod(u_{\zeta}(\mf{b})) \otimes V \subseteq \bI \quad \Rightarrow  \quad V \otimes V^* \otimes V \in \bI \quad \Rightarrow \quad V \in \bI,
\] and by \thref{semiprime-equiv}, $\bI$ is semiprime. 

Next by the semiprimeness of $\langle \bI \otimes \bJ \rangle$ and $\bI \cap \bJ$, we have inclusions 
\begin{center}
\begin{tikzcd}
\langle \bI \otimes \bJ \rangle \arrow[r, hook] \arrow[d, no head, Rightarrow, no head] & \bI \cap \bJ \arrow[d, Rightarrow, no head]              \\
\bigcap_{\bI \otimes \bJ \subseteq \bP}\bP                                     & \bigcap_{\bI \cap \bJ \subseteq \bP} \bP \arrow[l, hook']
\end{tikzcd}
\end{center}
where the intersections range over $\bP$ in $\Spc(\stmod(u_{\zeta}(\mf{b})))$ satisfying the given conditions; hence $\langle \bI \otimes \bJ \rangle = \bI \cap \bJ$. Therefore,
\begin{align*} 
\Phi_{\widetilde{W}}(\langle \bI \otimes \bJ \rangle)&= \Phi_{\widetilde{W}}(\bI \cap \bJ)\\
& = \Phi_{\widetilde{W}}(\bI) \cap \Phi_{\widetilde{W}}(\bJ),
\end{align*}
where the second equality follows from the fact that $\Phi_{\widetilde{W}}$ is an order-preserving bijection between thick ideals and specialization-closed sets of $\Proj(S^\bullet(\mf{u}^*))$, and so $\widetilde{W}$ is a weak support datum. The theorem below now follows immediately.

\begin{theorem} \label{T:two-sidedborel-Balmer} Let $u_{\zeta}({\mathfrak b})$ be the small quantum group for the Borel subalgebra ${\mathfrak b}$ with ${\ell}>h$ and 
$\gcd({\ell},|X(T)/{\mathbb Z}\Phi |)=1$. The thick two-sided ideals of $\stmod(u_{\zeta}({\mathfrak b}))$ coincide with the thick one-sided ideals of $\stmod(u_{\zeta}({\mathfrak b}))$.
\begin{itemize} 
\item[(a)] Then there exists a bijection between 
$$
\{\text{thick two-sided ideals of $\stmod(u_{\zeta}({\mathfrak b}))$}\} \begin{array}{c} {\Phi} \atop {\longrightarrow} \\ {\longleftarrow}\atop{\Theta} \end{array}  
\{\text{specialization closed sets of $\operatorname{Proj}(S^{\bullet}({\mathfrak u}^{*}))$} \}. 
$$
\item[(b)] There exists a homeomorphism $f:$ $\operatorname{Proj}(S^{\bullet}({\mathfrak u}^{*})) \to \Spc(\stmod(u_{\zeta}({\mathfrak b})))$. 
\end{itemize} 
\end{theorem}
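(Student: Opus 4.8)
The plan is to deduce the theorem from the two classification results already established, namely Theorem~\ref{I:bijectiongeneral} (thick two-sided ideals and the Balmer spectrum) and Theorem~\ref{T:one-sidedborel} (thick right ideals), by verifying that all of their hypotheses hold for the cohomological support $\widetilde W$ on $\StMod(u_\zeta(\mathfrak b))$ and then comparing the two bijections. By the time we reach the statement, essentially all of the analytic content has been assembled in the preceding subsection; what is left is to organize it.

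First I would collect the inputs, all conditioned on the standing hypothesis $\gcd(\ell,|X(T)/\mathbb Z\Phi|)=1$. By Theorem~\ref{T:triviality-Pi-act} this forces $\Pi$ to act trivially on $R=\operatorname{H}^\bullet(u_\zeta(\mathfrak b),\kk)\cong S^\bullet(\mathfrak u^*)$, so that $W=\widetilde W$ and the relevant space is $X:=\operatorname{Proj}(S^\bullet(\mathfrak u^*))$; this $X$ is a Zariski space since $S^\bullet(\mathfrak u^*)$ is a finitely generated graded-commutative $\mathbb C$-algebra, and $\widetilde W$ has closed values on compact objects, with $\Phi_{\widetilde W}(\langle C\rangle)=\Pi\cdot\widetilde W(C)=\widetilde W(C)$ closed for $C\in\stmod(u_\zeta(\mathfrak b))$ (as computed above). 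The faithfulness property \eqref{EE:supportone} is \cite[Theorem 5.2]{BIK1}, and the realization property \eqref{EE:supporttwo} was verified using the modules $L_\zeta\in\bK^c$ together with Proposition~\ref{T:Lzetatensor}. Finally, triviality of the $\Pi$-action yields $\widetilde W(T)=\widetilde W(T^*)$ for all $T\in\stmod(u_\zeta(\mathfrak b))$, which is precisely the input needed to run the quasi-support version of the Hopkins argument of Theorem~\ref{T:Hopkinstheorem} in place of Assumption~\ref{A:keyproj}; the discussion preceding the statement uses this to prove that $\Phi_{\widetilde W}$ is an order-preserving bijection between the thick two-sided ideals of $\stmod(u_\zeta(\mathfrak b))$ and $\mc X_{sp}(X)$, and hence that $\widetilde W$ is an extended (two-sided) weak support datum satisfying \eqref{EE:supportone} and \eqref{EE:supporttwo}.

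Thus part (a) is exactly the bijection just recorded, and part (b) follows from Theorem~\ref{I:bijectiongeneral}(c)(iii): under precisely these hypotheses the universal map $f_{\widetilde W}\colon X\to\Spc(\stmod(u_\zeta(\mathfrak b)))$ of \thref{f-weak} and \prref{f-theta} is a homeomorphism.

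It remains to see that the thick right ideals and the thick two-sided ideals of $\stmod(u_\zeta(\mathfrak b))$ coincide, which I would argue by comparing the two bijections through the common map $\Phi\colon\mc S\mapsto\bigcup_{A\in\mc S}\widetilde W(A)$. By Theorem~\ref{T:one-sidedborel}, $\Phi$ restricts to a bijection from the set of thick right ideals onto $\mc X_{sp}(X)$; by part (a), $\Phi$ restricts to a bijection from the set of thick two-sided ideals onto the same target. Since every thick two-sided ideal is in particular a thick right ideal, for any thick right ideal $\bI$ there is, by (a), a thick two-sided ideal $\bJ$ with $\Phi(\bJ)=\Phi(\bI)$; injectivity of $\Phi$ on thick right ideals then forces $\bJ=\bI$, so $\bI$ is two-sided. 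The main obstacle to this whole argument lies not here but upstream — establishing triviality of the $\Pi$-action (Theorem~\ref{T:triviality-Pi-act}) and verifying the substitute for Assumption~\ref{A:keyproj} via $\widetilde W(T)=\widetilde W(T^*)$; granting those, the theorem is a formal consequence of Theorems~\ref{I:bijectiongeneral} and \ref{T:one-sidedborel}.
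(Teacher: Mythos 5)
Your proposal is correct and follows the same route as the paper: reduce to triviality of the $\Pi$-action, use $\widetilde W(T)=\widetilde W(T^*)$ to verify the substitute for Assumption~\ref{A:keyproj}, run the argument of Theorem~\ref{I:bijectiongeneral}(c)(i) to obtain the two-sided bijection, upgrade $\widetilde W$ to a weak support datum, and then invoke Theorem~\ref{I:bijectiongeneral}(c)(iii) for the homeomorphism. Your explicit comparison of the two $\Phi$-bijections (from Theorem~\ref{T:one-sidedborel} on right ideals and part (a) on two-sided ideals) to deduce that thick right and thick two-sided ideals coincide is the natural justification of a step the paper leaves implicit, and it is carried out correctly.
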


We expect that there exists a wide class of finite-dimensional Hopf algebras $A$ satisfying the (fg) assumption
for which 
\begin{enumerate}
\item[(a)] the one-sided thick ideals of $\stmod(A)$ are in bijection with the specialization closed subsets of $\operatorname{Proj}(\operatorname{H}^{\bullet}(A,\kk))$ 
as in Theorem \ref{T:fdHopfalgebras}, while  
\item[(b)] the spectrum $\Spc (\stmod(A))$ is homeomorphic to $\Pi$-$\operatorname{Proj}(\operatorname{H}^{\bullet}(A,\kk))$ 
and the two-sided thick ideals of $\stmod(A)$ are in bijection with the specialization closed subsets of $\Pi$-$\operatorname{Proj}(\operatorname{H}^{\bullet}(A,\kk))$, where
$\Pi$ is the group of invertible objects of  $\stmod(A)$.  
\end{enumerate}
The small quantum Borel $u_{\zeta}({\mathfrak b})$ is very special in that the action of $\Pi$ on $\operatorname{Proj}(\operatorname{H}^{\bullet}(u_{\zeta}({\mathfrak b}),\kk))$
is trivial. This is not the case in general as illustrated in the next section with the Benson--Witherspoon Hopf algebras. 

\section{Benson--Witherspoon Hopf algebras} 
\label{BW}

In this section, we use the method of Section \ref{SS:classifying}, to give an explicit description of the Balmer spectra of the stable 
module categories of the Benson--Witherspoon Hopf algebras \cite{BW1} and a classification of their thick two-sided ideals.

\subsection{The smash coproduct of a group algebra and group coordinate ring} The Benson--Witherspoon Hopf algebras are 
the Hopf duals of smash products of a group algebra and a coordinate ring of a group. They were studied in \cite{BW1}.

In more detail, let $G$ and $H$ be finite groups, 
with $H$ acting on $G$ by group automorphisms. 
Let $\kk$ be a field of positive characteristic dividing the order of $G$.
Define $A$ as the Hopf algebra dual to the smash product $\kk[G]\# \kk H$, 
where $\kk [G]$ is the dual of the group algebra of $G$, and $\kk H$ is the group algebra of $H$. 
Denote by $p_g$ the dual basis element of $\kk [G]$ corresponding to $g \in G$. By definition, this smash product is $\kk [G] \otimes \kk H$ 
as a vector space, and multiplication is given by 
$$ (p_g \otimes x )(p_h \otimes y) = p_g (x_{(1)} . p_h) \otimes x_{(2)}y =p_g p_{x.h} \otimes xy= \delta_{g, x.h} p_g \otimes xy$$ 
for all $g \in G$ and $x,y \in H$. Now define $$A=\Hom_{\kk} (\kk[G] \# \kk H, \kk).$$
As an algebra, $A=\kk G \otimes \kk [H]$. The comultiplication in $A$ is given by 
$$\Delta( g \otimes p_x)= \sum_{y \in H} (g \otimes p_y) \otimes (y^{-1} . g \otimes p_{y^{-1} x}).$$ 
The counit and antipode are given by $$\epsilon (g \otimes p_x) = \delta_{x,1} \quad \mbox{and} \quad S(g \otimes p_x)= x^{-1}. (g^{-1}) \otimes p_{x^{-1}}.$$ 
Note that while $G$ is a subalgebra of $A$, via the map $g \mapsto g \otimes 1$, it is not a Hopf subalgebra, since 
$$\Delta_G (g)= g \otimes g$$ and 
$$\Delta_A( g \otimes 1)= \sum_{x \in H} \Delta_A (g \otimes p_x)= \sum_{x,y \in H} (g \otimes p_y) \otimes (y^{-1} .g \otimes p_{y^{-1} x}) \not = (g \otimes 1) \otimes (g \otimes 1).$$

An $A$-module is the same as an $H$-graded $\kk G$-module. We may write any $A$-module $M$ in the form $$M=\bigoplus_{x \in H} M_x \otimes \kk_x,$$ where the $M_x$ are $\kk G$-modules. The action of $\kk G$ is on the first tensor and $\kk [H]$ acts on the second. 

In \cite{BW1}, Benson and Witherspoon prove the following formula for the decomposition of a tensor product of $A$-modules:
$$(M \otimes \kk_x) \otimes (N \otimes \kk_y)= (M \otimes {}^{x}N) \otimes \kk_{xy}$$ on homogeneous components. 
Here and below for $M \in \Mod(\kk G)$ and $x \in H$, ${}^x M \in \Mod(\kk G)$ denotes the conjugate of $M$ by the action of $x \in H \to \Aut(G)$. 
On homogeneous components, the dual of a module is given by 
$$(M \otimes \kk_x)^*={}^{x} (M^*) \otimes \kk_{x^{-1}}.$$ 

\subsection{Support data for $\StMod (A)$ and $\StMod (\kk G)$}

By the definition of the smash product, we have an embedding of Hopf algebras $$\kk [G] \hookrightarrow \kk [G] \# \kk H.$$ Hence, when we dualize to the smash coproduct, we get a Hopf algebra surjection $$ \kk G \twoheadleftarrow A.$$ We will use the following notation: 
\begin{enumerate}
\item[(i)] The cohomology rings of $A$ and $\kk G$ will be denoted by
$$R_A= \Ext^\bullet_A (\kk, \kk) \quad \mbox{and} \quad R_G= \Ext^\bullet_G (\kk, \kk),$$
respectively.
\item[(ii)] Denote the spaces
$$X^A= \Proj R_A \quad \mbox{and} \quad X^G= \Proj R_G.$$
The collections of their specialization closed subsets and all subsets will be denoted respectively by
$$\mc{X}^A_{sp}, \; \mc{X}^A, \; \mc{X}^G_{sp}, \; \mc{X}^G.$$
\item[(iii)] We will use the support functions on $\StMod (A)$ and $\StMod (\kk G)$ from \cite{BIK1}, where the relevant ring $R$ is taken to be 
$R_A$ and $R_G$, respectively. They are defined using the localization and colocalization functors from Theorem \ref{T:localizationtriangles}.
They take values in the sets of all subsets of the spaces of homogeneous prime ideals of $R_A$ and $R_G$, respectively.
By the discussion after the proof of Lemma 10.1 in \cite{BIK1}, the second extension takes values in $\mc{X}^G$:
$$W_G(-): \StMod(\kk G) \to \mc{X}^G.$$ 
In other words, the irrelevant ideal of $R_G$ is not in the image $W_G(M)$ for any $M \in \StMod (\kk G)$. 
The first extension takes values in $\mc{X}^A$ by \thref{ra-rg-iso}(c)
$$\widetilde W_A (-): \StMod(A) \to \mc{X}^A.$$ 
These support maps extend the corresponding cohomological support functions from \eqref{coh-supp}.

\item[(iv)] The map $\Phi$ associated to the support $\widetilde W_A$ will be denoted
by $\widetilde{\Phi}_A$. It takes thick subcategories of $\StMod(A)$ to subsets of 
$\mc{X}^A$. 
\item[(v)] The functor $\Mod(\kk G) \to \Mod(A)$ defined on objects by $$M \mapsto M \otimes \kk_e$$ will be denoted by $\mc{F}$. 
\end{enumerate}

\bre{no-strong-tensor}
In \cite{BW1}, Benson and Witherspoon give an example of $A$-modules $M$ and $N$ such that $M \otimes N$ is projective and $N \otimes M$ 
is not projective. 
This shows that in general, the (strong) tensor product property does not hold for the cohomological support $\widetilde  W_A$. In other words, it is not necessarily 
true that $\widetilde W_A(M \otimes N)= \widetilde W_A(M) \cap \widetilde W_A(N)$. Furthermore, there are no support data maps in the sense 
of Balmer $\sigma : \StMod (A) \to  \mc{X}$ such that $\sigma(A)=\varnothing \Leftrightarrow A \cong 0$. (If there were such maps, then 
$\sigma (M \otimes N) = \sigma(M) \cap \sigma (N) = \sigma(N \otimes M)$, so $M \otimes N \cong 0 \Leftrightarrow N \otimes M \cong 0$.) 
\ere

\ble{f-monoid-exact}
The functor $\mc{F}$ descends to a fully faithful tensor triangulated functor $\For : \StMod(\kk G) \to \StMod(A)$.
\ele

\begin{proof}
By the formula for the tensor product of $A$-modules, $\mc{F}$ is monoidal, since $$\mc{F}(M \otimes N) \cong \mc{F}(M) \otimes \mc{F}(N).$$ It is clear that $\mc{F}$ is exact, and it is fully faithful since morphisms of $A$-modules are the same as graded morphisms of $\kk G$-modules. The functor $\mc{F}$ descends to a functor $\StMod(\kk G) \to \StMod(A)$ because it sends projective modules to projective modules, and has the property that for each morphism $f$ in $\Mod(\kk G)$, if $\mc{F}(f)$ factors through a projective module in $\Mod(A)$, then $f$ factors through a projective module in $\Mod(\kk G)$.
\end{proof}

Denote by $\For : \Mod(A) \to \Mod(\kk G)$ the forgetful functor. 
It is clear that it descends to a tensor triangulated functor $\StMod(\kk G) \to \StMod(A)$.

\bth{ra-rg-iso} For all  Benson--Witherspoon Hopf algebras $A$ the following hold:
\begin{enumerate}
\item[(a)]There is a canonical isomorphism $R_G \cong R_A$. (Denote $R:=R_G \cong R_A$).
\item[(b)] If $C$ and $Q$ are $\kk G$-modules, there is an isomorphism of $R$-modules $$\Ext_G^\bullet(C,Q) \cong \Ext_A^\bullet (\mc{F}(C), \mc{F}(Q)),$$ 
and $A$ satisfies the (fg) Assumption \ref{A:fg}.
\item[(c)] For an $A$-module $N$,
$$\widetilde W_A(N) = W_G(\For(N)).$$
\item[(d)] For an $A$-module $Q$, 
$$
\widetilde \Phi_A( \langle Q \rangle)= H \cdot W_G(\For(Q)).
$$
\end{enumerate}
\eth

\begin{proof}
For (a) and (b), suppose $$0 \to \mc{F}(Q) \to N_1 \to... \to N_i \to \mc{F}(C) \to 0$$ is an exact sequence representing an element of $\Ext_A^\bullet(\mc{F}(C),\mc{F}(Q))$. Then we claim it is equivalent to an exact sequence which is supported only at the identity component. To do this, we may just note that the natural maps give an equivalence of extensions:
\begin{center}
\begin{tikzcd}
0 \arrow[r] & \mc{F}(Q) \arrow[d] \arrow[r] & N_1 \arrow[d] \arrow[r] & ... \arrow[r] & N_i \arrow[d] \arrow[r] & \mc{F}(C) \arrow[d] \arrow[r] & 0 \\
0 \arrow[r] & \mc{F}(Q) \arrow[r]           & (N_1)_e \arrow[r]       & ... \arrow[r] & (N_i)_e \arrow[r]       & \mc{F}(C) \arrow[r]           & 0
\end{tikzcd}
\end{center}
This gives a vector space isomorphism $$\Ext_A^\bullet(\mc{F}(Q), \mc{F}(C)) \cong \Ext_G^\bullet(Q,C).$$ This isomorphism is compatible with the actions of $R_A$ and $R_G$ because $\mc{F}$ is a monoidal functor. This decomposition allows us to conclude Assumption \ref{A:fg} for $A$, since it is well-known that this assumption holds for $\kk G$.

For (c), write $N= \bigoplus_{z \in H} N_z \otimes \kk_z$ with $N_z \in \modd (\kk G)$. Note that by \cite[Theorem 5.2]{BIK1}, 
$$\widetilde W_A(N)= \bigcup_{C \text{ compact}} \min \Hom_{\StMod(A)}^\bullet (C,N),$$
where, for an $R_A$-module $L$, $\min L$ refers to the minimal primes in the support of $L$. Hence, 
\begin{align*}
\widetilde W_A(N)&= \bigcup_{C \in \modd(\kk G), z \in H} \min \Hom_{\StMod (A)}^\bullet (C \otimes \kk_z, N)\\
&= \bigcup_{C,z} \min \Hom^\bullet_{\StMod(A)} (C \otimes \kk_z, N_z \otimes \kk_z)\\
&= \bigcup_{C,z} \min \Hom^\bullet_{\StMod(A)} (C \otimes \kk_e, (N_z \otimes \kk_z) \otimes (\kk \otimes \kk_z)^*)\\
&= \bigcup_{C,z} \min \Hom^\bullet_{\StMod(A)} (C \otimes \kk_e, N_z \otimes \kk_e)\\
&= \bigcup_{C,z} \min\Hom^\bullet_{\StMod(\kk G)} (C,N_z)
= \bigcup_z W_G (N_z).
\end{align*}
The second to last equality follows from the fact that for $i>0,$ $$\Hom^i_{\StMod(A)} ( C \otimes \kk_e, N_z \otimes \kk_e) \cong \Ext^i_{A} (C \otimes \kk_e , N_z \otimes \kk_e)$$ by \cite[Proposition 2.6.2]{CTVZ1}, which is isomorphic to $\Ext^i_{G} (C , N_z)$ by (2). Additionally, for $i=0$ we have $$\Hom_{\StMod(A)} (C \otimes \kk_e, N_z \otimes \kk_e) \cong \Hom_{\StMod(\kk G)} (C, N_z)$$ since the functor $\mc{F}$ is fully faithful.

For (d), we have
\begin{align*}
\widetilde \Phi_A( \langle Q \rangle)&=\bigcup_{M, N,x,y,z} \widetilde W_A ( (M_x \otimes \kk_x) \otimes (Q_z \otimes \kk_z) \otimes (N_y \otimes \kk_y))\\
&= \bigcup_{M, N, x, y , z} \widetilde W_A (( M_x \otimes ^x Q_z \otimes ^{xz} N_y ) \otimes \kk_{xzy})\\
&= \bigcup_{M, N, x, y, z} W_G (  M_x \otimes ^x Q_z \otimes ^{xz} N_y)\\
&= \bigcup_{M, N, x, y, z} \left ( W_G (  M_x ) \cap W_G ( ^{x } Q_z) \cap  W_G ( ^{xz} N_y) \right )\\
&= \bigcup_{x,z} x. W_G(Q_z)
= H \cdot (W_G (\For(Q))).
\end{align*}

\end{proof}

\bco{int-h-invariance}
For all Benson--Witherspoon Hopf algebras $A$, 
\[
\widetilde W_A : \StMod(A) \to \mc{X}^A
\]
is an extended weak support datum on $\StMod(A)$ satisfying the faithfulness condition \eqref{EE:supportone}.
\eco

\begin{proof} The fact that $\widetilde W_A$ satisfies conditions (a)--(d) in Definition \ref{dnc-w-support} follows from Theorem \ref{tra-rg-iso}(c) 
and the fact that $W_G$ is a support datum for $\StMod(\kk G)$.  For condition (e) in Definition \ref{dnc-w-support}, 
we need to verify the property $$\widetilde \Phi_A ( \langle M \rangle \otimes \langle N \rangle ) = \widetilde \Phi_A( \langle M \rangle) \cap \widetilde \Phi_A ( \langle N \rangle).$$
This follows as both sides are equal to 
$$
[ H \cdot W_G( \For (M) ) ] \cap [ H \cdot W_G( \For (N) ) ] 
$$
by Theorem \ref{tra-rg-iso}(d). 

To check the faithfulness of $\widetilde W_A$, assume that $M = \oplus_{x \in H} M_x \otimes \kk_x \in \Mod(A)$ is such that $\widetilde \Phi_A( \langle M \rangle)= \varnothing$. 
Applying Theorem \ref{tra-rg-iso}(d), gives that $H \cdot W_G(\For(M)) = \varnothing$. By the  faithfulness of $\widetilde W_A$, $\For(M) = \oplus_{x \in H} M_x$ 
is a projective $\kk G$-module, and thus, $M_x$ are projective $\kk G$-modules for all $x \in H$. This implies that $M$ is a projective $A$-module.
\end{proof}

\subsection{Classification of thick two-sided ideals and the Balmer spectrum of $\stmod(A)$}

In order to explicitly describe the Balmer spectrum of $\stmod(A)$, we must produce a weak 
support datum having the faithfulness and realization properties \eqref{EE:supportone}--\eqref{EE:supporttwo}. 
By \cite[Theorem 5.2]{BIK1}, the weak support datum $\widetilde{W}_A : \StMod(A) \to \mc{X}^A= \mc{X} (\Proj (R_A))$
has the faithfulness property \eqref{EE:supportone}.
However, to get the realization property (\ref{EE:supporttwo}), we will need to consider a new support datum built from $\widetilde W_A$. 
Denote
$$X_H=H\text{-}\Proj(R_A),$$
the space of nonzero homogeneous $H$-prime ideals of $A$ in the sense of Lorenz \cite{Lorenz1},
i.e. nonzero $H$-invariant homogeneous ideals $P$ of $R_A$ that have the property $I J \subseteq P \Rightarrow I \subseteq P$ or $J \subseteq P$ for 
all $H$-invariant homogeneous ideals $I,J$ of $R_A$. 
$X_H$ is a Zariski space by the argument in \cite[Section 2.3]{BKN1}.
The topological space of $H$-orbits in $X^A = \Proj(R_A)$ will be denoted by
$$\widetilde X_H=H\backslash \Proj(R_A).$$ By \cite[Section 1.3]{Lorenz1}, there are maps
\begin{center}
\begin{tikzcd}
                                      &  &                                               & \mf{p} \arrow[lllddd, maps to] \arrow[rrrddd, maps to]          &     &  &                                 \\
                                      &  &                                               & X^A \arrow[ld, "\pi_1", two heads] \arrow[rd, "\pi", two heads] &     &  &                                 \\
                                      &  & \widetilde X_H \arrow[rr, "\pi_2", two heads] &                                                                 & X_H &  &                                 \\
H\cdot \mf{p} \arrow[rrrrrr, maps to] &  &                                               &                                                                 &     &  & \bigcap_{h \in H} h\cdot \mf{p}
\end{tikzcd}
\end{center}
and the topologies on $\widetilde X_H$ and $X_H$ are defined to be the final topologies with respect to the surjections from $X_A$.

Denote 
$$
W_A =\pi \circ \widetilde W_A : \StMod(A) \to \mc{X}(X_H).
$$ 
Denote by $\Phi_A$ the associated map $\Phi_{W_A}$ map given by \eqref{Phi}.

\ble{wa-extra-cond} For all Benson--Witherspoon Hopf algebras $A$, $W_A$ is a weak support datum satisfying the faithfulness and realization 
conditions \eqref{EE:supportone}--\eqref{EE:supporttwo}.
\ele

\begin{proof} Since $\widetilde{W}_A$ is a weak support datum satisfying the faithfulness property, the same is true for 
$W_A=\pi \circ \widetilde{W}_A$.

Because $X_H$ is equipped with the final topology with respect to $\pi$, and the preimage of $W_A(Q)=\pi (\widetilde W_A(Q))$ is $\widetilde W_A(Q)$, 
which is closed, we have that $W_A(Q)$ is closed in $X_H$. 

Let $Y \subseteq X_H$ be closed. Then $\pi^{-1}(Y)$ is a closed $H$-stable subset of $X_A$. This implies there is some $Q$ with $$W_G(Q)=\pi^{-1}(Y).$$
Since $\pi^{-1}(Y)$ is $H$-stable, using \thref{ra-rg-iso}, we may check
\begin{align*}
\Phi_A( \langle \mc{F}(Q) \rangle)&= \pi \circ \widetilde \Phi_A( \langle \mc{F} (Q) \rangle )= \pi ( H \cdot W_G(Q))\\
&= \pi (H \cdot \pi^{-1}(Y)) = \pi( \pi^{-1}(Y)) =Y.
\end{align*}
Hence, $W_A(-)$ also satisfies the realizability property.
\end{proof}

Applying Theorem \ref{I:bijectiongeneral} we obtain:

\bth{smash-coprod-balmer-spc}
Let $A= \Hom_{\kk}(\kk[G] \# \kk H, \kk)$ where $G$ and $H$ are finite groups with $H$ acting on $G$ and $\kk$ is a base field of positive characteristic dividing
the order of $G$. Let $R_A$ be the cohomology ring of $A$, i.e. $R_A=\Ext^\bullet_A(\kk, \kk)$. The following hold:
\begin{enumerate} 
\item[(a)] There exists a bijection 
$$
\{\text{thick two-sided ideals of $\stmod(A)$}\} \begin{array}{c} {\Phi_A} \atop {\longrightarrow} \\ {\longleftarrow}\atop{\Theta_A} \end{array}  
\{\text{specialization closed sets of $H$-$\Proj(R_A)$} \},
$$
where $\Theta_A$ is the map given by \eqref{Theta} for the weak support datum $W_A$. 
\item[(b)] There exists a homeomorphism $f: H$-$\Proj(R_A) \to \Spc(\stmod(A))$. 
\end{enumerate} 
\eth

\end{document}